\documentclass[12pt]{article}

\usepackage{amsmath, amssymb, amsthm}
\usepackage{graphicx}
\graphicspath{{figures/}{./}}
\usepackage{booktabs}
\usepackage{hyperref}
\usepackage{geometry}
\usepackage{caption}
\usepackage{subcaption}
\usepackage{bm}
\usepackage{float}
\usepackage{doi}
\usepackage{placeins}
\allowdisplaybreaks
\hypersetup{
    pdfencoding=auto,
    psdextra=true,
    unicode=true,
    hidelinks
}

\newtheorem{theorem}{Theorem}
\newtheorem{proposition}{Proposition}
\newtheorem{corollary}{Corollary}
\newtheorem{definition}{Definition}
\newtheorem{lemma}{Lemma}
\newtheorem{problem}{Problem}

\newtheorem{remark}{Remark}

\title{Fixed Points, Stability, Basin Geometry, and Global Convergence of the
\(3\times3\) Correlation Map}

\author{Ishrak Alhajj Hassan\\
\small Department of Mathematics, Faculty of Science, University of Ostrava,\\
\small Ostrava, Czech Republic\\
\small Email:
\href{mailto:ishrak.alhajj.s01@osu.cz}
{\texttt{ishrak.alhajj.s01@osu.cz}}\\
\small ORCID:
\href{https://orcid.org/0009-0009-5411-9799}
{0009-0009-5411-9799}}

\date{}

\begin{document}

\maketitle

\begin{abstract}
For more than half a century, iterated Pearson correlation has underpinned
methods for network blockmodeling, clustering, seriation, and information
visualization. Despite its continued use and the longstanding assumption of
convergence, the global convergence problem remained open even in dimension
three. We resolve this problem completely for the \(3\times3\) correlation
map and obtain complete fixed-point and relative Lyapunov stability
classifications.
We first establish an exact row-wise Gram factorization of the correlation
map and the identity
\[
\operatorname{rank}C(A)=\operatorname{rank}(AH_n),
\qquad
H_n=I_n-\frac1n\mathbf1\mathbf1^T,
\]
where \(I_n\) is the \(n\times n\) identity matrix and \(H_n\) is the
centering matrix. In arbitrary dimension, this yields the precise rank-reduction mechanism,
forces every fixed point to be singular, and establishes a forward-invariance
theorem for the Pearson-nondegenerate elliptope. Here
\(\mathcal C_n\) denotes the elliptope of \(n\times n\) correlation
matrices. More precisely, the all-ones matrix is the unique
Pearson-degenerate correlation matrix and
\[
C\bigl(\mathcal C_n^{\mathrm{nd}}\bigr)
\subseteq
\mathcal C_n^{\mathrm{nd}},
\qquad
\mathcal C_n^{\mathrm{nd}}
=
\mathcal C_n\setminus\{\mathbf1\mathbf1^T\}.
\]
Thus every trajectory starting in the natural nondegenerate state space is
defined for all forward iterates. We also characterize all nondegenerate
sign-valued fixed points in arbitrary dimension and recover their exact
number, \(2^{n-1}-1\).

For \(n=3\), we derive an explicit coordinate representation and prove that
the natural nondegenerate domain contains exactly seven fixed points: three
rank-one patterned points, three rank-two mixed points, and one rank-two
equicorrelation point. Exact Jacobian calculations together with admissible
invariant boundary curves give the complete relative Lyapunov stability
classification: precisely the patterned points are locally asymptotically
stable.

The principal new global result is that every admissible \(3\times3\) orbit
converges to one of these seven fixed points. After one iteration, the exact
rank identity confines the dynamics to rank at most two. For a rank-two
iterate \(P=XX^T\), we use the one-dimensional kernel
\(\ker P=\ker X^T\) as a dynamical coordinate. If
\(q\ne0\) spans this kernel and
\(D=\operatorname{diag}(d_1,d_2,d_3)\) contains the centered-row norms, then
the kernel of the next rank-two iterate is spanned by \(Dq\). An exact
identity,
\[
d_i^2-d_j^2
=
\frac{2\det(X^TH_3X)}{q_1+q_2+q_3}(q_i-q_j),
\]
whenever \(q_1+q_2+q_3\ne0\), yields monotone projective ratios and exhausts
all sign and boundary cases. This proves global convergence and excludes
nontrivial periodic or recurrent limit sets in dimension three.

Finally, the exceptional initial conditions converging to the four unstable
fixed points form a Lebesgue-null set. Hence almost every admissible initial
condition converges to one of the three patterned fixed points. Their basins
are relatively open, permutation-equivalent, and have equal Lebesgue measure;
consequently each patterned basin has exactly one third of the elliptope's
Lebesgue measure. Reproducible basin computations are retained as an
independent numerical illustration of the exact global theory.
\end{abstract}

\noindent\textbf{Keywords:}
Pearson correlation; correlation matrices; iterated correlation maps;
fixed-point classification; Lyapunov stability; global convergence; basins of attraction; elliptope.

\medskip
\noindent\textbf{2020 Mathematics Subject Classification.}\\
Primary 37C25; Secondary 37C75, 62H20.

\section{Introduction}\label{sec:introduction}

For a real \(n\times n\) matrix \(A\) with rows
\(\mathbf a_1,\ldots,\mathbf a_n\in\mathbb R^n\), the correlation map
\(C(A)\) is defined as the matrix whose \((i,j)\)-entry is the Pearson
correlation coefficient between rows \(\mathbf a_i\) and \(\mathbf a_j\).
Whenever all rows of \(A\) are nonconstant, \(C(A)\) is symmetric and
positive semidefinite with unit diagonal and therefore belongs to the
elliptope \(\mathcal C_n\). Repeated application,
\[
A_{k+1}=C(A_k),
\]
defines a nonlinear discrete dynamical system once the natural
nondegenerate state space is identified.

The iteration originated in multivariate data analysis and underlies the
classical CONCOR clustering procedure of Breiger, Boorman, and Arabie
\cite{breiger1975algorithm} and the generalized association-plot framework of
Chen~\cite{chen2002generalized}. These methods exploit the strong empirical
tendency of repeated correlation to produce structured matrices with entries
in \(\{-1,1\}\). Chen established a dimension-independent rank-reduction
property and displayed stationary configurations in low dimensions. Kruskal
proved a quantitative local convergence criterion near a prescribed two-class
sign pattern~\cite{kruskalCONCOR}. Extensive numerical evidence has also
supported rapid convergence across a broad range of dimensions
\cite{alhajj2025empirical}. Complementary finite-step probabilistic bounds for
contraction ratios were established in \cite{alhajjhassan2026finite}.

The main analytical difficulty is global. The Pearson map is nonlinear and is
undefined when a row becomes constant. Fixed points lie on singular boundary
strata of the elliptope, where ambient linearization alone need not determine
relative stability. Even a complete fixed-point classification does not by
itself exclude periodic orbits, recurrent boundary dynamics, or other
nontrivial omega-limit sets.

The first part of this paper develops the structural framework needed to
separate these issues. Building on Chen's rank-reduction mechanism
\cite[Lemma~3.1]{chen2002generalized}, we establish the exact row-wise Gram
factorization
\[
C(A)=\Delta^{-1}AH_nA^T\Delta^{-1},
\]
where \(I_n\) is the \(n\times n\) identity matrix,
\[
H_n=I_n-\frac1n\mathbf1\mathbf1^T
\]
is the centering matrix, and \(\Delta\) is the diagonal matrix whose entries
are the Euclidean norms of the centered rows of \(A\). Hence
\[
\operatorname{rank}C(A)=\operatorname{rank}(AH_n).
\]
We then identify the all-ones matrix as the unique Pearson-degenerate point
of the elliptope and prove that the Pearson-nondegenerate elliptope is forward
invariant. Writing
\[
\mathcal C_n^{\mathrm{nd}}
=
\mathcal C_n\cap\mathcal D_n
\]
for the correlation matrices with nonconstant rows, the restriction of
\(C\) to \(\mathcal C_n^{\mathrm{nd}}\) is therefore a genuine self-map,
and finite-time loss of nondegeneracy is excluded in every dimension.

Before specializing to dimension three, we also characterize all
nondegenerate sign-valued fixed points. Every sign-valued correlation matrix
has the form \(ss^T\) with \(s\in\{-1,1\}^n\); excluding the constant sign
class yields exactly
\[
2^{n-1}-1
\]
nondegenerate sign-valued fixed matrices.

For \(n=3\), Chen~\cite[Figure~5]{chen2002generalized} displayed the three
structural types of stationary configurations. Theorem~\ref{thm:complete-classification}
gives a self-contained exhaustive analytical classification on the natural
nondegenerate domain. There are exactly seven fixed points: three rank-one
patterned points, three rank-two mixed points, and one rank-two
equicorrelation point, forming three permutation orbits under \(S_3\),
the symmetric group on three symbols.

Theorem~\ref{thm:stability} then gives the complete relative Lyapunov
stability classification. The patterned points are locally asymptotically
stable. The mixed and equicorrelation points are Lyapunov unstable relative
to the elliptope. Their instability is proved not merely from ambient
expanding eigenvalues, but by invariant boundary curves that realize the
multipliers
\[
\frac{4\sqrt3}{3}
\qquad\text{and}\qquad
\frac32
\]
inside the admissible state space.

The central result of the paper is the global convergence theorem,
Theorem~\ref{thm:global-convergence}. The exact rank identity implies that
after one iteration every \(3\times3\) orbit has rank at most two. The
remaining rank-two dynamics is represented by the projective kernel direction
of the current correlation matrix. If \(P=XX^T\) has rank two and
\(q\in\ker P\setminus\{0\}\), then the next kernel is obtained by positive
diagonal rescaling. A second exact identity orders the diagonal rescaling
factors in exactly the same way as the coordinates of \(q\). This produces
monotone projective ratios. A complete analysis of the two possible sign
chambers and their boundary cases proves that every admissible orbit converges
to one of the seven fixed points.

The same analysis confines convergence to the four non-patterned fixed
points to an exceptional set. Combined with real analyticity of the
coordinate map in the positive-definite interior, it implies that the union
of the three patterned basins has full Lebesgue measure. The basins are relatively open and permutation-equivalent, so each
has exactly one third of the elliptope's Lebesgue measure. Thus the nearly
equal frequencies seen in the numerical basin experiment are the finite-sample
counterpart of an exact measure-theoretic statement rather than evidence for
an unresolved conjecture.

The numerical experiment is retained for reproducibility and geometric
illustration. It treats all seven fixed points uniformly and independently
confirms that each of the \(49{,}164\) admissible sampled trajectories is
classified as converging to one of the three patterned fixed points. The exact
global theorem, however, is entirely analytical and does not depend on the
Monte Carlo computation.

Taken together, the paper establishes forward well-posedness on the natural
state space in arbitrary dimension, a complete fixed-point and relative
stability theory in dimension three, and a global convergence theorem for
every admissible \(3\times3\) orbit. The higher-dimensional convergence
problem remains open.

\section{Preliminaries}\label{sec:preliminaries}

Let \(M_n(\mathbb R)\) denote the space of real \(n\times n\)
matrices. For each positive integer \(m\), let \(I_m\) denote the
\(m\times m\) identity matrix and let
\(\mathbf1_m\in\mathbb R^m\) denote the all-ones vector. We omit the
subscript and write \(\mathbf1\) whenever its dimension is determined
by the surrounding formula. We write
\[
\mathbf1_m^\perp
=
\{x\in\mathbb R^m:x^T\mathbf1_m=0\}
\]
for the orthogonal complement of \(\mathbf1_m\).
We use \(\langle\cdot,\cdot\rangle\) and \(\|\cdot\|\) for the Euclidean
inner product and norm, respectively. For a matrix \(M\), the notation
\(M_{i,:}\) denotes its \(i\)-th row. For a symmetric matrix \(A\),
the notations \(A\succeq0\) and \(A\succ0\) mean that \(A\) is
positive semidefinite and positive definite, respectively. We use
\(\ker\), \(\operatorname{rank}\), \(\operatorname{tr}\), \(\det\),
\(\operatorname{diag}\), and \(\operatorname{span}\) with their standard
linear-algebraic meanings. For any self-map \(F\), we write
\(F^0=\operatorname{id}\) and \(F^{k+1}=F\circ F^k\) for its iterates.
\begin{definition}
For a vector \(x=(x_1,\ldots,x_m)\in\mathbb{R}^m\), define its mean by
\[
\bar{x}=\frac{1}{m}\sum_{k=1}^{m}x_k.
\]
For two nonconstant vectors \(x,y\in\mathbb{R}^m\), the Pearson
correlation coefficient is
\[
\operatorname{corr}(x,y)
=
\frac{
\sum_{k=1}^{m}(x_k-\bar{x})(y_k-\bar{y})
}{
\sqrt{
\sum_{k=1}^{m}(x_k-\bar{x})^2
\sum_{k=1}^{m}(y_k-\bar{y})^2
}
}.
\]
\end{definition}

\begin{definition}
The set of real \(n\times n\) correlation matrices is
\[
\mathcal C_n
=
\left\{
A\in M_n(\mathbb{R}):
A=A^T,\;
A\succeq0,\;
A_{ii}=1\text{ for }i=1,\ldots,n
\right\}.
\]
It is a compact convex subset of the cone of positive semidefinite
matrices and plays a central role in matrix analysis and numerical
optimization; see \cite{horn2013matrix,higham2002}.
\end{definition}

\begin{definition}
Let
\[
\mathcal D_n
=
\left\{
A\in M_n(\mathbb{R}):
\text{all rows of }A\text{ are nonconstant}
\right\}.
\]
For \(A\in\mathcal D_n\), write its rows as
\(r_1,\ldots,r_n\). The correlation map is the operator
\[
C:\mathcal D_n\longrightarrow M_n(\mathbb R)
\]
defined by
\[
[C(A)]_{ij}
=
\operatorname{corr}(r_i,r_j),
\qquad
1\le i,j\le n.
\]

A matrix \(A\in\mathcal D_n\) is called a \emph{fixed point} of \(C\)
if
\[
C(A)=A.
\]
\end{definition}

\begin{proposition}\label{prop:corrmatrix}
For every matrix \(A\in\mathcal D_n\),
\[
C(A)\in\mathcal C_n.
\]
\end{proposition}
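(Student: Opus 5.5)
The plan is to write \(C(A)\) as a normalized Gram matrix of centered rows and read off the three defining properties of \(\mathcal C_n\). Let \(r_1,\ldots,r_n\) be the rows of \(A\). Define the centered rows
\[
u_i=r_i-\bar r_i\mathbf1=H_n r_i,
\qquad H_n=I_n-\tfrac1n\mathbf1\mathbf1^T .
\]
Since \(A\in\mathcal D_n\), every \(r_i\) is nonconstant, so \(u_i\neq0\). Put \(\delta_i=\|u_i\|>0\) and \(v_i=u_i/\delta_i\). By the definition of the Pearson coefficient, the numerator of \(\operatorname{corr}(r_i,r_j)\) is \(\langle u_i,u_j\rangle\) and the denominator is \(\delta_i\delta_j\). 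Therefore
\[
[C(A)]_{ij}=\langle v_i,v_j\rangle .
\]

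In matrix form, let \(V\) be the matrix with rows \(v_i^T\). Then \(C(A)=VV^T\). Writing \(\Delta=\operatorname{diag}(\delta_1,\ldots,\delta_n)\), we have \(V=\Delta^{-1}AH_n\). Because \(H_n\) is symmetric and idempotent, this gives \(C(A)=\Delta^{-1}AH_nA^T\Delta^{-1}\).

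The three properties now follow directly. Symmetry holds because \(\langle v_i,v_j\rangle=\langle v_j,v_i\rangle\). The diagonal entries are \(\|v_i\|^2=1\). Positive semidefiniteness holds because, for any \(x\in\mathbb R^n\),
\[
x^TC(A)x=\|V^Tx\|^2=\Bigl\|\sum_i x_iv_i\Bigr\|^2\ge0 .
\]
Hence \(C(A)\in\mathcal C_n\).

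There is no real obstacle. The only point needing care is that the normalization is well defined, and this is exactly where the hypothesis \(A\in\mathcal D_n\) enters: each \(\delta_i\) is strictly positive. The explicit factorization through \(H_n\) is not needed for this statement. It is worth recording now, however, since the later rank identity \(\operatorname{rank}C(A)=\operatorname{rank}(AH_n)\) follows from it.
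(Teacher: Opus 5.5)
Your proposal is correct and follows essentially the same route as the paper: it writes \(C(A)=UU^T\) as the Gram matrix of the normalized centered rows and reads off symmetry, unit diagonal, and positive semidefiniteness. The factorization \(C(A)=\Delta^{-1}AH_nA^T\Delta^{-1}\) that you record is the one the paper establishes separately in its next proposition (Proposition~\ref{prop:rank-bound}).
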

\begin{proof}
Let \(r_i\) be the \(i\)-th row of \(A\), and set
\begin{equation}\label{eq:normalized-centered-row}
u_i
=
\frac{r_i-\bar r_i\mathbf1}
{\|r_i-\bar r_i\mathbf1\|},
\qquad
i=1,\ldots,n.
\end{equation}
Since \(A\in\mathcal D_n\), the denominator is nonzero for every
\(i\). Then
\[
[C(A)]_{ij}
=
\langle u_i,u_j\rangle.
\]
Hence
\begin{equation}\label{eq:correlation-gram-form}
C(A)=UU^T,
\end{equation}
where the rows of \(U\) are \(u_1,\ldots,u_n\). Therefore \(C(A)\)
is symmetric, positive semidefinite, and has diagonal entries equal to
one.
\end{proof}

The dimension-independent rank-reduction property of the correlation
map was established by Chen~\cite[Lemma~3.1]{chen2002generalized},
using standard rank inequalities and Sylvester's law of nullity to
control the sequence of matrix ranks. In the row-wise framework of the
present paper, we establish an exact row-wise Gram factorization of the correlation
map. After centering by \(H_n\) and normalizing by the invertible
diagonal matrix \(\Delta^{-1}\), this factorization yields the exact
rank identity and connects rank reduction directly to the geometry of
the fixed-point set.

\begin{proposition}[Exact row-wise factorization and rank reduction]
\label{prop:rank-bound}
Let \(A\in\mathcal D_n\), and let
\[
H_n
=
I_n-\frac1n\mathbf1\mathbf1^T
\]
be the centering matrix, equivalently the orthogonal projector onto
\(\mathbf1_n^\perp\). Define
\[
\Delta
=
\operatorname{diag}
\bigl(
\|(AH_n)_{1,:}\|,\ldots,\|(AH_n)_{n,:}\|
\bigr).
\]
Then \(\Delta\) is invertible and
\begin{equation}\label{eq:correlation-centered-factorization}
C(A)
=
\Delta^{-1}AH_nA^T\Delta^{-1}.
\end{equation}
In particular,
\begin{equation}\label{eq:rank-correlation-centered}
\operatorname{rank}C(A)
=
\operatorname{rank}(AH_n).
\end{equation}
Consequently,
\begin{equation}\label{eq:rank-reduction-bounds}
\operatorname{rank}A-1
\le
\operatorname{rank}C(A)
\le
\min\{\operatorname{rank}A,n-1\}.
\end{equation}
Every fixed point of \(C\) is therefore singular and lies on the
relative boundary of \(\mathcal C_n\).
\end{proposition}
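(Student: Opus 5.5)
The plan is to read everything off the Gram representation already obtained in the proof of Proposition~\ref{prop:corrmatrix}. First, the $i$-th row of $AH_n$ is $r_iH_n = r_i-\bar r_i\mathbf1^T$, because $H_n$ is symmetric and $r_i\mathbf1\mathbf1^T/n=\bar r_i\mathbf1^T$. Hence the diagonal entries of $\Delta$ are exactly the centered-row norms $\|r_i-\bar r_i\mathbf1\|$. These are nonzero since $A\in\mathcal D_n$, so $\Delta$ is invertible, and $U=\Delta^{-1}AH_n$ is the matrix of normalized centered rows $u_i$ from \eqref{eq:normalized-centered-row}. Using \eqref{eq:correlation-gram-form} together with $H_n^T=H_n=H_n^2$, we get
\[
C(A)=UU^T=\Delta^{-1}AH_nH_n^TA^T\Delta^{-1}=\Delta^{-1}AH_nA^T\Delta^{-1},
\]
which is \eqref{eq:correlation-centered-factorization}.

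For the rank identity \eqref{eq:rank-correlation-centered}, I would use that multiplying by the invertible diagonal $\Delta^{-1}$ on both sides preserves rank. This gives $\operatorname{rank}C(A)=\operatorname{rank}(AH_nA^T)=\operatorname{rank}\bigl((AH_n)(AH_n)^T\bigr)$. Since $MM^T$ and $M$ always have the same rank over $\mathbb R$ (because $MM^Tx=0$ implies $\|M^Tx\|^2=0$), this equals $\operatorname{rank}(AH_n)$.

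For the bounds \eqref{eq:rank-reduction-bounds}:
\begin{itemize}
\item The upper bound follows from $\operatorname{rank}(AH_n)\le\min\{\operatorname{rank}A,\operatorname{rank}H_n\}$ and $\operatorname{rank}H_n=n-1$.
\item For the lower bound, write $A=AH_n+\frac1nA\mathbf1\mathbf1^T$. The second term has rank at most one, so subadditivity of rank gives $\operatorname{rank}A\le\operatorname{rank}(AH_n)+1$. (Sylvester's inequality gives the same conclusion.)
\end{itemize}

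Finally, if $C(A)=A$, then $\operatorname{rank}A=\operatorname{rank}C(A)\le n-1$, so $A$ is singular. By Proposition~\ref{prop:corrmatrix}, $A\in\mathcal C_n$. A singular positive semidefinite matrix has a zero eigenvalue, so it is not positive definite. Points of $\mathcal C_n$ that are not positive definite lie on its relative boundary: the relative interior within the affine space of symmetric unit-diagonal matrices is the positive definite set, and any $A$ with $v^TAv=0$, $v\ne0$, is approached by $A-\varepsilon(vv^T-\operatorname{diag}(vv^T))$-type perturbations leaving $\mathcal C_n$. I will treat this as the standard description of the elliptope's relative interior rather than reprove it.

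None of the steps is a serious obstacle. The only point requiring care is the last identification of singular elements with the relative boundary. I would cite the standard fact that the relative interior of $\mathcal C_n$ consists of the positive definite correlation matrices (e.g.\ \cite{horn2013matrix}). If a self-contained argument is preferred, I would instead note that a singular $A$ supports the linear functional $X\mapsto v^TXv$, which vanishes at $A$ and is nonnegative on $\mathcal C_n$ but not identically zero there, since $v^TI_nv>0$. Hence $A$ is in a proper face, and therefore on the relative boundary.
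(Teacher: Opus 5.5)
Your proposal is correct and follows the paper's proof essentially step for step: the Gram factorization $C(A)=UU^T$ with $U=\Delta^{-1}AH_n$, rank invariance of $MM^T$, the bound from $\operatorname{rank}H_n=n-1$, and the identification of the relative interior of $\mathcal C_n$ with the positive-definite correlation matrices. The differences are cosmetic: you get the lower bound from subadditivity applied to $A=AH_n+\frac1nA\mathbf1\mathbf1^T$ rather than from Sylvester's inequality, and you sketch a supporting-functional argument for the relative-boundary claim, which the paper states without proof.
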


\begin{proof}
Set
\[
B=AH_n.
\]
The \(i\)-th row of \(B\) is the centered \(i\)-th row of \(A\).
Since \(A\in\mathcal D_n\), none of these rows vanishes, and therefore
the matrix \(\Delta\) defined in the statement is invertible. The
matrix of normalized centered rows is
\[
U=\Delta^{-1}B=\Delta^{-1}AH_n.
\]
Since \(C(A)=UU^T\), using \(H_n^T=H_n\) and \(H_n^2=H_n\) gives
equation~\eqref{eq:correlation-centered-factorization}.

For every real matrix \(U\),
\[
\operatorname{rank}(UU^T)=\operatorname{rank}U.
\]
Since multiplication by the invertible matrix \(\Delta^{-1}\) does not
change rank, equation~\eqref{eq:correlation-centered-factorization}
gives
\[
\operatorname{rank}C(A)
=
\operatorname{rank}U
=
\operatorname{rank}(AH_n),
\]
which proves equation~\eqref{eq:rank-correlation-centered}.

Because
\[
\operatorname{rank}H_n=n-1,
\]
the standard rank inequality gives
\[
\operatorname{rank}(AH_n)
\le
\min\{\operatorname{rank}A,n-1\}.
\]
Sylvester's rank inequality gives
\[
\operatorname{rank}(AH_n)
\ge
\operatorname{rank}A+\operatorname{rank}H_n-n
=
\operatorname{rank}A-1.
\]
Together with equation~\eqref{eq:rank-correlation-centered}, these
inequalities prove equation~\eqref{eq:rank-reduction-bounds}.

If \(A\) is fixed, then \(A=C(A)\), and hence
\[
\operatorname{rank}A
=
\operatorname{rank}C(A)
\le
n-1.
\]
Thus every fixed point is singular. The relative interior of
\(\mathcal C_n\), in the affine space of symmetric unit-diagonal
matrices, consists precisely of the positive-definite correlation
matrices. Therefore every fixed point lies on the relative boundary of
\(\mathcal C_n\).
\end{proof}

\begin{proposition}[Sign-valued fixed points]
\label{prop:sign-fixed-points}
Let \(A\in\mathcal C_n\) have all entries in \(\{-1,1\}\). Then
there exists \(s\in\{-1,1\}^n\) such that
\begin{equation}\label{eq:sign-correlation-form}
A=ss^T.
\end{equation}
Moreover, \(A\in\mathcal D_n\) if and only if \(s\) contains both
signs, and every such nondegenerate matrix is a fixed point of \(C\).
Thus the correlation map has exactly
\begin{equation}\label{eq:number-sign-fixed-points}
2^{n-1}-1
\end{equation}
nondegenerate sign-valued fixed points.
\end{proposition}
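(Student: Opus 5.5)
The plan has three steps: show that $A$ is a rank-one sign pattern $ss^T$, identify which such patterns lie in $\mathcal D_n$, and verify the fixed-point property directly before counting.

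\textbf{Step 1: $A=ss^T$.} Let $A\in\mathcal C_n$ have entries in $\{-1,1\}$. For every pair $i,j$, the principal $2\times2$ submatrix on indices $\{i,j\}$ is positive semidefinite, but this only gives $1-A_{ij}^2\ge0$, which holds trivially. The real constraint comes from the Gram representation. Since $A\succeq0$ with unit diagonal, we can write $A=VV^T$ with unit row vectors $v_i$. Then $A_{ij}=\langle v_i,v_j\rangle=\pm1$, and by the equality case of Cauchy--Schwarz, $v_j=A_{ij}v_i$. Fixing $i=1$, every $v_j=A_{1j}v_1$. Setting $s_j=A_{1j}$ (so $s_1=1$) gives $A_{ij}=\langle v_i,v_j\rangle=s_is_j$, that is, $A=ss^T$.

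\textbf{Step 2: nondegeneracy.} Row $i$ of $ss^T$ is $s_is^T$. This row is constant exactly when $s$ is constant, because $s_i\neq0$. Hence $A\in\mathcal D_n$ if and only if $s$ contains both signs.

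\textbf{Step 3: fixed-point property.} For such $s$, every row $s_is^T$ is a nonzero multiple of the nonconstant vector $s^T$. Pearson correlation is invariant under positive scaling and changes sign under negative scaling, so $\operatorname{corr}(s_is,s_js)=s_is_j\operatorname{corr}(s,s)=s_is_j$. Therefore $C(A)=ss^T=A$. Equivalently, in the factorization of Proposition~\ref{prop:rank-bound}, $AH_n=s(H_ns)^T$ and $\Delta=\|H_ns\|I_n$, which give $C(A)=ss^T$ directly.

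\textbf{Counting.} The map $s\mapsto ss^T$ identifies $s$ with $-s$ and is otherwise injective: if $ss^T=tt^T$, then comparing first rows gives $t=\pm s$. So sign-valued correlation matrices correspond bijectively to the $2^n/2=2^{n-1}$ classes $\{s,-s\}$. Excluding the constant class $\{\mathbf1,-\mathbf1\}$, whose matrix is $\mathbf1\mathbf1^T$, leaves $2^{n-1}-1$ nondegenerate sign-valued fixed points.

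\textbf{Main obstacle.} The only nonroutine point is the rank-one structure in Step 1. Positive semidefiniteness of $2\times2$ minors alone does not force it; one needs either the Gram equality argument above or $3\times3$ minors. I would rely on the Gram argument.
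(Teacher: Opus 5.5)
Your proof is correct and follows essentially the same route as the paper: a Gram representation with the Cauchy--Schwarz equality case to obtain $A=ss^T$, the row form $s_is^T$ to characterize nondegeneracy, the scaling behavior of Pearson correlation to verify $C(ss^T)=ss^T$, and counting the classes $\{s,-s\}$ minus the constant class. Your explicit check that $ss^T=tt^T$ forces $t=\pm s$, and the alternative verification through the factorization of Proposition~\ref{prop:rank-bound}, are small additions that the paper leaves implicit or omits.
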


\begin{proof}
Write \(A\) as the Gram matrix of unit vectors
\(x_1,\ldots,x_n\). Since
\[
\langle x_1,x_i\rangle\in\{-1,1\},
\]
equality holds in the Cauchy--Schwarz inequality. Hence
\[
x_i=s_i x_1
\]
for some \(s_i\in\{-1,1\}\). Set \(s_1=1\). Then
\[
A_{ij}=s_i s_j,
\]
which proves equation~\eqref{eq:sign-correlation-form}.

The \(i\)-th row of \(ss^T\) is \(s_i s^T\). These rows are
nonconstant exactly when \(s\) contains both signs. In that case the
centered \(i\)-th row is
\[
s_i\bigl(s-\bar{s}\mathbf1\bigr)^T,
\]
and therefore
\[
\operatorname{corr}(s_i s,s_j s)
=
s_i s_j.
\]
Hence
\[
C(ss^T)=ss^T.
\]

Finally, \(s\) and \(-s\) determine the same matrix. There are
\(2^{n-1}\) sign-valued correlation matrices after this identification.
The equivalence class represented by the constant vector \(\mathbf1\)
gives the all-ones matrix, which lies outside \(\mathcal D_n\). This
leaves \(2^{n-1}-1\) nondegenerate sign-valued fixed points.
\end{proof}

\begin{corollary}[Rank-one correlation states]
\label{cor:rank-one-correlation-states}
Let \(P\in\mathcal C_n\) have rank one. Then
\[
P=ss^T
\]
for some \(s\in\{-1,1\}^n\). If, in addition,
\(P\in\mathcal C_n\cap\mathcal D_n\), then \(s\) contains both signs and
\(P\) is a nondegenerate sign-valued fixed point of \(C\).
\end{corollary}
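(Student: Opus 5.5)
The plan is to reduce the corollary to Proposition~\ref{prop:sign-fixed-points} by showing that every entry of \(P\) lies in \(\{-1,1\}\).

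First, I will use that \(P\) is symmetric positive semidefinite of rank one. The spectral theorem then gives a nonzero vector \(v\in\mathbb R^n\) with
\[
P=vv^T .
\]
Concretely, \(v=\sqrt{\lambda}\,w\), where \(\lambda>0\) is the unique nonzero eigenvalue and \(w\) is a unit eigenvector for it.

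Second, the unit-diagonal condition gives \(P_{ii}=v_i^2=1\) for every \(i\). Hence \(v_i\in\{-1,1\}\), so \(v=:s\in\{-1,1\}^n\) and
\[
P=ss^T .
\]
In particular every entry \(P_{ij}=s_is_j\) lies in \(\{-1,1\}\). Alternatively, I could write \(P\) as the Gram matrix of unit vectors \(x_1,\ldots,x_n\) spanning a one-dimensional space. Then each \(x_i=\pm x_1\), and the conclusion is the same.

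Third, for the nondegenerate case, assume \(P\in\mathcal C_n\cap\mathcal D_n\). Then \(P\) is a sign-valued element of \(\mathcal C_n\) lying in \(\mathcal D_n\), so Proposition~\ref{prop:sign-fixed-points} applies directly. It shows that \(s\) contains both signs, since otherwise \(P=\mathbf1\mathbf1^T\) has constant rows. It also shows that \(C(ss^T)=ss^T\), so \(P\) is a nondegenerate sign-valued fixed point.

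I expect no real obstacle. The only point needing care is the step \(P=vv^T\) with the \(\pm1\) normalization coming from the diagonal, and that is routine.
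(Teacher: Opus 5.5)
Your proposal is correct and follows the paper's proof essentially verbatim: you factor the rank-one positive semidefinite matrix as \(P=vv^T\), use the unit diagonal to force \(v\in\{-1,1\}^n\), and then invoke Proposition~\ref{prop:sign-fixed-points} for the nondegeneracy and fixed-point claims.
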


\begin{proof}
Since \(P\succeq0\) and \(\operatorname{rank}P=1\), write
\(P=vv^T\) for some nonzero \(v\in\mathbb R^n\). The unit-diagonal
condition gives \(v_i^2=1\) for every \(i\), hence
\(v\in\{-1,1\}^n\). The remaining assertions follow from
Proposition~\ref{prop:sign-fixed-points}.
\end{proof}

\begin{definition}[Pearson-nondegenerate elliptope]
\label{def:nondegenerate-elliptope}
Set
\[
\mathcal C_n^{\mathrm{nd}}
=
\mathcal C_n\cap\mathcal D_n.
\]
Thus \(\mathcal C_n^{\mathrm{nd}}\) consists of the correlation matrices on
which one further Pearson step is defined.
\end{definition}

\begin{proposition}[Unique Pearson-degenerate point of the elliptope]
\label{prop:unique-degenerate-point}
For every \(n\ge2\),
\begin{equation}\label{eq:unique-degenerate-point}
\mathcal C_n^{\mathrm{nd}}
=
\mathcal C_n\setminus\{\mathbf1\mathbf1^T\}.
\end{equation}
Equivalently, the all-ones matrix is the unique correlation matrix having a
constant row.
\end{proposition}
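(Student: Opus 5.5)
The plan is to show that if a correlation matrix $P\in\mathcal C_n$ has some constant row, then $P=\mathbf1\mathbf1^T$. The converse is immediate: every row of $\mathbf1\mathbf1^T$ is constant, so $\mathbf1\mathbf1^T\notin\mathcal D_n$. Together these give equation~\eqref{eq:unique-degenerate-point}.

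Suppose row $i$ of $P$ is constant. Since $P_{ii}=1$, every entry of that row equals $1$, so $P_{ij}=1$ for all $j$. As in the proof of Proposition~\ref{prop:sign-fixed-points}, write $P$ as the Gram matrix of unit vectors $x_1,\ldots,x_n$. Then $\langle x_i,x_j\rangle=1$ with $\|x_i\|=\|x_j\|=1$, so equality holds in Cauchy--Schwarz and $x_j=x_i$ for every $j$. Hence $P_{jk}=\langle x_i,x_i\rangle=1$ for all $j,k$, that is, $P=\mathbf1\mathbf1^T$.

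If one prefers to avoid the Gram representation, the same conclusion follows from positive semidefiniteness of $2\times2$ principal minors. The minor on $\{i,j\}$ is automatically singular when $P_{ij}=1$. To get $P_{jk}=1$, use the $3\times3$ principal minor on $\{i,j,k\}$ with $P_{ij}=P_{ik}=1$. Its determinant is $-(1-P_{jk})^2$, so positive semidefiniteness forces $P_{jk}=1$. The hypothesis $n\ge2$ ensures the statement is non-vacuous.

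There is no real obstacle here. The only point to state carefully is that a constant row must equal $1$ because the diagonal entry lies in that row; the rest is the Cauchy--Schwarz equality case already used earlier in the paper.
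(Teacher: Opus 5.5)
Your proof is correct and follows the paper's own argument: a constant row must be all ones because of the unit diagonal, and then the unit-vector Gram factorization together with the Cauchy--Schwarz equality case forces every Gram vector to coincide, giving $\mathbf1\mathbf1^T$. Your alternative $3\times3$ principal-minor route is also valid, since that determinant is indeed $-(1-P_{jk})^2$.
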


\begin{proof}
Let \(P\in\mathcal C_n\) and suppose that one of its rows is constant. Since
\(P_{ii}=1\), that row is equal to \(\mathbf1^T\). Write
\(P=XX^T\), where the rows \(x_1^T,\ldots,x_n^T\) of \(X\) are unit
vectors. If the \(i\)-th row of \(P\) is all ones, then
\[
1=P_{ij}=\langle x_i,x_j\rangle
\]
for every \(j\). Equality in the Cauchy--Schwarz inequality gives
\(x_j=x_i\) for every \(j\), and hence
\[
P=\mathbf1\mathbf1^T.
\]
Conversely, every row of \(\mathbf1\mathbf1^T\) is constant.
\end{proof}

\begin{theorem}[Forward invariance of the Pearson-nondegenerate elliptope]
\label{thm:forward-invariance}
For every \(n\ge2\),
\begin{equation}\label{eq:forward-invariance}
C\bigl(\mathcal C_n^{\mathrm{nd}}\bigr)
\subseteq
\mathcal C_n^{\mathrm{nd}}.
\end{equation}
Consequently, every Pearson trajectory starting in
\(\mathcal C_n^{\mathrm{nd}}\) is defined for all forward iterates.
\end{theorem}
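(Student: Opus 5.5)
We have to show that if \(P\in\mathcal C_n^{\mathrm{nd}}\), then \(Q=C(P)\) again lies in \(\mathcal C_n^{\mathrm{nd}}\). Proposition~\ref{prop:corrmatrix} already gives \(Q\in\mathcal C_n\), since \(P\in\mathcal D_n\). By Proposition~\ref{prop:unique-degenerate-point} it therefore suffices to prove the single claim
\[
C(P)\ne\mathbf1\mathbf1^T .
\]
The forward-iterate statement then follows by induction on \(k\), because each iterate \(C^k(P)\) stays in the domain \(\mathcal D_n\) on which \(C\) is defined.

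To prove the claim, argue by contradiction and suppose \(C(P)=\mathbf1\mathbf1^T\). From the Gram form \eqref{eq:correlation-gram-form}, \(C(P)=UU^T\), where the rows \(u_i\) are the normalized centered rows of \(P\). The condition \(\langle u_i,u_j\rangle=1\) for all \(i,j\) forces, by equality in Cauchy--Schwarz, all \(u_i\) to equal one common unit vector \(u\in\mathbf1^\perp\). Equivalently, every centered row of \(P\) is a positive multiple of \(u\):
\[
PH_n=\delta u^T, \qquad \delta=\Delta\mathbf1,\quad \text{all } \delta_i>0 .
\]
Hence \(P=\delta u^T+c\mathbf1^T\), where \(c\) is the vector of row means. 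So \(P\) is a symmetric matrix of rank at most two with this specific structure.

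Now exploit symmetry. Since \(P=P^T\), we have
\[
\delta u^T+c\mathbf1^T=u\delta^T+\mathbf1c^T .
\]
Multiply this identity on the right by \(u\), using \(u^T\mathbf1=0\) and \(\|u\|=1\), to get
\[
\delta=u(\delta^Tu)+\mathbf1(c^Tu).
\]
Thus \(\delta\) lies in \(\operatorname{span}\{u,\mathbf1\}\). Next, multiply on the right by \(\mathbf1\), which gives
\[
nc=u(\delta^T\mathbf1)+n\mathbf1\bar c .
\]
Write \(\alpha=\delta^Tu\), \(\beta=c^Tu\), and \(\sigma=\delta^T\mathbf1>0\). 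Substituting these expressions back, \(P\) becomes a quadratic form in the two vectors \(u\) and \(\mathbf1\):
\[
P=\alpha uu^T+\beta\mathbf1u^T+\tfrac{\sigma}{n}u\mathbf1^T+\bar c\,\mathbf1\mathbf1^T .
\]
Symmetry forces \(\beta=\sigma/n\). The same relation is obtained by comparing the \(\mathbf1\)-components: \(\beta=\bar\delta=\sigma/n\).

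The remaining step is to combine the positivity of the \(\delta_i\) with the unit diagonal, and this is where I expect the main obstacle. We have
\[
\delta_i=\alpha u_i+\beta>0 \quad\text{for all } i, \qquad \beta>0 .
\]
The unit diagonal gives
\[
1=P_{ii}=\alpha u_i^2+2\beta u_i+\bar c \quad\text{for all } i .
\]
Positive semidefiniteness also constrains the \(2\times2\) coefficient matrix in the basis \(\{u,\mathbf1\}\). The plan is to show that the diagonal equations force \(u\) to take at most two values, \(u_i\in\{t_1,t_2\}\), as roots of a quadratic. Then the relations \(\sum_i u_i=0\), \(\sum_i u_i^2=1\), \(\delta_i>0\), and \(P\succeq0\) should yield a contradiction. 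The contradiction is expected to show up either as \(u=0\), or as a nonconstant row of \(C(P)\).

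Should this computation become messy, I would first try a cleaner alternative. Note that \(C(P)=\mathbf1\mathbf1^T\) means all rows of \(P\) are positively affinely related. Taking the diagonal entries \(P_{ii}=1\) as the maximal entries of their rows, together with \(|P_{ij}|\le1\), compare rows \(i\) and \(j\) at positions \(i\) and \(j\). The row relation \(P_{k,:}=a_kP_{1,:}+b_k\mathbf1^T\) with \(a_k>0\) gives
\[
1=P_{kk}=a_kP_{1k}+b_k, \qquad P_{k1}=a_k+b_k .
\]
Combining this with symmetry \(P_{1k}=P_{k1}\) should force \(a_k=1\) and \(P_{1k}=1\), so that \(P=\mathbf1\mathbf1^T\), contradicting \(P\in\mathcal D_n\). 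This row-comparison route is what I would attempt first.
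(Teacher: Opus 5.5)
Your reduction to the single claim $C(P)\ne\mathbf1\mathbf1^T$, the representation $PH_n=\delta u^T$ with all $\delta_i>0$, and the induction for the forward-iterate statement all match the paper. As written, however, the proposal has a gap: in both routes you stop exactly at the decisive step (``should yield a contradiction'', ``should force''), so the contradiction itself is never derived. The obstacle you anticipate is not there.

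In your first route, choose $i,j$ with $u_i\ne u_j$; this is possible since $u\ne0$ and $u^T\mathbf1=0$. Subtracting the two diagonal equations gives
\[
0=(u_i-u_j)\bigl(\alpha(u_i+u_j)+2\beta\bigr)=(u_i-u_j)(\delta_i+\delta_j),
\]
which contradicts $\delta_i+\delta_j>0$. You need neither $\sum_iu_i^2=1$, nor $P\succeq0$, nor a two-valued structure of $u$. This is the paper's identity $(d_i+d_j)(v_j-v_i)=0$. The paper reaches it without the $\alpha,\beta,\sigma,\bar c$ bookkeeping: it writes $P_{ij}=1+d_i(v_j-v_i)$ directly from the row decomposition and the unit diagonal, then imposes $P_{ij}=P_{ji}$.

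Your row-comparison route also closes in one line, but not as you predict. Substituting $P_{1k}=P_{k1}=a_k+b_k$ into $1=a_kP_{1k}+b_k$ gives $(1+a_k)(1-a_k-b_k)=0$. Since $a_k>0$, this yields $b_k=1-a_k$ and hence $P_{1k}=1$ for every $k$. It does \emph{not} force $a_k=1$, and it need not. The first row of $P$ is then constant, which contradicts $P\in\mathcal D_n$ immediately.

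The facts that the diagonal entry is the row maximum and that $|P_{ij}|\le1$ play no role. Only symmetry, the unit diagonal, and positivity of the proportionality constants are used. With this completion your argument is correct and is the paper's argument restricted to the pairs $(1,k)$. It concludes that row $1$ is constant, instead of concluding that $v$ is constant and hence $v=0$.
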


\begin{proof}
Let \(P\in\mathcal C_n^{\mathrm{nd}}\), and define
\[
Z=\Delta^{-1}PH_n,
\qquad
\Delta
=
\operatorname{diag}
\bigl(
\|(PH_n)_{1,:}\|,\ldots,\|(PH_n)_{n,:}\|
\bigr).
\]
By Proposition~\ref{prop:rank-bound},
\[
C(P)=ZZ^T\in\mathcal C_n.
\]
Suppose for contradiction that
\(C(P)\notin\mathcal C_n^{\mathrm{nd}}\). By
Proposition~\ref{prop:unique-degenerate-point},
\[
C(P)=\mathbf1\mathbf1^T.
\]
If \(z_i^T\) denotes the \(i\)-th row of \(Z\), then
\[
\langle z_i,z_j\rangle=1
\]
for all \(i,j\). Since each \(z_i\) has unit norm, all rows coincide.
Write
\[
z_1=\cdots=z_n=v,
\qquad
\|v\|=1.
\]
Because every row of \(PH_n\) is centered, \(v^T\mathbf1=0\).

Set
\[
d_i=\|(PH_n)_{i,:}\|>0,
\qquad
m_i=\frac1nP_{i,:}\mathbf1.
\]
Since \((PH_n)_{i,:}=d_i v^T\),
\[
P_{i,:}=m_i\mathbf1^T+d_i v^T.
\]
The condition \(P_{ii}=1\) gives
\[
m_i=1-d_i v_i,
\]
and hence
\[
P_{ij}=1+d_i(v_j-v_i).
\]
Using symmetry, \(P_{ij}=P_{ji}\), we obtain
\[
(d_i+d_j)(v_j-v_i)=0.
\]
Since \(d_i+d_j>0\), all coordinates of \(v\) are equal. Together with
\(v^T\mathbf1=0\), this forces \(v=0\), contradicting \(\|v\|=1\).
Therefore \(C(P)\in\mathcal C_n^{\mathrm{nd}}\).
\end{proof}

It follows from the definition that every fixed point is a correlation
matrix and is therefore symmetric with unit diagonal. In dimension
three, it can consequently be parametrized by its three off-diagonal
entries, as developed in the next section.

\section{Dynamical System on the Elliptope}
\label{sec:dynamical-system}

Every \(3\times3\) correlation matrix has the form
\begin{equation}\label{eq:correlation-parametrization}
A(a,b,c)
=
\begin{pmatrix}
1&a&b\\
a&1&c\\
b&c&1
\end{pmatrix},
\qquad
(a,b,c)\in\mathcal E_3.
\end{equation}
For \(p=(a,b,c)\in\mathcal E_3\), we use the abbreviation
\[
A(p)=A(a,b,c).
\]
Here
\begin{equation}\label{eq:elliptope-e3}
\mathcal E_3
=
\left\{
(a,b,c)\in[-1,1]^3:
1+2abc-a^2-b^2-c^2\ge0
\right\}.
\end{equation}
We identify \(\mathcal E_3\) with this subset of \(\mathbb R^3\) through
the coordinates \( (a,b,c) \), and write \(\lambda_3\) for
three-dimensional Lebesgue measure in these coordinates.

To compute \(C(A)\), regard the rows of
equation~\eqref{eq:correlation-parametrization} as vectors:
\[
r_1=(1,a,b),
\qquad
r_2=(a,1,c),
\qquad
r_3=(b,c,1).
\]
For each row \(r_i\), let
\[
\widetilde r_i
=
r_i-\frac13(r_i\cdot\mathbf1)\mathbf1,
\]
denote its centered version. Then
\[
\operatorname{corr}(r_i,r_j)
=
\frac{
\widetilde r_i\cdot\widetilde r_j
}{
\|\widetilde r_i\|\,
\|\widetilde r_j\|
},
\]
provided \(\widetilde r_i\neq0\) and \(\widetilde r_j\neq0\).

Since \(C(A(a,b,c))\) is again a correlation matrix, it has the form
\[
C(A(a,b,c))
=
\begin{pmatrix}
1&T_1(a,b,c)&T_2(a,b,c)\\
T_1(a,b,c)&1&T_3(a,b,c)\\
T_2(a,b,c)&T_3(a,b,c)&1
\end{pmatrix}.
\]
Thus the correlation map induces
\[
T:\mathcal E_3^{\mathrm{nd}}\longrightarrow\mathcal E_3^{\mathrm{nd}},
\qquad
T(a,b,c)
=
\bigl(
T_1(a,b,c),
T_2(a,b,c),
T_3(a,b,c)
\bigr),
\]
defined on
\[
\mathcal E_3^{\mathrm{nd}}
=
\left\{
(a,b,c)\in\mathcal E_3:
\widetilde r_1\neq0,\;
\widetilde r_2\neq0,\;
\widetilde r_3\neq0
\right\}.
\]

\begin{remark}\label{rem:nondegenerate-domain}
Within the elliptope, a centered row can vanish only at the all-ones
point. For example, if the first row is constant, then \(a=b=1\).
Equation~\eqref{eq:elliptope-e3} becomes
\[
-(c-1)^2\ge0,
\]
forcing \(c=1\). The other rows are analogous. Therefore
\[
\mathcal E_3^{\mathrm{nd}}
=
\mathcal E_3\setminus\{(1,1,1)\}.
\]

By Theorem~\ref{thm:forward-invariance},
\[
T\bigl(\mathcal E_3^{\mathrm{nd}}\bigr)
\subseteq
\mathcal E_3^{\mathrm{nd}},
\]
so the induced map is a genuine self-map of its natural nondegenerate domain.
In particular, every orbit starting in \(\mathcal E_3^{\mathrm{nd}}\) is
defined for all forward iterates. The map \(T\) is continuous on
\(\mathcal E_3^{\mathrm{nd}}\), since its components involve polynomial
operations and square roots whose denominators are nonzero there.
\end{remark}

Substitution of the centered rows into the Pearson correlation formula
gives
\begin{align}\label{eq:T-components}
T_1(a,b,c)
&=
-\frac{
a^2+ab+ac-4a-2bc+b+c+1
}{
2\sqrt{
(a^2-ab-a+b^2-b+1)
(a^2-ac-a+c^2-c+1)
}
},
\notag\\[0.5em]
T_2(a,b,c)
&=
-\frac{
ab-2ac+a+b^2+bc-4b+c+1
}{
2\sqrt{
(a^2-ab-a+b^2-b+1)
(b^2-bc-b+c^2-c+1)
}
},
\notag\\[0.5em]
T_3(a,b,c)
&=
\frac{
2ab-ac-a-bc-b-c^2+4c-1
}{
2\sqrt{
(a^2-ac-a+c^2-c+1)
(b^2-bc-b+c^2-c+1)
}
}.
\end{align}

\begin{definition}[Omega-limit set]
\label{def:omega-limit-set}
For \(p\in\mathcal E_3^{\mathrm{nd}}\), the \emph{omega-limit set} of
\(p\) is
\[
\omega(p)
=
\left\{
p_\infty\in\mathcal E_3:
\text{there exists }k_j\to\infty\text{ with }
T^{k_j}(p)\to p_\infty
\right\}.
\]
We take accumulation points in the compact elliptope \(\mathcal E_3\), so
this definition also detects a hypothetical asymptotic approach to the
excluded vertex \((1,1,1)\).
\end{definition}

\section{Symmetry Properties}\label{sec:symmetry}

Let \(S_3\) denote the symmetric group on \(\{1,2,3\}\).
The fixed-point condition \(C(A)=A\) is invariant under simultaneous
permutation of rows and columns, and this induces an \(S_3\)-action on
the coordinate triples \((a,b,c)\). We use this symmetry to organize the
fixed points and their stability.

\begin{theorem}[Permutation symmetry]
Let \(A\in\mathcal D_n\), and let \(R\) be an \(n\times n\)
permutation matrix. Then \(RAR^T\in\mathcal D_n\) and
\begin{equation}\label{eq:correlation-permutation-equivariance}
C(RAR^T)
=
RC(A)R^T.
\end{equation}
\end{theorem}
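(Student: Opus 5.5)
The plan is to read the claim off the row-wise factorization of Proposition~\ref{prop:rank-bound}, using two facts about a permutation matrix \(R\): it is orthogonal (\(R^TR=RR^T=I_n\)), and it fixes the all-ones vector (\(R\mathbf1=R^T\mathbf1=\mathbf1\)). Together these give
\[
RH_nR^T=I_n-\tfrac1n R\mathbf1\mathbf1^TR^T=H_n,
\qquad\text{equivalently}\qquad H_nR^T=R^TH_n .
\]

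\textbf{Nondegeneracy.} Write \(\sigma\) for the permutation with \((RAR^T)_{ij}=A_{\sigma(i)\sigma(j)}\).
\begin{itemize}
\item The \(i\)-th row of \(RAR^T\) is the \(\sigma(i)\)-th row of \(A\), with its entries permuted by \(\sigma\).
\item Permuting the entries of a vector preserves being nonconstant.
\item Hence every row of \(RAR^T\) is nonconstant, i.e.\ \(RAR^T\in\mathcal D_n\).
\end{itemize}

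\textbf{Centered rows and norms.} Centering commutes with the permutation:
\[
(RAR^T)H_n=RAH_nR^T .
\]
So the rows of \((RAR^T)H_n\) are the rows of \(AH_n\), reordered by \(R\) and with entries permuted by \(R^T\). Right multiplication by the orthogonal matrix \(R^T\) preserves Euclidean row norms. Therefore the normalizing diagonal matrix for \(RAR^T\) is
\[
\Delta'=R\Delta R^T,
\qquad
\Delta'^{-1}=R\Delta^{-1}R^T,
\]
where \(\Delta\) is the one for \(A\) from Proposition~\ref{prop:rank-bound}.

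\textbf{Conclusion.} Substituting into \eqref{eq:correlation-centered-factorization} and cancelling \(R^TR=I_n\) gives
\[
C(RAR^T)=R\Delta^{-1}R^T\,RAR^T\,H_n\,RA^TR^T\,R\Delta^{-1}R^T
=R\Delta^{-1}AH_nA^T\Delta^{-1}R^T=RC(A)R^T,
\]
using \(R^TH_nR=H_n\) in the middle.

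No step is a real obstacle. The only point needing care is the identity \(\Delta'=R\Delta R^T\): it requires that conjugating a diagonal matrix by a permutation just reorders its diagonal entries, and that row norms are invariant under column permutation. I will state both explicitly.

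An alternative is to argue entrywise that Pearson correlation is invariant under a common permutation of the coordinates of two vectors, since means, centered inner products and norms all are. Then \([C(RAR^T)]_{ij}=\operatorname{corr}(r_{\sigma(i)},r_{\sigma(j)})\). I will mention this as a sanity check.
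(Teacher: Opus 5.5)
Your argument is correct, but your main route is not the paper's. The paper's proof is exactly what you relegate to a sanity check. Left multiplication by \(R\) reorders the rows, and right multiplication by \(R^T\) applies one common coordinate permutation to every row, which preserves nonconstancy. Pearson correlation is invariant under a simultaneous permutation of the coordinates of both vectors, so the entries of \(C(A)\) are merely relabeled.

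You instead transport the permutation through the factorization \(C(A)=\Delta^{-1}AH_nA^T\Delta^{-1}\) of Proposition~\ref{prop:rank-bound}. All of your ingredients are valid: \(R^TH_nR=H_n\) (from \(R^T\mathbf1=\mathbf1\)), \((RAR^T)H_n=RAH_nR^T\), \(\Delta'=R\Delta R^T\), and \((RAR^T)^T=RA^TR^T\). None of them uses symmetry or positive semidefiniteness of \(A\), so your proof covers all of \(\mathcal D_n\), as the theorem requires.

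The paper's version is shorter and independent of the factorization; it needs only the definition of \(\operatorname{corr}\). Yours is more mechanical, but as a by-product it records the covariance \(D(RPR^T)=RD(P)R^T\) of the normalization matrix. This makes explicit that the diagonal kernel update \(q^+=D(P)q\) of Proposition~\ref{prop:projective-kernel-dynamics} commutes with the index relabelings used in the proof of Theorem~\ref{thm:global-convergence}.
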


\begin{proof}
Left multiplication by \(R\) permutes the rows of \(A\), and right
multiplication by \(R^T\) applies the same coordinate permutation to
every row. These operations preserve the property that each row is
nonconstant, so \(RAR^T\in\mathcal D_n\). A common permutation of
coordinates preserves Pearson correlation. Therefore the pairwise row
correlations are permuted in the same way, yielding
equation~\eqref{eq:correlation-permutation-equivariance}.
\end{proof}

For \(\sigma\in S_3\), let \(R_\sigma\) be the corresponding
\(3\times3\) permutation matrix acting on the rows and columns of a
correlation matrix. For \(p\in\mathcal E_3\), define
\(\sigma\cdot p\) as the unique point satisfying
\begin{equation}\label{eq:coordinate-action}
A(\sigma\cdot p)
=
R_\sigma A(p)R_\sigma^T.
\end{equation}
This is well-defined because simultaneous permutation of rows and
columns preserves the correlation-matrix form and merely permutes the
three off-diagonal entries. We denote by \(K_\sigma\) the induced
\(3\times3\) permutation matrix on coordinate triples, so that
\[
\sigma\cdot p=K_\sigma p.
\]

\begin{corollary}\label{cor:T-equivariant}
For every \(\sigma\in S_3\) and
\(p\in\mathcal E_3^{\mathrm{nd}}\),
\[
T(\sigma\cdot p)
=
\sigma\cdot T(p).
\]
\end{corollary}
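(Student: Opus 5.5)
The plan is to read the corollary straight off the permutation-symmetry theorem, using the defining identity \eqref{eq:coordinate-action} in both directions. Fix \(\sigma\in S_3\) and \(p\in\mathcal E_3^{\mathrm{nd}}\).

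The first step checks that both sides of the claimed identity are defined. Conjugation by \(R_\sigma\) preserves symmetry, positive semidefiniteness and the unit diagonal, and it only permutes entries. Hence \(A(\sigma\cdot p)=R_\sigma A(p)R_\sigma^T\) is again a correlation matrix. It is distinct from \(\mathbf1\mathbf1^T\), because \(\mathbf1\mathbf1^T\) is fixed under conjugation by any permutation matrix, and \(A(p)\ne\mathbf1\mathbf1^T\). By Remark~\ref{rem:nondegenerate-domain} (equivalently Proposition~\ref{prop:unique-degenerate-point}) we get \(\sigma\cdot p\in\mathcal E_3^{\mathrm{nd}}\). Alternatively, the permutation-symmetry theorem already gives \(R_\sigma A(p)R_\sigma^T\in\mathcal D_3\). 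So \(T(\sigma\cdot p)\) is defined, and \(T(p)\in\mathcal E_3^{\mathrm{nd}}\) by forward invariance, so \(\sigma\cdot T(p)\) is defined as well.

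The second step is the computation. By the definition of \(T\), \(A(T(q))=C(A(q))\) for every \(q\in\mathcal E_3^{\mathrm{nd}}\). Then
\[
A\bigl(T(\sigma\cdot p)\bigr)=C\bigl(A(\sigma\cdot p)\bigr)=C\bigl(R_\sigma A(p)R_\sigma^T\bigr)=R_\sigma C(A(p))R_\sigma^T=R_\sigma A(T(p))R_\sigma^T=A\bigl(\sigma\cdot T(p)\bigr),
\]
where the third equality is \eqref{eq:correlation-permutation-equivariance}, and the second and last use \eqref{eq:coordinate-action}.

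The final step concludes from this. The parametrization \(p\mapsto A(p)\) is injective, since it reads off the three off-diagonal entries. Therefore \(T(\sigma\cdot p)=\sigma\cdot T(p)\).

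There is no genuine obstacle here. The only point needing care is the first step: checking that the coordinate action preserves \(\mathcal E_3^{\mathrm{nd}}\), so that the left-hand side makes sense.
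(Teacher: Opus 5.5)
Your proof is correct and is essentially the paper's argument: the paper applies the matrix identity \(C(RAR^T)=RC(A)R^T\) to \(A(p)\) and compares off-diagonal entries, which is exactly your chain of equalities followed by injectivity of \(p\mapsto A(p)\). Your first step checks that \(\sigma\cdot p\) stays in \(\mathcal E_3^{\mathrm{nd}}\), so that the left-hand side is defined. The paper leaves this implicit, since it is covered by the permutation-symmetry theorem's assertion \(RAR^T\in\mathcal D_n\).
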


\begin{proof}
This follows from
equation~\eqref{eq:correlation-permutation-equivariance} by applying the
matrix identity to \(A(p)\) and comparing the three off-diagonal
entries.
\end{proof}

The expressions under the square roots in
equation~\eqref{eq:T-components} are strictly positive at every point
of \(\mathcal E_3^{\mathrm{nd}}\). Hence the same coordinate formulas
define a smooth map on an open subset of \(\mathbb R^3\) containing
\(\mathcal E_3^{\mathrm{nd}}\). We use \(DT(p)\) for the Jacobian of
this smooth extension. Permutation equivariance continues to hold on
this open set because
equation~\eqref{eq:correlation-permutation-equivariance} does not
require positive semidefiniteness.

For \(T=(T_1,T_2,T_3)\), the Jacobian matrix at
\((a,b,c)\in\mathcal E_3^{\mathrm{nd}}\) is
\begin{equation}\label{eq:jacobian-definition}
DT(a,b,c)
=
\begin{pmatrix}
\partial_aT_1&\partial_bT_1&\partial_cT_1\\
\partial_aT_2&\partial_bT_2&\partial_cT_2\\
\partial_aT_3&\partial_bT_3&\partial_cT_3
\end{pmatrix}.
\end{equation}

\begin{corollary}\label{cor:similar_jacobians}
The Jacobian matrices at permutation-equivalent fixed points are similar
and therefore have identical spectra.
\end{corollary}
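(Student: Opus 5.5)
The plan is to differentiate the equivariance identity of Corollary~\ref{cor:T-equivariant}, written in coordinates as
\[
T(K_\sigma p)=K_\sigma T(p).
\]
By the remark preceding equation~\eqref{eq:jacobian-definition}, this identity holds on an open subset \(U\subseteq\mathbb R^3\) that contains \(\mathcal E_3^{\mathrm{nd}}\) and on which \(T\) is smooth. Equation~\eqref{eq:correlation-permutation-equivariance} does not use positive semidefiniteness, and \(K_\sigma\) is linear. So, after possibly shrinking \(U\) to the intersection \(\bigcap_{\sigma}K_\sigma^{-1}U\), which is still an open set containing \(\mathcal E_3^{\mathrm{nd}}\), both sides are smooth maps on an open set and can be differentiated at any interior point. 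Shrinking is harmless because \(\mathcal E_3^{\mathrm{nd}}\) is \(S_3\)-invariant.

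Differentiating at \(p\) with the chain rule gives
\[
DT(K_\sigma p)\,K_\sigma=K_\sigma\,DT(p).
\]
Since \(K_\sigma\) is a permutation matrix, it is invertible with \(K_\sigma^{-1}=K_\sigma^T\), so
\[
DT(K_\sigma p)=K_\sigma\,DT(p)\,K_\sigma^{-1}.
\]

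Now let \(p\) be a fixed point and \(p'\) a permutation-equivalent fixed point, so \(p'=\sigma\cdot p=K_\sigma p\) for some \(\sigma\in S_3\). The displayed identity shows that \(DT(p')\) is similar to \(DT(p)\) via \(K_\sigma\). Similar matrices have the same characteristic polynomial, hence the same eigenvalues with the same multiplicities. If one also wants the orbit of a fixed point to consist of fixed points, equivariance gives this directly: \(T(K_\sigma p)=K_\sigma T(p)=K_\sigma p\).

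The only delicate step is justifying the differentiation. The fixed points lie on the boundary of \(\mathcal E_3\), so the derivative has to be taken for the smooth extension on the open set \(U\), not for the restriction to the elliptope. The paper's remark already supplies both the positivity of the radicands on a neighbourhood and the validity of equivariance there, so that obstacle is resolved by the construction of \(U\) above.
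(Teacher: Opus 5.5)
Your proposal is correct and follows essentially the paper's own proof: differentiate the equivariance identity \(T(K_\sigma p)=K_\sigma T(p)\) for the smooth extension to obtain \(DT(K_\sigma p)=K_\sigma\,DT(p)\,K_\sigma^{-1}\). Your replacement of the open set by the \(S_3\)-invariant set \(\bigcap_\sigma K_\sigma^{-1}U\) is a small refinement that makes explicit a domain issue the paper leaves implicit.
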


\begin{proof}
By Corollary~\ref{cor:T-equivariant}, the smooth extension
satisfies
\[
T(K_\sigma p)
=
K_\sigma T(p).
\]
Differentiating at \(p\) gives
\[
DT(K_\sigma p)K_\sigma
=
K_\sigma DT(p).
\]
Since \(K_\sigma\) is invertible,
\[
DT(\sigma\cdot p)
=
K_\sigma DT(p)K_\sigma^{-1}.
\]
Thus the Jacobian matrices at \(p\) and \(\sigma\cdot p\) are similar
and have the same spectrum.
\end{proof}

We write
\[
\operatorname{Fix}(T)
=
\left\{
p\in\mathcal E_3^{\mathrm{nd}}:
T(p)=p
\right\}.
\]

For \(p\in\mathcal E_3^{\mathrm{nd}}\), write
\[
S_3\cdot p
=
\{K_\sigma p:\sigma\in S_3\}
\]
for its orbit, and
\[
\operatorname{Stab}_{S_3}(p)
=
\{\sigma\in S_3:K_\sigma p=p\}
\]
for its stabilizer.

\begin{corollary}[Orbit structure of fixed points]
\label{cor:fixed-point-orbits}
For every \(p\in\operatorname{Fix}(T)\), the orbit \(S_3\cdot p\) is
contained in \(\operatorname{Fix}(T)\). The possible orbit sizes are
\(1\), \(3\), and \(6\).
\end{corollary}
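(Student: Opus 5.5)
The plan has two parts: first show that the fixed-point set is invariant under the $S_3$-action, then apply the orbit--stabilizer theorem to the action of $S_3$ on coordinate triples.

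\emph{Invariance.} Let $p\in\operatorname{Fix}(T)$ and $\sigma\in S_3$. We must check that $K_\sigma p$ lies in the domain and is fixed.

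For the domain, recall that $K_\sigma p$ is defined by $A(K_\sigma p)=R_\sigma A(p)R_\sigma^T$. This matrix is again a correlation matrix, so $K_\sigma p\in\mathcal E_3$. By the permutation symmetry theorem, simultaneous permutation preserves nonconstancy of rows. Alternatively, $K_\sigma$ fixes $(1,1,1)$, so it maps $\mathcal E_3\setminus\{(1,1,1)\}$ to itself, and Remark~\ref{rem:nondegenerate-domain} gives $K_\sigma p\in\mathcal E_3^{\mathrm{nd}}$.

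For the fixed-point property, Corollary~\ref{cor:T-equivariant} gives
\[
T(K_\sigma p)=K_\sigma T(p)=K_\sigma p .
\]
Hence $S_3\cdot p\subseteq\operatorname{Fix}(T)$.

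\emph{Orbit sizes.} The assignment $\sigma\mapsto K_\sigma$ is a group action of $S_3$ on $\mathcal E_3$, because $R_{\sigma\tau}=R_\sigma R_\tau$ and $A(\cdot)$ is injective. By orbit--stabilizer,
\[
|S_3\cdot p|=\frac{6}{|\operatorname{Stab}_{S_3}(p)|},
\]
and by Lagrange's theorem the stabilizer has order $1$, $2$, $3$ or $6$. This leaves orbit sizes $6$, $3$, $2$, $1$, so the task is to rule out size $2$, i.e.\ a stabilizer of order $3$.

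\emph{Excluding size $2$.} A subgroup of order $3$ is $A_3$, generated by a $3$-cycle. The $3$-cycle $(123)$ acts on the off-diagonal positions $\{12,13,23\}$ as a $3$-cycle: it sends $12\to23$, $23\to31$ and $13\to21$. So $K_{(123)}$ cyclically permutes $(a,b,c)$, and it fixes $p$ only if $a=b=c$. But then every transposition also fixes $p$, so the stabilizer is all of $S_3$ and the orbit has size $1$. Therefore a stabilizer of order exactly $3$ is impossible.

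More concretely: the stabilizer is $S_3$ exactly when $a=b=c$; it has order $2$ exactly when exactly two coordinates are equal, since a transposition fixes one off-diagonal position and swaps the other two; otherwise it is trivial. This gives orbit sizes $1$, $3$ and $6$. The main obstacle is only this bookkeeping of how $S_3$ acts on the three off-diagonal positions, and it is routine.
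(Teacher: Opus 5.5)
Your proof is correct and follows the paper's argument essentially step for step. Invariance comes from Corollary~\ref{cor:T-equivariant}, orbit--stabilizer restricts the orbit size to divisors of $6$, and size $2$ is excluded because a $3$-cycle in the stabilizer forces $a=b=c$ and hence a full stabilizer; your explicit check that $K_\sigma p$ stays in $\mathcal E_3^{\mathrm{nd}}$, since $K_\sigma$ fixes $(1,1,1)$, is a small detail the paper leaves implicit.
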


\begin{proof}
By Corollary~\ref{cor:T-equivariant},
\[
T(K_\sigma p)=K_\sigma T(p)
\]
for every \(\sigma\in S_3\). Since \(p\in\operatorname{Fix}(T)\),
\[
T(K_\sigma p)
=
K_\sigma T(p)
=
K_\sigma p.
\]
Thus
\[
S_3\cdot p
\subseteq
\operatorname{Fix}(T).
\]

By the orbit--stabilizer theorem~\cite{dummitfoote2004},
\[
|S_3\cdot p|
=
\frac{
|S_3|
}{
|\operatorname{Stab}_{S_3}(p)|
}.
\]
Since \(|S_3|=6\), the orbit size divides \(6\), and hence belongs to
\[
\{1,2,3,6\}.
\]

Suppose that the orbit has size \(2\). Then the stabilizer has order
\(3\) and contains a three-cycle. If a three-cycle fixes
\(p=(a,b,c)\), then \(a=b=c\). In this case every element of \(S_3\)
fixes \(p\), so the orbit has size \(1\), a contradiction. Thus an
orbit of size \(2\) cannot occur.

If \(a=b=c\), the orbit has size \(1\). If exactly two coordinates are
equal, the orbit has size \(3\). If the coordinates are pairwise
distinct, the orbit has size \(6\).
\end{proof}

\section{Fixed-Point Equation and Complete Classification}
\label{sec:fixed-point-equations}

\begin{lemma}[Orthogonal equivalence of full-rank Gram factors]
\label{lem:orthogonal-gram-factors}
Let \(X,Y\in\mathbb R^{m\times r}\) have full column rank. If
\[
XX^T=YY^T,
\]
then there exists an orthogonal matrix
\(Q\in\mathbb R^{r\times r}\) such that
\[
Y=XQ.
\]
\end{lemma}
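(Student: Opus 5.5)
The plan is to build \(Q\) explicitly from a left inverse of \(X\) and then check that it is orthogonal. Since \(X\) has full column rank \(r\), the Gram matrix \(X^TX\in\mathbb R^{r\times r}\) is positive definite, hence invertible. Set
\[
X^+=(X^TX)^{-1}X^T,
\]
so that \(X^+X=I_r\). The candidate is then
\[
Q=X^+Y\in\mathbb R^{r\times r}.
\]

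\textbf{Step 1: \(XQ=Y\).} The matrix \(XX^+\) is the orthogonal projector onto the column space of \(X\). From \(XX^T=YY^T\) and the full column rank of both factors, the two column spaces coincide:
\[
\operatorname{col}(X)=\operatorname{col}(XX^T)=\operatorname{col}(YY^T)=\operatorname{col}(Y).
\]
The outer equalities use \(\operatorname{col}(MM^T)=\operatorname{col}(M)\), which follows from \(\ker(MM^T)=\ker(M^T)\) by taking orthogonal complements. Every column of \(Y\) therefore lies in \(\operatorname{col}(X)\), and the projector fixes it. Hence
\[
XQ=XX^+Y=Y.
\]

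\textbf{Step 2: \(Q\) is orthogonal.} Substituting \(Y=XQ\) into the hypothesis gives \(XQQ^TX^T=XX^T\). Multiplying on the left by \(X^+\) and on the right by \((X^+)^T\), and using \(X^+X=I_r\), yields
\[
QQ^T=I_r.
\]
Since \(Q\) is square, this already implies \(Q^TQ=I_r\), so \(Q\) is orthogonal.

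The only step with real content is the column-space equality in Step 1, and it rests on \(\ker(MM^T)=\ker(M^T)\). That identity follows from \(v^TMM^Tv=\|M^Tv\|^2\).
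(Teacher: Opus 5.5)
Your proof is correct and takes essentially the same route as the paper. Both arguments derive \(\operatorname{col}(X)=\operatorname{col}(Y)\) from \(\ker(MM^T)=\ker(M^T)\), write \(Y=XQ\), and then multiply \(XQQ^TX^T=XX^T\) on both sides by the left inverse \((X^TX)^{-1}X^T\) to obtain \(QQ^T=I_r\); the only difference is that you name \(Q=X^+Y\) explicitly through the projector \(XX^+\), whereas the paper just asserts that some \(Q\) exists and separately notes that it is invertible (a step your observation that \(QQ^T=I_r\) for square \(Q\) makes unnecessary).
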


\begin{proof}
For every real matrix \(M\),
\[
\ker(MM^T)=\ker(M^T).
\]
Indeed, for every vector \(z\) of the appropriate dimension,
\[
z^TMM^Tz=\|M^Tz\|^2.
\]
Thus \(MM^Tz=0\) implies \(M^Tz=0\), while the converse implication
is immediate.
The identity \(XX^T=YY^T\) therefore implies
\[
\ker(X^T)=\ker(Y^T).
\]
Taking orthogonal complements shows that \(X\) and \(Y\) have the same
column space. Since \(X\) has full column rank, there exists a matrix
\(Q\in\mathbb R^{r\times r}\) such that
\[
Y=XQ.
\]
The full column rank of \(Y\) implies that \(Q\) is invertible.

Substitution into \(YY^T=XX^T\) gives
\[
XQQ^TX^T=XX^T.
\]
Multiplying on the left by the left inverse
\[
(X^TX)^{-1}X^T
\]
and on the right by its transpose yields
\[
QQ^T=I_r.
\]
Thus \(Q\) is orthogonal.
\end{proof}
\begin{lemma}[Three eigenlines in dimension two]
\label{lem:three-eigenlines}
Let \(L:\mathbb R^2\to\mathbb R^2\) be linear. If \(L\) has three
distinct eigenlines, then \(L\) is a scalar operator.
\end{lemma}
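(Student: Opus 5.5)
The plan is to pick three distinct eigenlines, spanned by \(v_1,v_2,v_3\in\mathbb R^2\), with \(Lv_i=\lambda_iv_i\), and show that the three eigenvalues coincide. Once \(\lambda_1=\lambda_2=\lambda_3=\lambda\), the conclusion follows quickly.

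Since the lines are distinct, any two of the vectors are linearly independent. In particular \(\{v_1,v_2\}\) is a basis of \(\mathbb R^2\). We can therefore write
\[
v_3=\alpha v_1+\beta v_2 .
\]
Because \(v_3\) spans a line different from those spanned by \(v_1\) and by \(v_2\), both coefficients \(\alpha\) and \(\beta\) are nonzero.

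Apply \(L\) to this decomposition in two ways:
\[
\lambda_3(\alpha v_1+\beta v_2)=Lv_3=\alpha\lambda_1v_1+\beta\lambda_2v_2 .
\]
Comparing coefficients in the basis \(\{v_1,v_2\}\) gives
\[
\alpha(\lambda_3-\lambda_1)=0,
\qquad
\beta(\lambda_3-\lambda_2)=0 .
\]
Since \(\alpha\ne0\) and \(\beta\ne0\), we get \(\lambda_1=\lambda_3=\lambda_2=:\lambda\).

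Finally, \(L\) and \(\lambda I_2\) agree on the basis \(\{v_1,v_2\}\), so \(L=\lambda I_2\) by linearity.

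The only point that needs care is justifying that \(\alpha\) and \(\beta\) are both nonzero, and this is exactly where distinctness of the three lines is used. There is no real obstacle. The lemma is presumably meant for the later step that identifies an orthogonal or linear map \(Q\) from Lemma~\ref{lem:orthogonal-gram-factors} as a scalar multiple of the identity, possibly \(\pm I_2\).
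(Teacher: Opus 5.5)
Your proof is correct and matches the paper's argument: both expand an eigenvector of the third line in the basis given by the first two, use distinctness of the lines to make both coefficients nonzero, and compare coefficients to force all eigenvalues to agree. One small aside on your closing guess: the paper applies the lemma to \(L=QS^{1/2}\) rather than to \(Q\) itself, and only afterwards deduces \(Q=I_2\) from the symmetry and positive definiteness of \(S^{1/2}\).
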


\begin{proof}
Choose nonzero eigenvectors \(u\) and \(v\) spanning two of the
eigenlines. Since the lines are distinct, \(u\) and \(v\) form a basis
of \(\mathbb R^2\). Write
\[
Lu=\alpha u,
\qquad
Lv=\beta v.
\]
A nonzero vector on the third eigenline has the form
\[
w=su+tv,
\qquad
st\ne0.
\]
If \(Lw=\gamma w\), then
\[
s(\alpha-\gamma)u+t(\beta-\gamma)v=0.
\]
Since \(u,v\) form a basis and \(st\ne0\), it follows that
\[
\alpha=\beta=\gamma.
\]
Hence \(L=\alpha I_2\).
\end{proof}

\begin{lemma}[Invariant mixed boundary curve]
\label{lem:mixed-invariant-curve}
For
\[
-1<t<1,
\]
define
\[
\gamma_{\mathrm{mix}}(t)=(-1,-t,t).
\]
Then
\[
\gamma_{\mathrm{mix}}(t)\in\mathcal E_3^{\mathrm{nd}},
\]
and the curve is invariant under \(T\). More precisely, if
\begin{equation}\label{eq:mixed-scalar-map}
g(t)
=
\frac{4t}
{\sqrt{3t^4+10t^2+3}},
\end{equation}
then
\begin{equation}\label{eq:mixed-invariant-map}
T\bigl(\gamma_{\mathrm{mix}}(t)\bigr)
=
\gamma_{\mathrm{mix}}\bigl(g(t)\bigr),
\end{equation}
or equivalently,
\[
T(-1,-t,t)
=
\bigl(-1,-g(t),g(t)\bigr).
\]
\end{lemma}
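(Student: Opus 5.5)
The plan is a direct computation on the explicit matrix, combined with the structural facts already established; the main simplification is that the second row of \(A(\gamma_{\mathrm{mix}}(t))\) is the negative of the first. First I will check membership. For \(p=(-1,-t,t)\) we have \(p\in[-1,1]^3\) when \(|t|<1\), and the elliptope polynomial gives
\[
1+2(-1)(-t)(t)-1-t^2-t^2=0,
\]
so \(p\in\mathcal E_3\), and in fact \(p\) lies on the boundary. Since the first coordinate is \(-1\ne1\), we have \(p\ne(1,1,1)\). Remark~\ref{rem:nondegenerate-domain} then gives \(p\in\mathcal E_3^{\mathrm{nd}}\).

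Next I will compute \(T(p)\) from the centered rows rather than from the general formulas in equation~\eqref{eq:T-components}. The rows are
\[
r_1=(1,-1,-t),\qquad r_2=-r_1,\qquad r_3=(-t,t,1).
\]
Hence \(\widetilde r_2=-\widetilde r_1\). This immediately gives
\[
T_1(p)=\operatorname{corr}(r_1,r_2)=-1
\qquad\text{and}\qquad
T_3(p)=\operatorname{corr}(r_2,r_3)=-\operatorname{corr}(r_1,r_3)=-T_2(p).
\]
So the image automatically has the shape \((-1,-s,s)\). It only remains to identify \(s=\operatorname{corr}(r_2,r_3)\).

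For this I will scale the centered rows by \(3\):
\[
3\widetilde r_1=(3+t,\;t-3,\;-2t),
\qquad
3\widetilde r_3=(-3t-1,\;3t-1,\;2).
\]
A short expansion then yields
\[
9\,\widetilde r_1\cdot\widetilde r_3=-24t,\qquad
9\|\widetilde r_1\|^2=6(t^2+3),\qquad
9\|\widetilde r_3\|^2=6(3t^2+1).
\]
Therefore
\[
\operatorname{corr}(r_1,r_3)=-\frac{4t}{\sqrt{(t^2+3)(3t^2+1)}}=-g(t),
\]
using \((t^2+3)(3t^2+1)=3t^4+10t^2+3\). Consequently \(T(p)=(-1,-g(t),g(t))\), which is equation~\eqref{eq:mixed-invariant-map}.

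Finally, to make the invariance statement meaningful I must check that \(g(t)\in(-1,1)\), so that the image lies again on the parametrized curve. This follows from
\[
3t^4+10t^2+3-16t^2=3(t^2-1)^2>0
\qquad\text{for }|t|<1.
\]
Alternatively, membership of the image in \(\mathcal E_3^{\mathrm{nd}}\) already follows from Theorem~\ref{thm:forward-invariance}, but the explicit inequality is needed to land in the open parameter range.

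The only genuine obstacle is the arithmetic in the inner product and the two norms, and there sign errors are easy to make. I will double-check it by comparing with equation~\eqref{eq:T-components} at \(t=0\), where the image should be \((-1,0,0)\). I will also check symbolically that the square-root factors in \(T_3\) reduce to \((t^2+3)(3t^2+1)/4\) up to the constant factors.
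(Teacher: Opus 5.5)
Your proposal is correct and follows essentially the same route as the paper: boundary membership via the vanishing determinant, the observation \(r_2=-r_1\), the explicit centered rows giving \(\widetilde r_1\cdot\widetilde r_3=-8t/3\) and the two norms, and the bound \(|g(t)|<1\) from \(3t^4+10t^2+3-16t^2=3(t^2-1)^2>0\). Your arithmetic (after scaling by \(3\)) checks out, and your form of the final inequality already covers \(t=0\), so it does not need the separate case the paper makes.
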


\begin{proof}
For
\[
p=\gamma_{\mathrm{mix}}(t)=(-1,-t,t),
\]
the elliptope determinant is
\[
1+2(-1)(-t)t-(-1)^2-(-t)^2-t^2=0,
\]
so \(p\in\partial\mathcal E_3\). Since \(-1<t<1\), the associated
correlation matrix
\[
A(-1,-t,t)
\]
is not the all-ones matrix and hence belongs to
\(\mathcal C_3^{\mathrm{nd}}\).

The rows of \(A(-1,-t,t)\) are
\[
r_1=(1,-1,-t),
\qquad
r_2=(-1,1,t)=-r_1,
\qquad
r_3=(-t,t,1).
\]
Their centered versions are
\[
\widetilde r_1
=
\frac13(3+t,t-3,-2t),
\qquad
\widetilde r_2=-\widetilde r_1,
\]
and
\[
\widetilde r_3
=
\frac13(-3t-1,3t-1,2).
\]
A direct calculation gives
\[
\widetilde r_1\cdot\widetilde r_3
=
-\frac{8t}{3},
\]
together with
\[
\|\widetilde r_1\|^2
=
\frac23(t^2+3),
\qquad
\|\widetilde r_3\|^2
=
\frac23(3t^2+1).
\]
Therefore
\[
\operatorname{corr}(r_1,r_3)
=
-\frac{4t}
{\sqrt{(t^2+3)(3t^2+1)}}
=
-g(t).
\]
Since \(r_2=-r_1\),
\[
\operatorname{corr}(r_1,r_2)=-1,
\qquad
\operatorname{corr}(r_2,r_3)=g(t).
\]
Hence
\[
T(-1,-t,t)
=
\bigl(-1,-g(t),g(t)\bigr).
\]

It remains to verify that the image parameter remains in \((-1,1)\).
For \(0<|t|<1\),
\[
g(t)^2
=
\frac{16t^2}{3t^4+10t^2+3},
\]
and
\[
3t^4+10t^2+3-16t^2
=
3(t^2-1)^2>0.
\]
Thus
\[
|g(t)|<1.
\]
Also \(g(0)=0\). Hence \(g((-1,1))\subset(-1,1)\), which proves
invariance of the curve.
\end{proof}

\begin{theorem}[Complete fixed-point classification in dimension three]
\label{thm:complete-classification}
The fixed-point set of \(T\) on
\(\mathcal E_3^{\mathrm{nd}}\) is
\begin{equation}\label{eq:complete-fixed-point-set}
\begin{aligned}
\operatorname{Fix}(T)
=
\biggl\{&
\left(-\frac12,-\frac12,-\frac12\right),
(1,-1,-1),
(-1,1,-1),
(-1,-1,1),\\
&
(-1,0,0),
(0,-1,0),
(0,0,-1)
\biggr\}.
\end{aligned}
\end{equation}
In particular, the \(3\times3\) correlation map has exactly seven fixed
points on its natural nondegenerate domain.
\end{theorem}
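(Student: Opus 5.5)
The plan is to split the fixed-point problem by rank, using Proposition~\ref{prop:rank-bound}. If \(P=A(p)\) is fixed, then \(\operatorname{rank}P=\operatorname{rank}C(P)\le 2\), so \(P\) has rank one or rank two.

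\textbf{Rank one.} By Corollary~\ref{cor:rank-one-correlation-states}, \(P=ss^T\) with \(s\) containing both signs. Modulo \(\pm s\) this gives \(s\in\{(1,1,-1),(1,-1,1),(-1,1,1)\}\), which are the three patterned points \((1,-1,-1)\), \((-1,1,-1)\), \((-1,-1,1)\). Proposition~\ref{prop:sign-fixed-points} shows each of them is fixed.

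\textbf{Rank two: reduction to an operator.} Suppose now \(\operatorname{rank}P=2\). Write \(P=XX^T\) with \(X\in\mathbb R^{3\times2}\) of full column rank and unit rows. By equation~\eqref{eq:correlation-centered-factorization}, \(C(P)=UU^T\) with \(U=\Delta^{-1}XX^TH_3\). The rank identity gives \(\operatorname{rank}(XX^TH_3)=2\), so \(\operatorname{rank}(X^TH_3)=2\). Let \(W\in\mathbb R^{3\times 2}\) have orthonormal columns spanning \(\mathbf 1^\perp\), so \(H_3=WW^T\). Then
\[
C(P)=\Delta^{-1}X(X^TH_3X)X^T\Delta^{-1}=YY^T,
\qquad
Y=\Delta^{-1}XS^{1/2},
\]
where \(S=X^TH_3X=X^TWW^TX\succ0\). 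The fixed-point equation \(YY^T=XX^T\) and Lemma~\ref{lem:orthogonal-gram-factors} give \(\Delta^{-1}XS^{1/2}=XQ\) with \(Q\) orthogonal. Row-wise this reads
\[
x_i^TS^{1/2}Q^{-1}=d_i\,x_i^T .
\]
So each row \(x_i\) is an eigenvector of the linear map \(L=(S^{1/2}Q^T)^T\) on \(\mathbb R^2\), with eigenvalue \(d_i>0\).

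\textbf{Case A: the rows span three distinct lines.} Lemma~\ref{lem:three-eigenlines} makes \(L\) scalar, so \(\Delta=dI\). Then \(S^{1/2}=dQ\) with \(S^{1/2}\) symmetric positive definite, which forces \(Q=I\) and \(S=d^2I\). The fixed-point condition becomes \(d^{-2}PH_3P=P\), i.e.\ \(PH_3P=d^2P\). Hence \(P\) restricted to its range is \(d^2\) times the orthogonal projection, in \(P\)-geometry, onto \(\mathbf 1^\perp\). Since the range is two-dimensional and \(X^TH_3X\) is scalar, this should force \(\operatorname{range}P=\mathbf1^\perp\), i.e.\ \(P\mathbf1=0\) and \(P=\tfrac32H_3\). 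That is the equicorrelation point \((-\tfrac12,-\tfrac12,-\tfrac12)\). To verify the step: \(PH_3P=d^2P\) with \(P\succeq0\) of rank two means \(H_3\) acts as \(d^2P^{+}\) on the range. Comparing traces and using that \(H_3\) is a projection should yield \(P\mathbf 1=0\). The unit diagonal then gives \(P=\tfrac32 H_3\) and \(d^2=\tfrac32\).

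\textbf{Case B: two rows are collinear.} Then \(x_j=\pm x_i\) for some pair, i.e.\ some off-diagonal coordinate equals \(\pm1\). By Corollary~\ref{cor:T-equivariant} I may assume \(a=\pm1\). The elliptope inequality \(-(b\mp c)^2\ge0\) forces \((a,b,c)=(1,t,t)\) or \((-1,-t,t)\).

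For \(a=-1\), Lemma~\ref{lem:mixed-invariant-curve} reduces the fixed-point equation to \(g(t)=t\). This means \(t=0\) or \(16=3t^4+10t^2+3\), i.e.\ \((t^2-1)(3t^2+13)=0\), which is excluded for \(|t|<1\). At \(t=\pm1\) the point is patterned, already counted. So \(t=0\), giving \((-1,0,0)\), and by symmetry its orbit \((0,-1,0)\), \((0,0,-1)\).

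For \(a=1\), I compute \(T(1,t,t)\) directly from equation~\eqref{eq:T-components}. The first and second rows coincide, so \(T_1=1\), and \(T_2=T_3=h(t)\). I expect an explicit computation to show \(h(t)=t\) only at \(t=1\) (excluded) or \(t=-1\) (patterned).

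\textbf{Conclusion and main obstacle.} Finally, I check by substitution that the seven listed points are fixed. The main obstacle is Case A: rigorously deducing \(P\mathbf1=0\) from the scalar structure, while making sure that the possibility \(\Delta\) scalar with \(Q\ne I\) is excluded by symmetry and positivity of \(S^{1/2}\).
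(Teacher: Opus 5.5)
Your overall route is the paper's own: the rank split, the Gram factor $P=XX^T$ with $S=X^TH_3X\succ0$, Lemma~\ref{lem:orthogonal-gram-factors}, the eigenlines of $L=QS^{1/2}$, Lemma~\ref{lem:three-eigenlines}, then $Q=I_2$ and $S=d^2I_2$. The step you flag in Case A is a genuine gap, however, and the argument you sketch for it would fail. Your claim that a two-dimensional range together with scalar $X^TH_3X$ forces $\operatorname{range}P=\mathbf1^\perp$ is false. Neither $PH_3P=d^2P$ nor a trace comparison can give $P\mathbf1=0$. For a counterexample, let the columns of $Z\in\mathbb R^{3\times2}$ be $d$ times an orthonormal basis of $\mathbf1^\perp$. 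Pick $w\in\mathbb R^2$ with $0<\|w\|<1$, set $d^2=\tfrac32(1-\|w\|^2)$, and put $X=Z+\mathbf1w^T$. Then $H_3X=Z$, so $X^TH_3X=d^2I_2$, $PH_3P=XSX^T=d^2P$, $\operatorname{rank}P=2$, and $\operatorname{tr}P=2d^2+3\|w\|^2=3$. Yet $P\mathbf1=3Xw\neq0$. The single scalar constraint $\operatorname{tr}P=3$ is too weak.

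The missing idea is to use the unit diagonal entry by entry, which is what the paper does. Put $z_i=x_i-\bar x$, so that $Z=H_3X$ satisfies $Z^TZ=d^2I_2$ and $Z^T\mathbf1=0$. Hence the columns of $d^{-1}Z$ form an orthonormal basis of $\mathbf1^\perp$, so $ZZ^T=d^2H_3$ and $\|z_i\|^2=2d^2/3$. Then $1=\|x_i\|^2=\|\bar x\|^2+2\langle\bar x,z_i\rangle+2d^2/3$ shows that the three numbers $\langle\bar x,z_i\rangle$ are equal. They sum to $\langle\bar x,z_1+z_2+z_3\rangle=0$, so they all vanish. Since $Z$ has rank two, $z_1,z_2,z_3$ span $\mathbb R^2$, which forces $\bar x=0$ (in the example above, $\bar x=w$). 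Thus $P=ZZ^T=d^2H_3$, and $P_{ii}=1$ gives $d^2=\tfrac32$ and $(a,b,c)=(-\tfrac12,-\tfrac12,-\tfrac12)$.

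Two smaller loose ends remain. The computation you defer for $a=1$ is immediate: the centered rows of $A(1,t,t)$ are $\pm\tfrac{1-t}{3}(1,1,-2)$, so $T(1,t,t)=(1,-1,-1)\neq(1,t,t)$ for $|t|<1$. Also, $QS^{1/2}=dI_2$ gives $S^{1/2}=dQ^T$ rather than $dQ$, though this does not affect the conclusion $Q=I_2$.
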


\begin{proof}
Let
\[
A=A(a,b,c)\in\mathcal C_3\cap\mathcal D_3
\]
be a fixed point:
\[
C(A)=A.
\]
By Proposition~\ref{prop:rank-bound},
\[
\operatorname{rank}A\le2.
\]
Since \(A\) has diagonal entries equal to one, it is nonzero. Hence
\[
\operatorname{rank}A\in\{1,2\}.
\]

\medskip
\noindent
\textbf{Case 1: \(\operatorname{rank}A=1\).}

Because \(A\) is positive semidefinite and has rank one, there exists
\(v\in\mathbb R^3\) such that
\[
A=vv^T.
\]
The unit-diagonal condition implies
\[
v_i^2=A_{ii}=1,
\qquad i=1,2,3,
\]
and therefore
\[
v\in\{-1,1\}^3.
\]
The vectors \(v\) and \(-v\) produce the same matrix \(vv^T\). The
constant sign class gives the all-ones matrix, which lies outside
\(\mathcal D_3\). Hence every rank-one fixed point has one of the
following three off-diagonal coordinate triples:
\[
(1,-1,-1),
\qquad
(-1,1,-1),
\qquad
(-1,-1,1).
\]
Conversely, Proposition~\ref{prop:sign-fixed-points} shows that all
three matrices are fixed. These are therefore precisely the rank-one
fixed points.

\medskip
\noindent
\textbf{Case 2: \(\operatorname{rank}A=2\).}

Choose a full-column-rank matrix
\[
X\in\mathbb R^{3\times2}
\]
such that
\begin{equation}\label{eq:rank-two-gram-factorization}
A=XX^T.
\end{equation}
Write the rows of \(X\) as
\[
x_1^T,\quad x_2^T,\quad x_3^T,
\qquad
x_i\in\mathbb R^2.
\]
Since \(A_{ii}=1\),
\begin{equation}\label{eq:unit-gram-vectors}
\|x_i\|=1,
\qquad i=1,2,3.
\end{equation}

Let
\[
H_3
=
I_3-\frac13\mathbf1\mathbf1^T
\]
and define
\begin{equation}\label{eq:S-definition}
S=X^TH_3X.
\end{equation}
The \(i\)-th row of \(A=XX^T\) is \(x_i^TX^T\). Its centered version is
therefore
\[
x_i^TX^TH_3.
\]
Using \(H_3^T=H_3\) and \(H_3^2=H_3\), the inner product of the centered
\(i\)-th and \(j\)-th rows is
\[
x_i^TX^TH_3Xx_j
=
x_i^TSx_j,
\]
and the squared norm of the centered \(i\)-th row is
\[
x_i^TSx_i.
\]
Consequently,
\begin{equation}\label{eq:rank-two-correlation-formula}
[C(A)]_{ij}
=
\frac{x_i^TSx_j}
{\sqrt{x_i^TSx_i}\sqrt{x_j^TSx_j}}.
\end{equation}

The matrix \(S\) is symmetric and positive semidefinite because
\(H_3\) is an orthogonal projector. Set
\[
d_i=\sqrt{x_i^TSx_i}.
\]
Since \(A\in\mathcal D_3\), none of its centered rows vanishes, and thus
\begin{equation}\label{eq:positive-di}
d_i>0,
\qquad i=1,2,3.
\end{equation}
Let
\[
D=\operatorname{diag}(d_1,d_2,d_3).
\]
Equation~\eqref{eq:rank-two-correlation-formula} can be written as
\begin{equation}\label{eq:rank-two-correlation-matrix-form}
C(A)
=
D^{-1}XSX^TD^{-1}.
\end{equation}

The matrix \(S\) is positive definite. Indeed, \(S\) is positive
semidefinite, and if it were singular, then
\[
\operatorname{rank}S\le1.
\]
Equation~\eqref{eq:rank-two-correlation-matrix-form} would
then give
\[
\operatorname{rank}C(A)
\le
\operatorname{rank}S
\le1.
\]
This contradicts
\[
C(A)=A
\qquad\text{and}\qquad
\operatorname{rank}A=2.
\]
Hence
\begin{equation}\label{eq:S-positive-definite}
S\succ0.
\end{equation}
Let \(S^{1/2}\) denote the positive-definite square root of \(S\).

Define
\begin{equation}\label{eq:y-definition}
y_i
=
\frac{S^{1/2}x_i}{d_i},
\qquad
i=1,2,3,
\end{equation}
and let \(Y\in\mathbb R^{3\times2}\) be the matrix with rows \(y_i^T\).
For every \(i,j\),
\[
\langle y_i,y_j\rangle
=
\frac{x_i^TSx_j}{d_id_j}
=
[C(A)]_{ij}.
\]
Therefore
\[
YY^T=C(A)=A=XX^T.
\]
Since \(XX^T\) has rank two, both \(X\) and \(Y\) have full column rank.
Lemma~\ref{lem:orthogonal-gram-factors} gives an orthogonal matrix
\(Q\in\mathbb R^{2\times2}\) such that
\begin{equation}\label{eq:Y-XQ}
Y=XQ.
\end{equation}

The \(i\)-th row of equation~\eqref{eq:Y-XQ} is
\[
y_i^T=x_i^TQ.
\]
After transposition,
\[
y_i=Q^Tx_i.
\]
Combining this identity with equation~\eqref{eq:y-definition} gives
\[
\frac{S^{1/2}x_i}{d_i}
=
Q^Tx_i.
\]
Multiplying by \(Q\), we obtain
\begin{equation}\label{eq:eigenline-relation}
QS^{1/2}x_i=d_ix_i,
\qquad
i=1,2,3.
\end{equation}
Thus each line \(\operatorname{span}\{x_i\}\) is an eigenline of the
real linear operator
\[
L=QS^{1/2}.
\]

Because \(X\) has rank two, its row vectors span \(\mathbb R^2\).
Hence the vectors \(x_1,x_2,x_3\) cannot all lie on one line. They
therefore determine either three distinct eigenlines or exactly two
distinct eigenlines.

\medskip
\noindent
\textbf{Subcase 2a: the vectors determine three distinct eigenlines.}

By Lemma~\ref{lem:three-eigenlines}, the operator \(L=QS^{1/2}\) is
scalar:
\begin{equation}\label{eq:L-scalar}
QS^{1/2}=\lambda I_2.
\end{equation}
Since \(x_i\ne0\), equations~\eqref{eq:eigenline-relation} and
\eqref{eq:L-scalar} give
\[
\lambda=d_i
\]
for every \(i\). Hence \(\lambda>0\). Multiplying equation~\eqref{eq:L-scalar} on the
left by \(Q^T\) gives
\[
S^{1/2}=\lambda Q^T.
\]
Consequently,
\[
Q^T=\lambda^{-1}S^{1/2}.
\]
The right-hand side is symmetric positive definite. Thus \(Q^T\) is
both orthogonal and symmetric positive definite. A symmetric
orthogonal matrix has eigenvalues in \(\{-1,1\}\), whereas positive
definiteness excludes the eigenvalue \(-1\). Hence
\[
Q=I_2.
\]
It follows that
\begin{equation}\label{eq:S-isotropic}
S=\lambda^2I_2.
\end{equation}

Define
\[
\bar x
=
\frac{x_1+x_2+x_3}{3},
\qquad
z_i=x_i-\bar x,
\]
and let \(Z\in\mathbb R^{3\times2}\) have rows \(z_i^T\). Since
\[
Z=H_3X,
\]
we have
\[
Z^TZ
=
X^TH_3^TH_3X
=
X^TH_3X
=
S
=
\lambda^2I_2
\]
and
\[
Z^T\mathbf1
=
X^TH_3\mathbf1
=
0.
\]
Thus the two columns of \(\lambda^{-1}Z\) form an orthonormal basis of
the two-dimensional subspace
\[
\mathbf1^\perp
=
\{u\in\mathbb R^3:u^T\mathbf1=0\}.
\]
The matrix \(\lambda^{-2}ZZ^T\) is therefore the orthogonal projector
onto \(\mathbf1^\perp\), namely \(H_3\). Hence
\begin{equation}\label{eq:ZZT-projector}
ZZ^T=\lambda^2H_3.
\end{equation}
Reading the diagonal and off-diagonal entries of
equation~\eqref{eq:ZZT-projector} gives
\begin{equation}\label{eq:centered-equilateral-relations}
\|z_i\|^2
=
\frac{2\lambda^2}{3},
\qquad
\langle z_i,z_j\rangle
=
-\frac{\lambda^2}{3}
\quad(i\ne j).
\end{equation}

Since \(x_i=\bar x+z_i\) and \(\|x_i\|=1\),
\[
1
=
\|\bar x\|^2
+
2\langle\bar x,z_i\rangle
+
\frac{2\lambda^2}{3}
\]
for every \(i\). Thus the three numbers
\[
\langle\bar x,z_1\rangle,\quad
\langle\bar x,z_2\rangle,\quad
\langle\bar x,z_3\rangle
\]
are equal. Their sum is
\[
\left\langle
\bar x,z_1+z_2+z_3
\right\rangle
=0,
\]
so each of them is zero. Moreover,
\[
Z^TZ=\lambda^2I_2
\]
shows that \(Z\) has rank two, and therefore the vectors
\(z_1,z_2,z_3\) span \(\mathbb R^2\). Hence \(\bar x\) is orthogonal to
all of \(\mathbb R^2\), which forces
\[
\bar x=0.
\]

It follows that \(x_i=z_i\). By
equation~\eqref{eq:centered-equilateral-relations} and
\(\|x_i\|=1\),
\[
1=\frac{2\lambda^2}{3},
\qquad\text{so}\qquad
\lambda^2=\frac32.
\]
For \(i\ne j\),
\[
\langle x_i,x_j\rangle
=
-\frac{\lambda^2}{3}
=
-\frac12.
\]
Therefore
\[
(a,b,c)
=
\left(-\frac12,-\frac12,-\frac12\right).
\]

Conversely, let
\[
A_{\mathrm{eq}}
=
A\left(-\frac12,-\frac12,-\frac12\right).
\]
Each row of \(A_{\mathrm{eq}}\) has the form
\[
\left(1,-\frac12,-\frac12\right)
\]
up to permutation, and therefore has sum
\[
1-\frac12-\frac12=0.
\]
Hence every row has mean zero, so row centering leaves
\(A_{\mathrm{eq}}\) unchanged. Every row has squared norm \(3/2\), and the
inner product of any two distinct rows is \(-3/4\). Hence their
Pearson correlation is
\[
\frac{-3/4}{3/2}
=
-\frac12.
\]
Therefore
\[
C(A_{\mathrm{eq}})=A_{\mathrm{eq}},
\]
so the equicorrelation point is indeed fixed.

\medskip
\noindent
\textbf{Subcase 2b: the vectors determine exactly two eigenlines.}

Since three vectors lie on two lines, two of them are collinear. After
a simultaneous permutation of the indices, which preserves fixedness
by Corollary~\ref{cor:T-equivariant}, we may assume that \(x_1\) and
\(x_2\) are collinear. By
equation~\eqref{eq:unit-gram-vectors}, both are unit vectors, so
\[
x_2=x_1
\qquad\text{or}\qquad
x_2=-x_1.
\]

Suppose first that \(x_2=x_1\). Set
\[
t=\langle x_1,x_3\rangle.
\]
Then
\[
A=A(1,t,t).
\]
Since \(A\) has rank two, \(x_3\) is not collinear with \(x_1\), and
therefore
\[
-1<t<1.
\]
The rows of \(A\) are
\[
r_1=r_2=(1,1,t),
\qquad
r_3=(t,t,1).
\]
Their centered forms are
\[
\widetilde r_1
=
\widetilde r_2
=
\frac{1-t}{3}(1,1,-2)
\]
and
\[
\widetilde r_3
=
-\frac{1-t}{3}(1,1,-2).
\]
Thus
\[
\operatorname{corr}(r_1,r_2)=1,
\qquad
\operatorname{corr}(r_1,r_3)
=
\operatorname{corr}(r_2,r_3)
=
-1,
\]
and hence
\[
T(1,t,t)=(1,-1,-1).
\]
This cannot equal \((1,t,t)\) for \(-1<t<1\). Therefore this subcase
contains no rank-two fixed point.

Suppose now that \(x_2=-x_1\). Set
\[
t=\langle x_2,x_3\rangle.
\]
Then
\[
\langle x_1,x_3\rangle=-t
\]
and
\[
A=A(-1,-t,t),
\qquad
-1<t<1.
\]
By Lemma~\ref{lem:mixed-invariant-curve},
\[
T(-1,-t,t)
=
\bigl(-1,-g(t),g(t)\bigr),
\]
where \(g\) is defined in equation~\eqref{eq:mixed-scalar-map}.

Fixedness is equivalent to
\[
g(t)=t.
\]
The value \(t=0\) is a solution. If \(t\ne0\), division by \(t\) gives
\[
\sqrt{3t^4+10t^2+3}=4,
\]
and hence
\[
3t^4+10t^2-13=0.
\]
Factoring,
\[
(t^2-1)(3t^2+13)=0.
\]
The only real solutions are
\[
t^2=1.
\]
These endpoint values correspond to rank-one matrices and were already
classified in Case~1. They are excluded from the present rank-two
case. Thus the unique rank-two solution in this subcase is
\[
t=0,
\]
which gives
\[
(-1,0,0).
\]
By permutation equivariance, the other two points in the
\(S_3\)-orbit of \((-1,0,0)\), namely
\[
(0,-1,0)
\qquad\text{and}\qquad
(0,0,-1),
\]
are also fixed.

Every fixed point has rank one or two, and the preceding cases cover
both possibilities. Together with the direct verification of the
listed matrices, this proves
equation~\eqref{eq:complete-fixed-point-set}.
\end{proof}

\begin{remark}[Dimension dependence of the classification]
\label{rem:dimension-dependence}
Proposition~\ref{prop:rank-bound} and the classification of
nondegenerate sign-valued fixed points are valid for arbitrary
dimension. The complete classification proved above, however, uses
features specific to \(n=3\).

First, Proposition~\ref{prop:rank-bound} restricts every fixed point
to rank one or two. In the rank-two case, the associated linear
operator acts on \(\mathbb R^2\), where three distinct eigenlines force
the operator to be scalar. Moreover, the two columns of the centered Gram factor \(H_3X\) span
the two-dimensional space
\[
\mathbf1^\perp\subset\mathbb R^3.
\]
For \(n\ge4\), Proposition~\ref{prop:rank-bound} does not exclude
fixed points of ranks \(2,\ldots,n-1\), and the two-dimensional
arguments used above no longer yield a complete classification.
\end{remark}

The fixed points decompose into three permutation orbits:
\begin{equation}\label{eq:fixed-point-orbit-decomposition}
\begin{aligned}
\mathcal O_{\mathrm{eq}}
&=
\left\{
\left(-\frac12,-\frac12,-\frac12\right)
\right\},
\\
\mathcal O_{\mathrm{pat}}
&=
\{
(1,-1,-1),
(-1,1,-1),
(-1,-1,1)
\},
\\
\mathcal O_{\mathrm{mix}}
&=
\{
(-1,0,0),
(0,-1,0),
(0,0,-1)
\}.
\end{aligned}
\end{equation}

The first orbit has size \(1\), because all three coordinates are equal.
The patterned and mixed orbits each have size \(3\), because exactly two
coordinates coincide and the exceptional coordinate can occupy any of
the three positions. Hence
\[
|\mathcal O_{\mathrm{eq}}|=1,
\qquad
|\mathcal O_{\mathrm{pat}}|=3,
\qquad
|\mathcal O_{\mathrm{mix}}|=3.
\]
Thus the seven fixed points form one orbit of size one and two orbits of
size three. Although a general point with pairwise distinct coordinates
has an orbit of size \(6\), no orbit of size \(6\) occurs in the
fixed-point set.

By Corollary~\ref{cor:similar_jacobians}, all fixed points in the same
orbit have the same Jacobian spectrum. Therefore the orbit decomposition
reduces the stability analysis from seven fixed points to three orbit
representatives.

For reference in the geometry, stability, and basin analyses, we use the
labels listed in Table~\ref{tab:fixed-point-labels}.

\begin{table}[htbp]
\centering
\caption{Labels, families, and ranks of the seven analytically
classified fixed points.}
\label{tab:fixed-point-labels}
\begin{tabular}{cllc}
\toprule
Label & Family & Coordinates \((a,b,c)\) & Rank\\
\midrule
FP1 & Patterned
& \((1,-1,-1)\) & \(1\)\\
FP2 & Mixed
& \((0,0,-1)\) & \(2\)\\
FP3 & Patterned
& \((-1,1,-1)\) & \(1\)\\
FP4 & Mixed
& \((0,-1,0)\) & \(2\)\\
FP5 & Equicorrelation
& \(\left(-\frac12,-\frac12,-\frac12\right)\) & \(2\)\\
FP6 & Patterned
& \((-1,-1,1)\) & \(1\)\\
FP7 & Mixed
& \((-1,0,0)\) & \(2\)\\
\bottomrule
\end{tabular}
\end{table}

\section{Geometry of Fixed Points}\label{sec:geometry}

Figure~\ref{fig:fixed-points-3d} displays all seven fixed points
embedded in the elliptope, with marker shape and color indicating
fixed-point family and matrix rank. The elliptope boundary can be
written as
\[
c
=
ab
\pm
\sqrt{(1-a^2)(1-b^2)}.
\]

\begin{figure}[!htbp]
\centering
\includegraphics[
width=\textwidth,
height=0.55\textheight,
keepaspectratio
]{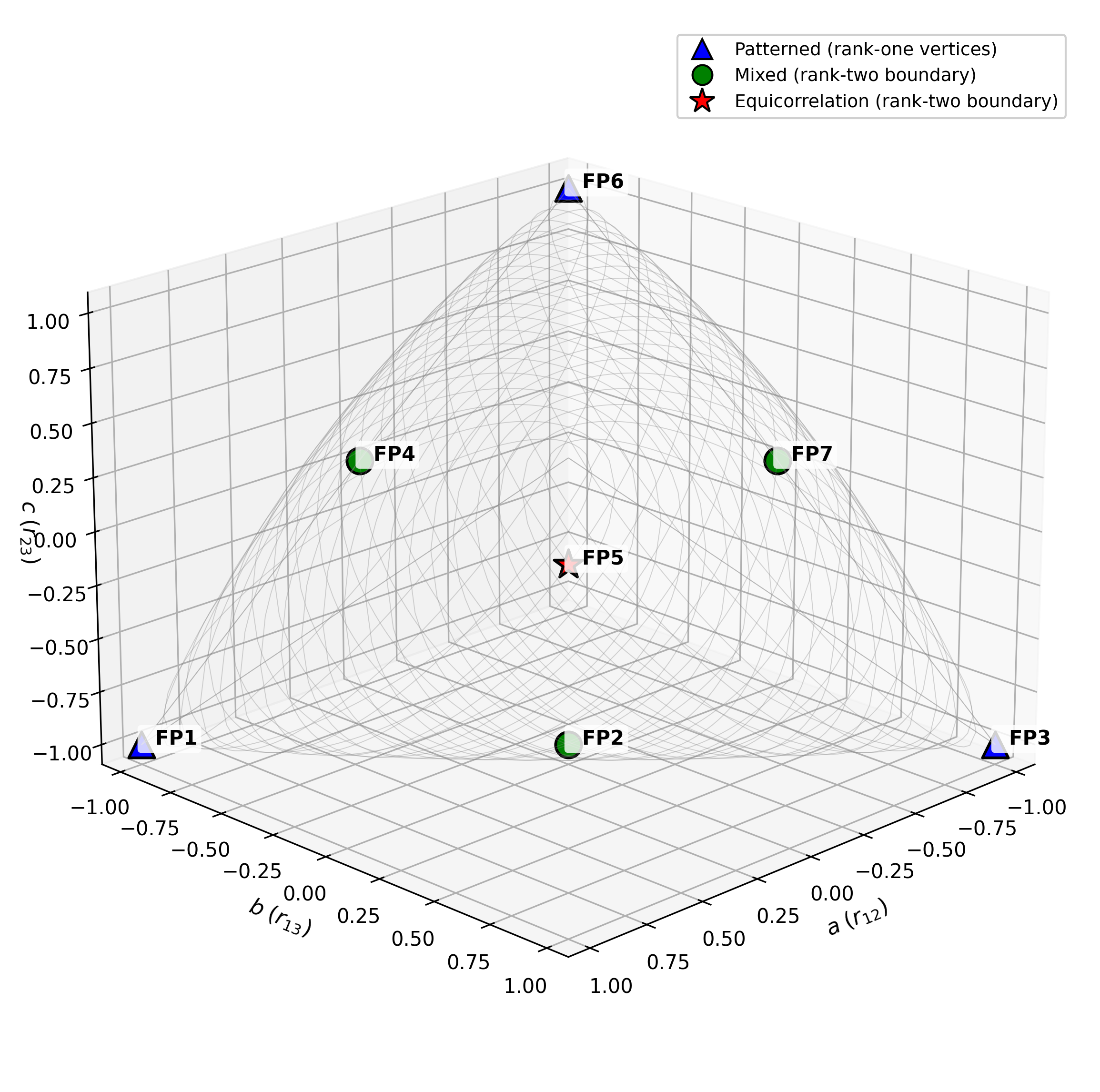}
\caption{The complete fixed-point set in the three-dimensional
elliptope. The patterned fixed points FP1 \(=(1,-1,-1)\),
FP3 \(=(-1,1,-1)\), and FP6 \(=(-1,-1,1)\), shown as blue
triangles, are rank-one vertices. The mixed points
FP2 \(=(0,0,-1)\), FP4 \(=(0,-1,0)\), and FP7 \(=(-1,0,0)\),
shown as green circles, and the equicorrelation point
FP5 \(=\left(-\frac12,-\frac12,-\frac12\right)\), shown as a red
star, are rank-two boundary points. The degenerate rank-one vertex
\((1,1,1)\) is not marked as a fixed point because the centered rows
vanish there and the correlation map is undefined.}
\label{fig:fixed-points-3d}
\end{figure}

\begin{remark}
Proposition~\ref{prop:rank-bound} explains why every fixed point lies on
the boundary. The complete classification exhibits two matrix-rank
strata: the patterned points are rank-one vertices, whereas the mixed
points and the equicorrelation point are rank-two boundary points. The relative Lyapunov stability classification below shows that only
the patterned rank-one vertices are locally asymptotically stable
among the seven fixed points. Geometric aspects and
rank-reducibility of correlation matrices are discussed, for example,
in \cite{shapiro1982}.
\end{remark}

\subsection{Relation to Chen's Stationary Patterns}

Chen~\cite[p.~14]{chen2002generalized} states that his Figure~5 gives
all stationary points of the iterated correlation map for \(p=3\) and
\(p=4\). For \(p=3\), the displayed stationary configurations fall
into three structural types: the symmetric equicorrelation pattern
\(3(1)\), the mixed pattern \(3(2)\), and the rank-one sign-valued
pattern \(3(3)\); see also
\cite[pp.~16--17]{chen2002generalized}.

In the coordinate representation used here, these three types
correspond respectively to
\[
\left(-\frac12,-\frac12,-\frac12\right),
\]
the three mixed points
\[
(-1,0,0),\qquad
(0,-1,0),\qquad
(0,0,-1),
\]
and the three patterned points
\[
(1,-1,-1),\qquad
(-1,1,-1),\qquad
(-1,-1,1).
\]
Thus the fixed-point set obtained in
Theorem~\ref{thm:complete-classification} agrees with Chen's
\(p=3\) stationary patterns. The theorem provides a self-contained
analytical proof in elliptope coordinates that these seven
configurations exhaust the fixed-point set of \(T\) on
\(\mathcal E_3^{\mathrm{nd}}\).

\subsection{A Sign-Pattern Selection Phenomenon}

Among the eight triples in \(\{-1,1\}^3\), the
positive-semidefiniteness condition in
equation~\eqref{eq:elliptope-e3} is satisfied precisely by
\[
(1,1,1),
\qquad
(1,-1,-1),
\qquad
(-1,1,-1),
\qquad
(-1,-1,1).
\]
Indeed, for a sign triple,
\[
1+2abc-a^2-b^2-c^2
=
2(abc-1),
\]
so positive semidefiniteness is equivalent to \(abc=1\).

The point \((1,1,1)\) is excluded from the domain of \(T\), because
all three centered rows vanish. Consequently, the only nondegenerate
positive-semidefinite sign triples are
\[
(1,-1,-1),
\qquad
(-1,1,-1),
\qquad
(-1,-1,1).
\]

The remaining four sign triples,
\[
(-1,-1,-1),
\qquad
(-1,1,1),
\qquad
(1,-1,1),
\qquad
(1,1,-1),
\]
fail the determinant condition. This is the concrete \(n=3\)
realization of Proposition~\ref{prop:sign-fixed-points}.

\subsection{The Degenerate Case}

The point \((1,1,1)\) corresponds to a matrix whose centered rows
vanish identically. Consequently, the Pearson correlation coefficients
are undefined, and the point is excluded from the domain of \(T\).

\section{Relative Lyapunov Stability Analysis}
\label{sec:local-stability}

For a square matrix \(M\in\mathbb R^{m\times m}\), we write
\[
\operatorname{spec}(M)
\]
for its spectrum,
\[
\chi_M(\lambda)=\det(\lambda I_m-M)
\]
for its characteristic polynomial, and
\[
\rho(M)
=
\max\{|\lambda|:\lambda\in\operatorname{spec}(M)\}
\]
for its spectral radius.

\begin{definition}
A fixed point \(p\in\mathcal E_3^{\mathrm{nd}}\) is
\emph{Lyapunov stable relative to \(\mathcal E_3^{\mathrm{nd}}\)} if,
for every relative neighborhood \(V\) of \(p\), there exists a relative
neighborhood \(U\) of \(p\) such that every orbit starting in \(U\) is
defined for all forward iterates and remains in \(V\).

It is \emph{locally asymptotically stable relative to
\(\mathcal E_3^{\mathrm{nd}}\)} if it is Lyapunov stable and every orbit
from some relative neighborhood converges to \(p\). A fixed point that
is not Lyapunov stable is called \emph{unstable}.
\end{definition}

We study relative Lyapunov stability using the Jacobian matrix defined in
equation~\eqref{eq:jacobian-definition}. At each fixed point listed in
Theorem~\ref{thm:complete-classification}, the denominators in
equation~\eqref{eq:T-components} are nonzero. Consequently, the
coordinate formulas for \(T\) define a \(C^1\) map on an open
neighborhood of that point in \(\mathbb R^3\).

The standard linearization criterion for discrete dynamical systems
implies that a fixed point \(p\) is locally asymptotically stable if
every eigenvalue of \(DT(p)\) has modulus strictly less than \(1\); see
\cite{hale2009dynamics,katok1995introduction}. This criterion applies
directly to the patterned fixed points.

The mixed and equicorrelation points lie on the boundary of the
elliptope, so an expanding ambient eigenvalue alone does not imply
instability relative to \(\mathcal E_3^{\mathrm{nd}}\). We therefore
exhibit invariant curves contained in the elliptope boundary along
which perturbations within the domain move away from the corresponding
fixed points.

By Corollary~\ref{cor:similar_jacobians}, it is sufficient to evaluate
the Jacobian at
\[
p_{\mathrm{pat}}
=
(-1,-1,1),
\qquad
p_{\mathrm{mix}}
=
(-1,0,0),
\qquad
p_{\mathrm{eq}}
=
\left(-\frac12,-\frac12,-\frac12\right).
\]

\begin{theorem}[Complete relative Lyapunov stability classification]
\label{thm:stability}
Relative to \(\mathcal E_3^{\mathrm{nd}}\), the three points of
\(\mathcal O_{\mathrm{pat}}\) are locally asymptotically stable, whereas
the four points of
\(\mathcal O_{\mathrm{eq}}\cup\mathcal O_{\mathrm{mix}}\)
are Lyapunov unstable.
\end{theorem}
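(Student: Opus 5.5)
By Corollary~\ref{cor:similar_jacobians} and Corollary~\ref{cor:T-equivariant}, stability type is constant on each $S_3$-orbit. It therefore suffices to treat the three representatives $p_{\mathrm{pat}}=(-1,-1,1)$, $p_{\mathrm{mix}}=(-1,0,0)$ and $p_{\mathrm{eq}}=(-\frac12,-\frac12,-\frac12)$. The plan has two halves: a spectral argument for the patterned points and invariant boundary curves for the other two families.

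\textbf{Patterned points.} I will differentiate the explicit formulas in equation~\eqref{eq:T-components} at $p_{\mathrm{pat}}$. The denominators are nonzero there, so $T$ is smooth on an open ambient neighborhood. My expectation is that $DT(p_{\mathrm{pat}})$ has spectral radius strictly less than $1$, possibly even $DT(p_{\mathrm{pat}})=0$, since a rank-one sign point is a strongly attracting configuration.

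To check this, I will parametrize $p=p_{\mathrm{pat}}+\varepsilon$ and expand numerators and denominators to first order. Alternatively, I will use the Gram form $C(A)=\Delta^{-1}AH_3A\Delta^{-1}$ and note that a first-order perturbation of $ss^T$ changes the normalized centered rows only in directions that cancel to first order. Once $\rho(DT(p_{\mathrm{pat}}))<1$ is established, the ambient linearization criterion gives a neighborhood $U\subset\mathbb R^3$ and a norm in which $T$ is a contraction toward $p_{\mathrm{pat}}$. Intersecting with $\mathcal E_3^{\mathrm{nd}}$, which is forward invariant by Theorem~\ref{thm:forward-invariance} and Remark~\ref{rem:nondegenerate-domain}, yields relative Lyapunov stability and local asymptotic stability.

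\textbf{Mixed points.} Ambient eigenvalues alone are insufficient here, so I will use Lemma~\ref{lem:mixed-invariant-curve}. The curve $\gamma_{\mathrm{mix}}(t)=(-1,-t,t)$ lies in $\mathcal E_3^{\mathrm{nd}}$, passes through $p_{\mathrm{mix}}$ at $t=0$, and carries the dynamics $t\mapsto g(t)$. From equation~\eqref{eq:mixed-scalar-map} we have $g'(0)=4/\sqrt3=\tfrac{4\sqrt3}{3}>1$. More globally, for $0<|t|<1$ we have $|g(t)|>|t|$, because $16>3t^4+10t^2+3$ is equivalent to $(1-t^2)(3t^2+13)>0$. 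Thus every orbit starting at $\gamma_{\mathrm{mix}}(t)$ with $t\neq0$ small increases $|t|$ monotonically. It cannot converge to $0$, and it must leave a fixed small neighborhood; otherwise it would converge to a fixed point of $g$ in $(0,\delta]$, and none exists. Such starting points exist arbitrarily close to $p_{\mathrm{mix}}$, which gives instability.

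\textbf{Equicorrelation point.} I will look for an analogous invariant boundary curve through $p_{\mathrm{eq}}$ realizing the multiplier $\tfrac32$. Since $p_{\mathrm{eq}}$ has rank two, the natural candidate is a curve of rank-two matrices $A=XX^T$ with unit rows $x_i=(\cos\theta_i,\sin\theta_i)$. I will keep a symmetry fixing index $3$, which gives matrices of the form $A(a,b,b)$ on the boundary $1+2ab^2-a^2-2b^2=0$, that is, $a=2b^2-1$. This is the one-parameter family $\gamma_{\mathrm{eq}}(b)=(2b^2-1,b,b)$, passing through $p_{\mathrm{eq}}$ at $b=-\tfrac12$.

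The invariance under $T$ follows from two facts. First, the transposition swapping indices $1,2$ fixes points with equal second and third coordinates, so by Corollary~\ref{cor:T-equivariant} $T$ preserves the plane $\{b=c\}$. Second, $T$ maps rank-two matrices to rank at most two. The rank-one images lie only at the patterned points, so the image of $\gamma_{\mathrm{eq}}$ stays on this boundary curve near $p_{\mathrm{eq}}$.

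I will then compute the induced scalar map $h(b)=T_2(2b^2-1,b,b)$ from equation~\eqref{eq:T-components} and verify that $h'(-\tfrac12)=\tfrac32$. I will also check that $|h(b)+\tfrac12|>|b+\tfrac12|$ on a punctured neighborhood, either by an explicit sign analysis of $h(b)-b$ or by using $h'>1$ near $-\tfrac12$ together with the absence of other fixed points of $h$ nearby, which Theorem~\ref{thm:complete-classification} guarantees. Instability then follows exactly as in the mixed case.

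The main obstacle is this last step: deriving a manageable closed form for $h$ and proving the local repelling inequality. I would first try the angle parametrization, in which centered rows and their norms become trigonometric polynomials, to obtain $h$ cleanly. If the rank-one endpoint issue complicates invariance, I will restrict to a small arc around $b=-\tfrac12$ where the image stays rank two by continuity.
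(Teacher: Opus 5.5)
Your proposal is correct and follows the paper's proof. The patterned Jacobian is in fact identically zero, and the mixed case uses the same curve $(-1,-t,t)$ with $g'(0)=4\sqrt3/3$. Your curve $(2b^2-1,b,b)$ is a coordinate permutation of the paper's $\gamma_{\mathrm{eq}}(x)=(x,x,2x^2-1)$. Direct substitution gives the induced map $h(b)=b/\sqrt{4b^2+6b+3}$ with $h'(-\tfrac12)=\tfrac32$, so the closed form you flagged as the main obstacle is easy to obtain.

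One small correction to your invariance argument: on the plane $\{b=c\}$ one has $1+2ab^2-a^2-2b^2=-(a-1)(a+1-2b^2)$. The singular locus there therefore also contains the segment $a=1$ of rank-two matrices. What places $T(\gamma_{\mathrm{eq}}(b))$ on the branch $a=2b^2-1$ is continuity at $p_{\mathrm{eq}}$ (its first coordinate stays near $-\tfrac12$), not the remark about rank-one images.
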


\begin{proof}
At the patterned representative, direct differentiation of
equation~\eqref{eq:T-components} gives
\begin{equation}\label{eq:jacobian-patterned}
DT(p_{\mathrm{pat}})
=
\begin{pmatrix}
0&0&0\\
0&0&0\\
0&0&0
\end{pmatrix},
\qquad
\chi_{\mathrm{pat}}(\lambda)
:=
\chi_{DT(p_{\mathrm{pat}})}(\lambda)
=
\lambda^3.
\end{equation}
Equation~\eqref{eq:jacobian-patterned} gives
\[
\rho(DT(p_{\mathrm{pat}}))
=
0<1.
\]
The ambient extension is locally asymptotically stable at
\(p_{\mathrm{pat}}\). It agrees with \(T\) on the elliptope, and a
sufficiently small neighborhood of \(p_{\mathrm{pat}}\) is disjoint
from the excluded vertex \((1,1,1)\). Therefore
\(p_{\mathrm{pat}}\) is locally asymptotically stable relative to
\(\mathcal E_3^{\mathrm{nd}}\).

At the mixed representative, direct differentiation of
equation~\eqref{eq:T-components} gives
\begin{equation}\label{eq:mixed-linearization-data}
\begin{gathered}
DT(p_{\mathrm{mix}})
=
\begin{pmatrix}
0&0&0\\[0.5ex]
-\frac{\sqrt3}{6}&\frac{5\sqrt3}{6}&-\frac{\sqrt3}{2}\\[0.5ex]
-\frac{\sqrt3}{6}&-\frac{\sqrt3}{2}&\frac{5\sqrt3}{6}
\end{pmatrix},\\[1ex]
\chi_{\mathrm{mix}}(\lambda)
:=
\chi_{DT(p_{\mathrm{mix}})}(\lambda)
=
\frac{\lambda(3\lambda^2-5\sqrt3\,\lambda+4)}{3},
\qquad
\operatorname{spec}\bigl(DT(p_{\mathrm{mix}})\bigr)
=
\left\{
0,\frac{\sqrt3}{3},\frac{4\sqrt3}{3}
\right\}.
\end{gathered}
\end{equation}

The spectrum in equation~\eqref{eq:mixed-linearization-data} contains
the expanding eigenvalue \(4\sqrt3/3\). This expansion is realized by
an admissible direction in the elliptope. Consider
\[
\gamma_{\mathrm{mix}}(t)
=
(-1,-t,t).
\]
For \(|t|<1\),
\[
\gamma_{\mathrm{mix}}(t)
\in
\mathcal E_3^{\mathrm{nd}},
\]
and the determinant condition holds with equality.
Lemma~\ref{lem:mixed-invariant-curve} gives
\[
T(\gamma_{\mathrm{mix}}(t))
=
\gamma_{\mathrm{mix}}(g(t)).
\]
Moreover,
\[
g(0)=0,
\qquad
g'(0)
=
\frac4{\sqrt3}
=
\frac{4\sqrt3}{3}
>1.
\]

By continuity of \(g'\) and the mean value theorem, there exist
\(\eta>0\) and \(\mu>1\) such that
\[
|g(t)|\ge \mu|t|
\qquad
\text{whenever }0<|t|<\eta.
\]
Shrinking \(\eta\) if necessary, assume also that \(\eta<1\), so that
\(\gamma_{\mathrm{mix}}(t)\in\mathcal E_3^{\mathrm{nd}}\) throughout this
parameter interval. Fix
\[
\varepsilon_0=\frac{\eta}{2}.
\]
Let \(U\) be any relative neighborhood of \(p_{\mathrm{mix}}\) in
\(\mathcal E_3^{\mathrm{nd}}\). Since
\(\gamma_{\mathrm{mix}}(t)\to p_{\mathrm{mix}}\) as \(t\to0\), one may choose
\(t_0\ne0\) so small that
\[
\gamma_{\mathrm{mix}}(t_0)\in U,
\qquad
0<|t_0|<\varepsilon_0.
\]
Set \(t_{j+1}=g(t_j)\). As long as \(|t_j|<\varepsilon_0\), one has
\[
|t_{j+1}|
\ge
\mu |t_j|.
\]
Hence, after finitely many iterates,
\[
|t_j|\ge\varepsilon_0.
\]
Since
\[
\|\gamma_{\mathrm{mix}}(t_j)-p_{\mathrm{mix}}\|
=
\sqrt2\,|t_j|,
\]
the orbit leaves the fixed relative neighborhood
\[
B_{\sqrt2\,\varepsilon_0}(p_{\mathrm{mix}})
\cap
\mathcal E_3^{\mathrm{nd}}.
\]
Thus arbitrarily close admissible initial conditions leave a fixed
neighborhood of \(p_{\mathrm{mix}}\), proving Lyapunov instability relative
to \(\mathcal E_3^{\mathrm{nd}}\).

At the equicorrelation representative, direct differentiation of equation~\eqref{eq:T-components} gives
\begin{equation}\label{eq:equicorrelation-linearization-data}
\begin{gathered}
DT(p_{\mathrm{eq}})
=
\begin{pmatrix}
1&-\frac12&-\frac12\\
-\frac12&1&-\frac12\\
-\frac12&-\frac12&1
\end{pmatrix},\\[1ex]
\chi_{\mathrm{eq}}(\lambda)
=
\frac{\lambda(2\lambda-3)^2}{4},
\qquad
\operatorname{spec}\bigl(DT(p_{\mathrm{eq}})\bigr)
=
\left\{
0,\frac32,\frac32
\right\}.
\end{gathered}
\end{equation}

The spectrum in
equation~\eqref{eq:equicorrelation-linearization-data} contains the
expanding eigenvalue \(3/2\). Consider the boundary curve
\[
\gamma_{\mathrm{eq}}(x)
=
\left(
x,x,2x^2-1
\right).
\]
For \(x\) near \(-\frac12\), this curve belongs to
\(\mathcal E_3^{\mathrm{nd}}\), and the determinant condition holds
with equality. Direct substitution gives
\begin{equation}\label{eq:equicorrelation-invariant-map}
T(\gamma_{\mathrm{eq}}(x))
=
\gamma_{\mathrm{eq}}(f(x)),
\qquad
f(x)
=
\frac{x}
{\sqrt{4x^2+6x+3}}.
\end{equation}
Equation~\eqref{eq:equicorrelation-invariant-map} gives, at the
equicorrelation fixed point,
\[
f\left(-\frac12\right)
=
-\frac12,
\qquad
f'\left(-\frac12\right)
=
\frac32
>1.
\]

By continuity of \(f'\) and the mean value theorem, there exist
\(\eta>0\) and \(\mu>1\) such that
\[
\left|f(x)+\frac12\right|
\ge
\mu\left|x+\frac12\right|
\]
whenever
\[
0<\left|x+\frac12\right|<\eta.
\]
Shrinking \(\eta\) if necessary, assume that
\(\gamma_{\mathrm{eq}}(x)\in\mathcal E_3^{\mathrm{nd}}\) whenever
\(\left|x+\frac12\right|<\eta\). Fix
\[
\varepsilon_0=\frac{\eta}{2}.
\]
Let \(U\) be any relative neighborhood of \(p_{\mathrm{eq}}\) in
\(\mathcal E_3^{\mathrm{nd}}\). Since
\(\gamma_{\mathrm{eq}}(x)\to p_{\mathrm{eq}}\) as
\(x\to-\frac12\), choose \(x_0\ne-\frac12\) sufficiently close to
\(-\frac12\) that
\[
\gamma_{\mathrm{eq}}(x_0)\in U,
\qquad
0<\left|x_0+\frac12\right|<\varepsilon_0.
\]
Set \(x_{j+1}=f(x_j)\). As long as
\(\left|x_j+\frac12\right|<\varepsilon_0\),
\[
\left|x_{j+1}+\frac12\right|
\ge
\mu\left|x_j+\frac12\right|.
\]
Hence, after finitely many iterates,
\[
\left|x_j+\frac12\right|
\ge
\varepsilon_0.
\]
The first two coordinates of
\(\gamma_{\mathrm{eq}}(x_j)-p_{\mathrm{eq}}\) are both
\(x_j+\frac12\), so
\[
\|\gamma_{\mathrm{eq}}(x_j)-p_{\mathrm{eq}}\|
\ge
\sqrt2\left|x_j+\frac12\right|
\ge
\sqrt2\,\varepsilon_0.
\]
Thus the orbit leaves the fixed relative neighborhood
\[
B_{\sqrt2\,\varepsilon_0}(p_{\mathrm{eq}})
\cap
\mathcal E_3^{\mathrm{nd}}.
\]
Therefore arbitrarily close admissible initial conditions leave a fixed
neighborhood of \(p_{\mathrm{eq}}\), proving Lyapunov instability relative
to \(\mathcal E_3^{\mathrm{nd}}\).

The remaining points in the three orbits are obtained from the
representatives by permutation symmetry. Since coordinate permutations
preserve \(\mathcal E_3^{\mathrm{nd}}\) and \(T\) is equivariant under
the \(S_3\)-action, they carry relative neighborhoods, forward orbits,
and invariant boundary curves to their corresponding permuted objects.
Therefore local asymptotic stability and Lyapunov instability are
constant on each permutation orbit. This transfers the three
conclusions to every member of the corresponding orbit.
\end{proof}

Combining Theorems~\ref{thm:complete-classification}
and~\ref{thm:stability}, the locally asymptotically stable fixed points
of \(T\) are precisely
\[
(1,-1,-1),
\qquad
(-1,1,-1),
\qquad
(-1,-1,1).
\]
The four points in
\(\mathcal O_{\mathrm{eq}}\cup\mathcal O_{\mathrm{mix}}\)
are unstable.

\begin{remark}\label{rem:local-not-global}
Theorems~\ref{thm:complete-classification}
and~\ref{thm:stability} are local and stationary statements. Forward
well-posedness follows separately from Theorem~\ref{thm:forward-invariance}.
Neither the fixed-point classification nor the relative Lyapunov stability
classification alone excludes nontrivial \(\omega\)-limit sets. The global
convergence question is resolved in Section~\ref{sec:global-convergence} by a
projective kernel analysis of rank-two iterates.
\end{remark}

\begin{remark}\label{rem:symbolic-jacobians}
The Jacobian matrices, characteristic polynomials, eigenvalues, and
identities for the invariant curves above are obtained directly from the displayed
coordinate formulas. These exact calculations were also checked symbolically
with SymPy as part of the computational validation.
The spectral radii of the patterned, mixed, and equicorrelation
representatives are
\[
0,
\qquad
\frac{4\sqrt3}{3},
\qquad
\frac32,
\]
respectively.
\end{remark}

\section{Global Convergence in Dimension Three}
\label{sec:global-convergence}

The rank identity reduces the global problem sharply in dimension three.
For every \(P\in\mathcal C_3^{\mathrm{nd}}\),
\[
\operatorname{rank}C(P)
=
\operatorname{rank}(PH_3)
\le2.
\]
Thus every admissible matrix trajectory enters the singular boundary of the
elliptope after one iteration. The remaining rank-two dynamics can be
encoded by the one-dimensional kernel of the current correlation matrix.
We first make this coordinate precise.

\begin{definition}[Projective kernel coordinate and normalization factors]
\label{def:projective-kernel-coordinate}
Let \(\mathbb P^2(\mathbb R)\) denote the real projective plane, that is,
the set of one-dimensional linear subspaces of \(\mathbb R^3\). For
\(q\in\mathbb R^3\setminus\{0\}\), write
\[
[q]=\operatorname{span}\{q\}\in\mathbb P^2(\mathbb R)
\]
for the corresponding projective point.

Let \(P\in\mathcal C_3^{\mathrm{nd}}\) have rank two. Since
\(\ker P\) is one-dimensional, its \emph{projective kernel coordinate} is
\[
\kappa(P)=[q],
\]
where \(q\ne0\) is any vector spanning \(\ker P\). For such a
representative define
\[
s(q)=q^T\mathbf1=q_1+q_2+q_3.
\]
For every \(P\in\mathcal C_3^{\mathrm{nd}}\), define the positive
Pearson normalization factors
\[
d_i(P)=\|P_{i,:}H_3\|,
\qquad i=1,2,3,
\]
and the diagonal matrix
\[
D(P)=\operatorname{diag}\bigl(d_1(P),d_2(P),d_3(P)\bigr).
\]
The matrix \(D(P)\) is invertible because
\(P\in\mathcal C_3^{\mathrm{nd}}\).
\end{definition}

\begin{lemma}[Kernel reconstruction on the rank-two boundary]
\label{lem:kernel-reconstruction}
Let \((a,b,c)\in\mathcal E_3^{\mathrm{nd}}\), set
\(P=A(a,b,c)\), and assume that \(P\) has rank two. Let
\(q=(q_1,q_2,q_3)^T\ne0\) span \(\ker P\). If
\(q_1q_2q_3\ne0\), then
\begin{align}
 a&=\frac{q_3^2-q_1^2-q_2^2}{2q_1q_2},
 \label{eq:kernel-reconstruct-a}\\
 b&=\frac{q_2^2-q_1^2-q_3^2}{2q_1q_3},
 \label{eq:kernel-reconstruct-b}\\
 c&=\frac{q_1^2-q_2^2-q_3^2}{2q_2q_3}.
 \label{eq:kernel-reconstruct-c}
\end{align}
Consequently, on the locus \(q_1q_2q_3\ne0\), the projective kernel
coordinate \(\kappa(P)=[q]\) determines \(P\) uniquely.
\end{lemma}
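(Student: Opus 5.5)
The plan is to write out the kernel equation \(Pq=0\) coordinatewise and solve the resulting linear system for \((a,b,c)\). With \(P=A(a,b,c)\), the condition \(Pq=0\) reads
\[
q_1+aq_2+bq_3=0,\qquad aq_1+q_2+cq_3=0,\qquad bq_1+cq_2+q_3=0.
\]
This is a linear system in the three unknowns \(a,b,c\), with coefficient matrix
\[
\begin{pmatrix} q_2&q_3&0\\ q_1&0&q_3\\ 0&q_1&q_2\end{pmatrix}
\]
and right-hand side \(-(q_1,q_2,q_3)^T\). Its determinant is \(-2q_1q_2q_3\), which is nonzero on the locus \(q_1q_2q_3\ne0\). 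Hence \((a,b,c)\) is uniquely determined by \(q\), and in particular by the line \([q]\), since the system is homogeneous of degree zero in \(q\) after solving.

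To obtain the explicit formulas, rather than invoking Cramer's rule blindly, I would eliminate variables. Multiplying the three equations by \(q_1,q_2,q_3\) respectively gives
\[
q_1^2+aq_1q_2+bq_1q_3=0,\qquad aq_1q_2+q_2^2+cq_2q_3=0,\qquad bq_1q_3+cq_2q_3+q_3^2=0.
\]
Now set \(\alpha=aq_1q_2\), \(\beta=bq_1q_3\), \(\gamma=cq_2q_3\), so that \(\alpha+\beta=-q_1^2\), \(\alpha+\gamma=-q_2^2\), and \(\beta+\gamma=-q_3^2\). Solving this system gives \(\alpha=(q_3^2-q_1^2-q_2^2)/2\), and analogously for \(\beta\) and \(\gamma\). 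Dividing by the nonzero products \(q_1q_2\), \(q_1q_3\), and \(q_2q_3\) yields equations \eqref{eq:kernel-reconstruct-a}--\eqref{eq:kernel-reconstruct-c}.

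Finally, the formulas are invariant under \(q\mapsto\lambda q\) for \(\lambda\ne0\), because both numerator and denominator are quadratic in \(q\). They therefore depend only on \(\kappa(P)\). Since \(P\) is determined by \((a,b,c)\), this gives the uniqueness claim: any two rank-two matrices in \(\mathcal C_3^{\mathrm{nd}}\) with the same kernel line, on this locus, have the same off-diagonal entries. No step is a real obstacle. The only point needing care is the observation that the multiplied system is equivalent to the original one, which holds because each \(q_i\ne0\); this is exactly where the hypothesis \(q_1q_2q_3\ne0\) is used. The rank-two and nondegeneracy hypotheses serve only to make \(\kappa(P)\) well defined.
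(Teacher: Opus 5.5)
Your argument is correct, but it takes a different route from the paper's. The paper chooses a Gram factor $P=XX^T$ with unit rows, so that $\ker P=\ker X^T$ becomes the vector relation $q_1x_1+q_2x_2+q_3x_3=0$. It then applies the law of cosines: squaring $q_1x_1+q_2x_2=-q_3x_3$ gives $q_1^2+q_2^2+2aq_1q_2=q_3^2$ directly, and the other two formulas follow by cyclic symmetry. You instead treat $Pq=0$ as a linear system in $(a,b,c)$. Taking (first) $+$ (second) $-$ (third) of your multiplied equations gives exactly the paper's identity $2aq_1q_2=q_3^2-q_1^2-q_2^2$, so the two derivations reach the same relation.

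Your version is more elementary and slightly more general. It never uses positive semidefiniteness or a Gram factor, so it shows that any symmetric unit-diagonal matrix annihilating such a $q$ must have these off-diagonal entries. The determinant $-2q_1q_2q_3$ also makes uniqueness transparent. The paper's version gives geometric meaning instead: the $|q_i|$ are the side lengths of the triangle formed by the vectors $q_ix_i$. The next lemma on triangle constraints uses exactly this.

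One small correction to your commentary. Passing from the original equations to the multiplied ones needs no hypothesis, and the $(\alpha,\beta,\gamma)$ system is always uniquely solvable. The hypothesis $q_1q_2q_3\ne0$ enters only when you divide by $q_1q_2$, $q_1q_3$, $q_2q_3$, which is equivalent to your determinant being nonzero.
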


\begin{proof}
Choose a full-column-rank Gram factor
\[
P=XX^T,
\qquad
X\in\mathbb R^{3\times2},
\]
whose rows are unit vectors \(x_1^T,x_2^T,x_3^T\). Since
\(\ker P=\ker X^T\),
\[
q_1x_1+q_2x_2+q_3x_3=0.
\]
Taking squared norms in
\(q_1x_1+q_2x_2=-q_3x_3\) gives
\[
q_1^2+q_2^2+2q_1q_2\langle x_1,x_2\rangle=q_3^2.
\]
Because \(a=\langle x_1,x_2\rangle\), this yields
\eqref{eq:kernel-reconstruct-a}. The other two identities follow by cyclic
permutation. Each right-hand side is homogeneous of degree zero in \(q\),
so the reconstructed matrix depends only on \([q]\).
\end{proof}

\begin{proposition}[Exact projective kernel dynamics]
\label{prop:projective-kernel-dynamics}
Let \(P\in\mathcal C_3^{\mathrm{nd}}\) have rank two, let
\(q=(q_1,q_2,q_3)^T\ne0\) span \(\ker P\), and use the notation of
Definition~\ref{def:projective-kernel-coordinate}. Choose any
full-column-rank Gram factor
\[
P=XX^T,
\qquad
X\in\mathbb R^{3\times2},
\]
and set
\[
S=X^TH_3X.
\]
Then the following statements hold.

\begin{enumerate}
\item \(s(q)=0\) if and only if \(\operatorname{rank}C(P)=1\). In this
case \(C(P)\) is one of the three nondegenerate rank-one patterned fixed
points.

\item If \(s(q)\ne0\), then \(S\succ0\),
\(\operatorname{rank}C(P)=2\), and
\begin{equation}\label{eq:kernel-update}
\kappa(C(P))=[D(P)q].
\end{equation}
Equivalently, if \(q^+\) denotes the representative of
\(\ker C(P)\) chosen by this rule, then
\[
q^+=D(P)q.
\]

\item If \(s(q)\ne0\), then for every \(i,j\in\{1,2,3\}\),
\begin{equation}\label{eq:kernel-order-identity}
 d_i(P)^2-d_j(P)^2
 =
 \frac{2\det(S)}{s(q)}(q_i-q_j).
\end{equation}
The scalar \(\det(S)\) is independent of the chosen full-column-rank Gram
factor \(X\).

\item After replacing \(q\) by \(-q\) if necessary so that \(s(q)>0\),
\begin{equation}\label{eq:kernel-order-equivalence}
q_i>q_j
\quad\Longleftrightarrow\quad
d_i(P)>d_j(P).
\end{equation}
In particular, if \(q_i=q_j\), then the equality of these two kernel
coordinates is preserved by the next rank-two iterate.
\end{enumerate}
\end{proposition}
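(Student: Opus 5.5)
The plan is to work throughout with a rank-two Gram factor $P=XX^T$, $X\in\mathbb R^{3\times2}$, whose rows $x_i^T$ are unit vectors, together with the relation $X^Tq=0$; this relation holds because $\ker P=\ker X^T$, as shown in the proof of Lemma~\ref{lem:orthogonal-gram-factors}. Exactly as in the proof of Theorem~\ref{thm:complete-classification}, the factorization of Proposition~\ref{prop:rank-bound} gives
\[
C(P)=D^{-1}XSX^TD^{-1},\qquad d_i(P)^2=\|x_i^TX^TH_3\|^2=x_i^TSx_i .
\]
So the normalization factors of Definition~\ref{def:projective-kernel-coordinate} coincide with the $d_i$ used there.

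\emph{Part 1.} Since $X$ has full column rank, $\operatorname{rank}C(P)=\operatorname{rank}(PH_3)=\operatorname{rank}(X^TH_3)=\operatorname{rank}(H_3X)=\operatorname{rank}S$. The matrix $H_3X$ is rank-deficient iff $Xv\in\operatorname{span}\{\mathbf1\}$ for some $v\neq0$. That happens iff $\mathbf1$ lies in the column space of $X$, which equals $(\ker X^T)^\perp=q^\perp$, i.e.\ iff $s(q)=0$. Because $C(P)$ has unit diagonal it is nonzero, so in this case its rank is exactly one. Theorem~\ref{thm:forward-invariance} gives $C(P)\in\mathcal C_3^{\mathrm{nd}}$, and Corollary~\ref{cor:rank-one-correlation-states} then identifies it as a patterned fixed point.

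\emph{Part 2.} If $s(q)\neq0$, the same argument gives $\operatorname{rank}S=2$, and since $S\succeq0$ this means $S\succ0$. Then $\ker C(P)=\ker(X^TD^{-1})$. Because $X^TD^{-1}(Dq)=X^Tq=0$ and the kernel is one-dimensional, we get $\kappa(C(P))=[Dq]$.

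\emph{Part 3 (the main obstacle).} Gram-factor independence is immediate: by Lemma~\ref{lem:orthogonal-gram-factors} any other factor is $XQ$ with $Q$ orthogonal, and then $S\mapsto Q^TSQ$, so $\det S$ is unchanged. For the identity itself I would use planar cross products. In $\mathbb R^2$ the vector $(\det(x_2,x_3),\det(x_3,x_1),\det(x_1,x_2))$ annihilates $X^T$ (a Cramer/Plücker relation), so I normalize $q_i$ to be these minors, using homogeneity of degree zero in $q$ of both sides of the claimed identity.

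Next, the $2\times2$ minors of $H_3X$ are the $3\times3$ determinants of $[\mathbf1\mid X]$ with one row deleted, up to a common factor. Cauchy--Binet should then give $\det S$ as a constant multiple of $s(q)^2$; I expect $\det S=\tfrac13 s(q)^2$, and checking this constant is the first thing I would do. The target identity then reduces to $d_i^2-d_j^2=\tfrac23 s(q)(q_i-q_j)$.

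To prove this reduced identity, expand $S=X^TX-\tfrac13mm^T$ with $m=X^T\mathbf1=\sum_k x_k$. Then
\[
d_i^2=\sum_k\langle x_i,x_k\rangle^2-\tfrac13\langle m,x_i\rangle^2 .
\]
Use the planar identity $\langle x_i,x_k\rangle^2=1-\det(x_i,x_k)^2$ for unit vectors. This expresses each $d_i^2$ as a polynomial in the $q$'s, since for $k\ne i$ each $\det(x_i,x_k)$ equals $\pm q_l$, where $l$ is the third index. The $\langle m,x_i\rangle^2$ term can be handled the same way after rotating by $90^\circ$: write $\langle m,x_i\rangle^2=|m|^2-\det(m,x_i)^2$, with $\det(m,x_i)=\sum_k\det(x_k,x_i)$. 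The differences $d_i^2-d_j^2$ should then factor as claimed. As a sanity check, the equicorrelation point has $q\propto\mathbf1$ and equal $d_i$, consistent with the formula. If the cross-product bookkeeping becomes messy, the fallback is to eliminate one vector, $x_3=-(q_1x_1+q_2x_2)/q_3$ (after a permutation so that $q_3\neq0$), and verify symbolically, treating the cases with zero $q_i$ by continuity.

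\emph{Part 4.} This follows at once from Part~3, since $\det S>0$ by Part~2. Choosing the sign with $s(q)>0$ makes $d_i^2-d_j^2$ have the sign of $q_i-q_j$, and $d_i>0$. Finally, if $q_i=q_j$ then $d_i=d_j$, so the new representative $Dq$ again has equal $i$-th and $j$-th entries.
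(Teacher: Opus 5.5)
Your proposal is correct. Parts 1, 2 and 4 are essentially the paper's argument; you only package the rank-drop criterion as the single equivalence $\operatorname{rank}(H_3X)<2\iff\mathbf 1\in\operatorname{col}X=q^\perp$, whereas the paper proves the two directions separately.

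Part~3 takes a genuinely different route. The paper never evaluates $\det S$. It uses $YS^{-1}Y^T=H_3$ for $Y=H_3X$ and writes $\bar x=-s(q)^{-1}\sum_j q_jy_j$ from the kernel relation, which gives $x_i^TS^{-1}x_i=1+\|q\|^2/s(q)^2-2q_i/s(q)$. It then converts this into $d_i^2=\operatorname{tr}S-\det(S)\,x_i^TS^{-1}x_i$ via the $2\times2$ Cayley--Hamilton identity. That argument works for an arbitrary representative $q$ and leaves $\det S$ as an intrinsic factor.

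You instead fix the Pl\"ucker representative $q_i=\det(x_j,x_\ell)$ and compute everything explicitly, and your steps do close. Your sentence about ``determinants with one row deleted'' is not meaningful as written; the precise statement is the following. The rows of $H_3X$ sum to zero, and $\det[\mathbf 1\mid H_3X]=\det[\mathbf 1\mid X]=s(q)$ because centering only subtracts multiples of $\mathbf 1$ from the columns. Hence each cyclically signed $2\times2$ minor of $H_3X$ equals $s(q)/3$, and Cauchy--Binet gives $\det S=\tfrac13 s(q)^2$, exactly as you guessed. Your Lagrange-identity expressions then yield
\[
d_1^2-d_2^2=(q_1^2-q_2^2)+\tfrac13\bigl[(q_2-q_3)^2-(q_3-q_1)^2\bigr]=\tfrac23\,s(q)\,(q_1-q_2),
\]
with the $\|m\|^2$ terms cancelling. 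So the symbolic fallback is unnecessary.

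Each route has its own payoff. Yours gives explicit information the paper does not state: the closed formula $\det S=\tfrac13 s(q)^2$ for the normalized kernel vector, which independently reproves $\det S>0\iff s(q)\neq0$, and explicit quadratic formulas for the $d_i^2$. The paper's route is shorter, representative-free, and avoids the coordinate bookkeeping.
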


\begin{proof}
By Proposition~\ref{prop:rank-bound},
\[
C(P)=D(P)^{-1}PH_3PD(P)^{-1}.
\]
Since \(Pq=0\),
\[
C(P)D(P)q
=
D(P)^{-1}PH_3Pq
=0.
\]
Thus \(D(P)q\in\ker C(P)\).

We first characterize the possible rank drop. If \(s(q)=q^T\mathbf1=0\),
then \(H_3q=q\), and hence
\[
PH_3q=Pq=0.
\]
Also \(PH_3\mathbf1=0\). The vectors \(q\) and \(\mathbf1\) are
linearly independent, so \(\dim\ker(PH_3)\ge2\) and therefore
\[
\operatorname{rank}(PH_3)\le1.
\]
The exact rank identity gives \(\operatorname{rank}C(P)\le1\). Since
\(C(P)\) is a correlation matrix, its rank is at least one. Hence
\(\operatorname{rank}C(P)=1\), and forward invariance together with
Corollary~\ref{cor:rank-one-correlation-states} identifies \(C(P)\) as a
nondegenerate patterned fixed point.

Conversely, assume \(s(q)\ne0\). The matrix \(S=X^TH_3X\) is positive
semidefinite. If it were singular, there would exist \(v\ne0\) such that
\(H_3Xv=0\). Hence \(Xv=c\mathbf1\) for some scalar \(c\). If \(c=0\),
then \(Xv=0\), which contradicts the full column rank of \(X\) and
\(v\ne0\). Thus \(c\ne0\). Since \(X^Tq=0\),
\[
0=q^TXv=c\,q^T\mathbf1.
\]
Because \(c\ne0\), this gives \(q^T\mathbf1=0\), contradicting
\(s(q)\ne0\). Thus
\[
S\succ0.
\]
In particular, \(H_3X\) has rank two. Since
\[
PH_3=X(X^TH_3)
\]
and \(X\) has full column rank, the linear map
\(u\mapsto Xu\) is injective on \(\mathbb R^2\). Hence left
multiplication by \(X\) does not reduce the rank of the \(2\times3\)
matrix \(X^TH_3\), and therefore
\[
\operatorname{rank}(PH_3)
=
\operatorname{rank}(X^TH_3)
=
\operatorname{rank}(H_3X)
=2.
\]
Hence \(\operatorname{rank}C(P)=2\). Its kernel is therefore
one-dimensional, and the already established inclusion
\(D(P)q\in\ker C(P)\) proves
\eqref{eq:kernel-update}. This also proves the converse implication in
part~1.

It remains to prove the order identity. Let
\[
\bar x=\frac{x_1+x_2+x_3}{3},
\qquad
y_i=x_i-\bar x,
\]
and let \(Y=H_3X\), whose rows are \(y_i^T\). Since
\(S=Y^TY\succ0\), the two columns of \(Y\) form a basis of
\(\mathbf1^\perp\). Therefore
\begin{equation}\label{eq:centered-projector-identity}
YS^{-1}Y^T=H_3,
\end{equation}
which gives
\begin{equation}\label{eq:y-Sinv-y}
y_i^TS^{-1}y_j=\delta_{ij}-\frac13.
\end{equation}
The kernel relation \(\sum_{j=1}^3q_jx_j=0\) yields
\[
\bar x=-\frac1{s(q)}\sum_{j=1}^3q_jy_j.
\]
Using \eqref{eq:y-Sinv-y},
\[
y_i^TS^{-1}\bar x
=
\frac13-\frac{q_i}{s(q)},
\]
and
\[
\bar x^TS^{-1}\bar x
=
\frac{\|q\|^2}{s(q)^2}-\frac13.
\]
Consequently,
\begin{equation}\label{eq:xi-Sinv-xi}
x_i^TS^{-1}x_i
=
1+\frac{\|q\|^2}{s(q)^2}-\frac{2q_i}{s(q)}.
\end{equation}

The centered \(i\)-th row of \(P=XX^T\) is
\(x_i^TX^TH_3\), so
\[
d_i(P)^2
=
\|P_{i,:}H_3\|^2
=
x_i^TSx_i.
\]
For the nonsingular \(2\times2\) matrix \(S\), the Cayley--Hamilton
identity gives
\[
S=(\operatorname{tr}S)I_2-\det(S)S^{-1}.
\]
Since \(\|x_i\|=1\),
\[
d_i(P)^2
=
\operatorname{tr}S-\det(S)x_i^TS^{-1}x_i.
\]
Subtracting the formulas for \(i\) and \(j\) and using
\eqref{eq:xi-Sinv-xi} yields
\eqref{eq:kernel-order-identity}.

If \(P=X'X'^T\) is another full-column-rank Gram factorization, then
Lemma~\ref{lem:orthogonal-gram-factors} gives \(X'=XQ\) for some
orthogonal \(Q\). Thus
\[
X'^TH_3X'=Q^TSQ,
\]
so \(\det(S)\) is independent of the chosen Gram factor. Finally, when
\(s(q)>0\), positivity of \(S\) gives \(\det(S)>0\), and
\eqref{eq:kernel-order-equivalence} follows from
\eqref{eq:kernel-order-identity}. If \(q_i=q_j\), then
\(d_i(P)=d_j(P)\); equation \eqref{eq:kernel-update} therefore preserves
that equality whenever the next iterate has rank two.
\end{proof}

\begin{lemma}[Triangle constraints for rank-two kernel coordinates]
\label{lem:kernel-triangle-constraints}
Let \(P=XX^T\in\mathcal C_3^{\mathrm{nd}}\) have rank two, where
\(X\in\mathbb R^{3\times2}\) has unit row vectors
\(x_1^T,x_2^T,x_3^T\). Let
\(q=(q_1,q_2,q_3)^T\ne0\) span \(\ker P\). Then, for every permutation
\((i,j,\ell)\) of \((1,2,3)\),
\begin{equation}\label{eq:kernel-triangle-inequality}
|q_i|\le |q_j|+|q_\ell|.
\end{equation}
If \(q_1q_2q_3\ne0\), all three inequalities are strict. If one
coordinate vanishes, say \(q_\ell=0\), then the other two coordinates are
nonzero and satisfy \(|q_i|=|q_j|\).
\end{lemma}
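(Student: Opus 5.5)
The plan is to work directly with the kernel relation \(X^Tq=0\), which was already used in the proof of Lemma~\ref{lem:kernel-reconstruction}. Since \(\ker P=\ker X^T\), we have
\[
q_1x_1+q_2x_2+q_3x_3=0 .
\]
Fix a permutation \((i,j,\ell)\) and isolate one term:
\[
q_ix_i=-(q_jx_j+q_\ell x_\ell).
\]
Take norms and use \(\|x_k\|=1\) together with the triangle inequality in \(\mathbb R^2\):
\[
|q_i|=\|q_jx_j+q_\ell x_\ell\|\le|q_j|+|q_\ell|.
\]
This gives \eqref{eq:kernel-triangle-inequality} immediately.

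For strictness when \(q_1q_2q_3\ne0\), I would examine the equality case of the triangle inequality. Equality \(|q_i|=|q_j|+|q_\ell|\) with \(q_j,q_\ell\ne0\) forces \(q_jx_j\) and \(q_\ell x_\ell\) to be nonnegative multiples of each other. Hence \(x_j\) and \(x_\ell\) are collinear, and \(x_i\) is collinear with them as well. Then all three rows of \(X\) lie on one line, so \(\operatorname{rank}X\le1\). This contradicts \(\operatorname{rank}P=2\).

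An alternative route to the same conclusion uses the identity
\[
q_i^2=q_j^2+q_\ell^2+2q_jq_\ell\langle x_j,x_\ell\rangle .
\]
Equality then forces \(\langle x_j,x_\ell\rangle=\pm1\), so \(x_j=\pm x_\ell\). Combined with the kernel relation, this again puts \(x_i\) on the same line.

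For the vanishing case, suppose \(q_\ell=0\). The kernel relation becomes \(q_ix_i+q_jx_j=0\). If, say, \(q_i=0\), then \(q_jx_j=0\), so \(q_j=0\) because \(x_j\ne0\), contradicting \(q\ne0\). Hence both \(q_i\) and \(q_j\) are nonzero. Taking norms in \(q_ix_i=-q_jx_j\) then gives \(|q_i|=|q_j|\), using that the \(x_k\) are unit vectors. By symmetry, the same argument applies for any choice of the vanishing index \(\ell\).

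The only mildly delicate step is the strictness argument. There one must check that equality in the triangle inequality genuinely forces all three unit vectors onto a single line, which is what contradicts the full column rank of \(X\). Beyond that, the proof is a direct consequence of the kernel relation and the unit-norm rows.
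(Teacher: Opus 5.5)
Your proposal is correct and follows the paper's proof essentially step for step. Both arguments use the kernel relation \(q_1x_1+q_2x_2+q_3x_3=0\) and the triangle inequality for unit rows, let the equality case force all three rows onto one line (contradicting \(\operatorname{rank}X=2\)), and handle the vanishing case by comparing norms in \(q_ix_i=-q_jx_j\). Your alternative equality argument via \(q_i^2=q_j^2+q_\ell^2+2q_jq_\ell\langle x_j,x_\ell\rangle\) is also valid; it is the same squared-norm computation the paper uses in its kernel reconstruction lemma.
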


\begin{proof}
Since \(q\in\ker X^T\),
\[
q_1x_1+q_2x_2+q_3x_3=0.
\]
Thus, for every permutation \((i,j,\ell)\),
\[
|q_i|
=\|q_i x_i\|
=\|q_jx_j+q_\ell x_\ell\|
\le |q_j|+|q_\ell|,
\]
which proves \eqref{eq:kernel-triangle-inequality}. If all three
coordinates are nonzero and equality holds for one index, equality in the
Euclidean triangle inequality forces \(x_j\) and \(x_\ell\) to be
collinear in the directions required by the coefficients. The relation above
then forces \(x_i\) to be collinear with them as well, contradicting
\(\operatorname{rank}X=2\). Hence all three inequalities are strict.

If \(q_\ell=0\), then
\(q_ix_i=-q_jx_j\). Neither \(q_i\) nor \(q_j\) can vanish, since two
zero coordinates would force the remaining nonzero multiple of a unit
vector to equal zero. Taking norms gives \(|q_i|=|q_j|\).
\end{proof}

\begin{theorem}[Global convergence in dimension three]
\label{thm:global-convergence}
For every
\[
P_0\in\mathcal C_3^{\mathrm{nd}},
\]
the forward orbit
\[
P_{k+1}=C(P_k),
\qquad k\ge0,
\]
is defined for all \(k\ge0\) and converges to one of the seven fixed
correlation matrices corresponding, under the parametrization
\eqref{eq:correlation-parametrization}, to the seven coordinate points in
\eqref{eq:complete-fixed-point-set}.
\end{theorem}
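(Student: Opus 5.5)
The plan is to reduce everything to the projective kernel dynamics of Proposition~\ref{prop:projective-kernel-dynamics}. First, by Theorem~\ref{thm:forward-invariance} the orbit is defined for all $k$. By Proposition~\ref{prop:rank-bound}, $\operatorname{rank}P_1\le2$.

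If some iterate $P_k$ has rank one, Corollary~\ref{cor:rank-one-correlation-states} makes it a patterned fixed point and the orbit is eventually constant. Likewise, if some rank-two iterate has $s(q)=0$, part~1 of Proposition~\ref{prop:projective-kernel-dynamics} sends the next iterate to a patterned fixed point. So the remaining case is an orbit $P_k$ ($k\ge1$) of rank two with $s(q^{(k)})\ne0$ for all $k$. We normalize representatives $q^{(k)}$ with $s(q^{(k)})>0$ and use the update $q^{(k+1)}=D(P_k)q^{(k)}$. This rule keeps $s>0$ automatically as long as the signs are consistent, and the next normalization can be fixed by choosing the sign so that $s>0$.

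The monotonicity engine is part~4. Whenever $q_i>q_j$ we have $d_i>d_j$. Hence for coordinates of the same strict sign, the ratio $q_i/q_j$ moves monotonically: it increases when $q_i>q_j>0$. Coordinate equalities are preserved, and the sign pattern of $q$ is preserved as well, since $D$ is a positive diagonal matrix. Lemma~\ref{lem:kernel-triangle-constraints} together with $s>0$ leaves only two sign chambers, plus their boundary cases.

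\textbf{Chamber (i): all $q_i>0$.} Order the coordinates as $q_1\ge q_2\ge q_3>0$ after a permutation, using Corollary~\ref{cor:T-equivariant}.
\begin{itemize}
\item If all three are equal, the kernel is $[\mathbf1]$ and Lemma~\ref{lem:kernel-reconstruction} gives $P=A(-\frac12,-\frac12,-\frac12)$, which is fixed.
\item Otherwise the ratios $q_1/q_3$ and $q_2/q_3$ are nondecreasing. By the strict triangle constraints $q_1<q_2+q_3$, so $q_1/q_3<q_2/q_3+1$, and $q_2/q_3\le q_1/q_3$ keeps the pair in a bounded region. Together with monotonicity, the normalized vectors $q^{(k)}/\|q^{(k)}\|$ then converge to some $q^\ast$ in the closure of the chamber.
\end{itemize}

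\textbf{Chamber (ii): exactly one negative coordinate,} say $q_3<0<q_1,q_2$ with $s>0$. Here $d_3$ is the smallest factor, so $|q_3|/q_1$ and $|q_3|/q_2$ are nonincreasing, while $q_1/q_2$ is monotone. The normalized kernels again converge.

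\textbf{Boundary cases.} A zero coordinate gives $q=(1,\pm1,0)$ up to permutation by Lemma~\ref{lem:kernel-triangle-constraints}. With $s>0$ this is $(1,1,0)$, whose matrix is a mixed point, which is fixed.

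Convergence of $[q^{(k)}]$ must then be turned into convergence of $P_k$. On $q_1q_2q_3\ne0$ I would use the continuous reconstruction formulas of Lemma~\ref{lem:kernel-reconstruction}. If the limit $[q^\ast]$ lies in the open locus with $s(q^\ast)>0$ and strict triangle inequalities, the reconstructed $P^\ast$ is a rank-two matrix in $\mathcal C_3^{\mathrm{nd}}$ and $P_k\to P^\ast$. Continuity of $T$ (Remark~\ref{rem:nondegenerate-domain}) then gives $T(P^\ast)=P^\ast$, so Theorem~\ref{thm:complete-classification} identifies it as one of the seven points. Equivalently, $D(P^\ast)q^\ast\parallel q^\ast$, which together with \eqref{eq:kernel-order-identity} forces equal $d_i$ on the support of $q^\ast$.

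The main obstacle is a limit $[q^\ast]$ on the boundary of the chamber, where reconstruction degenerates. The possibilities are:
\begin{itemize}
\item a coordinate tending to $0$;
\item a triangle inequality becoming an equality, i.e.\ an approach to rank one;
\item $s(q^\ast)=0$.
\end{itemize}
Here I would work directly with the compact elliptope. Take any accumulation point $P^\ast$ of $P_k$ in $\mathcal E_3$; its kernel must contain $q^\ast$. I would then argue case by case using the explicit matrices:
\begin{itemize}
\item A triangle equality forces $P^\ast$ to be rank one, so $P^\ast=ss^T$.
\item $P^\ast=\mathbf1\mathbf1^T$ is excluded because its kernel is $\mathbf1^\perp$, which forces $s(q^\ast)=0$.
\item I would rule out $s(q^{(k)})/\|q^{(k)}\|\to0$ with a positive-definite-$S$ lower bound, or else show that it leads to a patterned limit via \eqref{eq:kernel-order-identity}.
\item A vanishing coordinate gives a mixed kernel $(1,1,0)$, whose matrix is uniquely a mixed point.
\end{itemize}
In each case the accumulation point is unique. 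Because the projective limit is unique and $\mathcal E_3$ is compact, the whole sequence $P_k$ converges, and the limit is a fixed point by continuity wherever it lies in $\mathcal E_3^{\mathrm{nd}}$. The delicate part I expect to spend the most effort on is excluding convergence to $(1,1,1)$ and handling $s\to0$ uniformly.
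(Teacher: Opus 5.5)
\textbf{Verdict: there is a genuine gap, in the zero-coordinate case.} Your reduction is sound: forward invariance, rank at most two after one step, disposal of rank one and of $s(q)=0$, and the positive diagonal update preserving signs and order. Your handling of a limit $[q^\ast]$ with strict triangle inequalities is also valid, and somewhat shorter than the paper's route: strict inequalities force all coordinates nonzero, so reconstruction is continuous, $P_k\to P^\ast$, and continuity of $T$ plus Theorem~\ref{thm:complete-classification} makes $P^\ast$ fixed. The paper instead derives a contradiction from $d_{i,k}/d_{j,k}\to1$ and \eqref{eq:kernel-order-identity}.

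The gap is where a kernel coordinate vanishes, which is exactly where the projective kernel coordinate stops determining the state. For $q=(1,1,0)^T$, the kernel relation only gives $x_2=-x_1$. So $P=A(-1,-t,t)$ with $t=\langle x_2,x_3\rangle$ arbitrary in $(-1,1)$, and the entire invariant curve of Lemma~\ref{lem:mixed-invariant-curve} has kernel $[(1,1,0)^T]$. Your claim that this matrix ``is a mixed point, which is fixed'' is false for $t\neq0$. Along this curve $[q^{(k)}]$ is constant while the state moves, so no projective monotonicity argument can detect the motion. The missing step is the scalar dynamics $t_{k+1}=g(t_k)$. One needs $|t|<|g(t)|<1$ for $0<|t|<1$, together with sign preservation. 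Then $|t_k|$ increases to a limit that continuity forces to equal $1$, and the orbit converges to a patterned vertex. This is the paper's Case~3.

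The same error recurs in your limiting boundary case, where you say a vanishing coordinate gives $(1,1,0)$, ``whose matrix is uniquely a mixed point.'' The formulas of Lemma~\ref{lem:kernel-reconstruction} are discontinuous there, since $b$ and $c$ depend on $(q_2^2-q_1^2)/q_3$. So an accumulation point is a priori any $A(-1,-t,t)$ with $t\in[-1,1]$, and uniqueness of $[q^\ast]$ does not give uniqueness of the accumulation point of $P_k$.

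What you need is that a coordinate can tend to zero only on an invariant equality locus.
\begin{itemize}
\item In chamber (i), your ``bounded region'' claim for $q_1/q_3$ and $q_2/q_3$ is wrong: both ratios diverge on the locus $q_1=q_2$. Normalize by the largest coordinate instead. Then $u_k=q_2/q_1$ and $v_k=q_3/q_1$ are nonincreasing with $u_k+v_k>1$, so $v_\infty=0$ forces $u_k\equiv1$. In that case reconstruction gives exactly $P_k=A(-1+z_k^2/2,-z_k/2,-z_k/2)\to A(-1,0,0)$.
\item In chamber (ii), divide by the smaller positive coordinate. The ratio of the negative coordinate to it is nonincreasing, and the ratio of the other positive coordinate to it is nondecreasing. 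The strict triangle inequality then shows the negative coordinate can vanish only if the two positive coordinates are equal, which again pins the limit at a mixed point.
\end{itemize}

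The same normalization in chamber (ii) also closes the case you left open, $s(q^{(k)})/\|q^{(k)}\|\to0$. The sum $s$ divided by the smaller positive coordinate is nondecreasing, so it stays above its positive initial value. This keeps the orbit away from $\mathbf1\mathbf1^T$ without any bound on $S$.
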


\begin{proof}
Forward well-posedness follows from Theorem~\ref{thm:forward-invariance}.
The exact rank identity gives
\[
\operatorname{rank}P_1\le2.
\]
If any \(P_k\) has rank one, then
Corollary~\ref{cor:rank-one-correlation-states} shows that it is one of the
three patterned fixed points, and the orbit is constant thereafter. It therefore remains only to
consider an orbit for which
\begin{equation}\label{eq:permanent-rank-two}
\operatorname{rank}P_k=2
\qquad(k\ge1).
\end{equation}

For each \(k\ge1\), choose a nonzero vector \(q_k\) spanning
\(\ker P_k\), and set
\[
d_{i,k}=d_i(P_k),
\qquad
D_k=D(P_k)=\operatorname{diag}(d_{1,k},d_{2,k},d_{3,k}).
\]
Under \eqref{eq:permanent-rank-two},
Proposition~\ref{prop:projective-kernel-dynamics} gives
\(s(q_k)\ne0\) for every \(k\). Choose the representative \(q_1\) so
that \(s(q_1)>0\), and then choose successive representatives by
\begin{equation}\label{eq:successive-kernel-representatives}
q_{k+1}=D_kq_k.
\end{equation}
Because every diagonal entry of \(D_k\) is positive, the sign of each
kernel coordinate is preserved along the rank-two orbit.

For each \(k\ge1\), choose a full-column-rank Gram factor
\[
P_k=X_kX_k^T,
\qquad
X_k\in\mathbb R^{3\times2},
\]
and denote its unit row vectors by
\(x_{1,k}^T,x_{2,k}^T,x_{3,k}^T\). These vectors are introduced only to
invoke Lemma~\ref{lem:kernel-triangle-constraints}; the projective kernel
coordinate itself is independent of the chosen Gram factor.

The relabeling of kernel coordinates used below is compatible with the
permutation symmetry. Indeed, if
\[
\widetilde P=R_\sigma P R_\sigma^T
\]
and \(Pq=0\), then
\[
\widetilde P(R_\sigma q)
=
R_\sigma Pq
=
0.
\]
Thus simultaneous permutation of the matrix indices induces the same
permutation of the kernel coordinates.

Suppose first that all three coordinates of \(q_1\) are nonzero.
Lemma~\ref{lem:kernel-triangle-constraints} gives strict triangle
inequalities for their absolute values. Because \(s(q_1)>0\), the only
possible sign patterns are: all three coordinates positive, or exactly one
coordinate negative. Equation
\eqref{eq:successive-kernel-representatives} preserves the sign of every
coordinate. In the all-positive chamber the coordinate sum is automatically
positive; in the one-negative chamber the strict triangle inequality says
that the magnitude of the negative coordinate is smaller than the sum of the
two positive coordinates. Hence \(s(q_k)>0\) for every subsequent
rank-two iterate. Proposition~\ref{prop:projective-kernel-dynamics}
therefore applies with the same orientation at every step.

\medskip
\noindent
\textbf{Case 1: all kernel coordinates are positive.}

By permutation equivariance, relabel the indices once so that
\[
q_{1,1}\ge q_{2,1}\ge q_{3,1}>0.
\]
Proposition~\ref{prop:projective-kernel-dynamics} and the update
\(q_{k+1}=D_kq_k\) preserve this order. Normalize projectively by the first
coordinate and write
\[
u_k=\frac{q_{2,k}}{q_{1,k}},
\qquad
v_k=\frac{q_{3,k}}{q_{1,k}}.
\]
The strict triangle inequalities give
\begin{equation}\label{eq:positive-kernel-region}
1\ge u_k\ge v_k>0,
\qquad
u_k+v_k>1.
\end{equation}
Moreover,
\[
u_{k+1}
=
 u_k\frac{d_{2,k}}{d_{1,k}}
\le u_k,
\qquad
v_{k+1}
=
 v_k\frac{d_{3,k}}{d_{1,k}}
\le v_k.
\]
Thus
\[
u_k\downarrow u_\infty,
\qquad
v_k\downarrow v_\infty,
\qquad
u_\infty+v_\infty\ge1.
\]

If \(q_{1,1}=q_{2,1}=q_{3,1}\), Lemma~\ref{lem:kernel-reconstruction} gives
\[
P_1=
A\left(-\frac12,-\frac12,-\frac12\right),
\]
so the orbit is already at the equicorrelation fixed point.

Suppose next that \(q_{1,1}=q_{2,1}>q_{3,1}\). Equality of the first two
kernel coordinates is preserved. Define the projective ratio
\[
z_k=\frac{q_{3,k}}{q_{1,k}}.
\]
Then \(0<z_k<1\), the projective kernel class is represented by
\((1,1,z_k)^T\), and
\[
z_{k+1}
=
z_k\frac{d_{3,k}}{d_{1,k}}
<z_k.
\]
Hence \(z_k\downarrow z_\infty\ge0\). If \(z_\infty>0\), then \(0<z_\infty<1\), and the reconstruction
formulas show that \(P_k\) converges to the rank-two correlation matrix
\(P_\infty\) reconstructed from \(q_\infty=(1,1,z_\infty)^T\). Moreover,
\(z_{k+1}/z_k\to1\) gives \(d_{3,k}/d_{1,k}\to1\). Since each function
\(P\mapsto d_i(P)=\|P_{i,:}H_3\|\) is continuous,
\(d_3(P_\infty)=d_1(P_\infty)\). Applying
\eqref{eq:kernel-order-identity} to \((P_\infty,q_\infty)\) gives
\(q_{3,\infty}=q_{1,\infty}\), contradicting \(z_\infty<1\). Therefore
\(z_\infty=0\).
Equations \eqref{eq:kernel-reconstruct-a}--\eqref{eq:kernel-reconstruct-c}
give
\[
P_k
=
A\left(
-1+\frac{z_k^2}{2},
-\frac{z_k}{2},
-\frac{z_k}{2}
\right)
\longrightarrow
A(-1,0,0).
\]
Thus this invariant equality locus converges to a mixed fixed point.

Finally suppose \(q_{1,1}>q_{2,1}\). Then \(u_k\le u_1<1\), so
\(u_\infty<1\). If \(v_\infty=0\),
\eqref{eq:positive-kernel-region} would imply \(u_\infty\ge1\), a
contradiction. Thus \(v_\infty>0\). Suppose that
\(u_\infty+v_\infty>1\). Then
\(q_\infty=(1,u_\infty,v_\infty)^T\) has nonzero coordinates and satisfies
all three triangle inequalities strictly. The reconstruction formulas
therefore give a rank-two correlation matrix \(P_\infty\), and
\(P_k\to P_\infty\). Since \(u_k,v_k\) converge to positive limits,
\[
\frac{d_{2,k}}{d_{1,k}}
=
\frac{u_{k+1}}{u_k}
\longrightarrow1,
\qquad
\frac{d_{3,k}}{d_{1,k}}
=
\frac{v_{k+1}}{v_k}
\longrightarrow1.
\]
Continuity of the normalization factors gives
\(d_1(P_\infty)=d_2(P_\infty)=d_3(P_\infty)\). Applying
\eqref{eq:kernel-order-identity} to \((P_\infty,q_\infty)\) forces
\(q_{1,\infty}=q_{2,\infty}=q_{3,\infty}\), contradicting
\(u_\infty<1\).
Hence
\[
u_\infty+v_\infty=1.
\]
Substitution of \(q=(1,u_\infty,1-u_\infty)^T\) into
Lemma~\ref{lem:kernel-reconstruction} gives
\[
P_\infty=A(-1,-1,1).
\]
Therefore the matrix orbit converges to a patterned fixed point, up to the
initial permutation of the indices.

\medskip
\noindent
\textbf{Case 2: exactly one kernel coordinate is negative.}

By permutation equivariance, relabel the indices once so that
\[
q_{1,1}<0<q_{3,1}\le q_{2,1}.
\]
The order is preserved by Proposition~\ref{prop:projective-kernel-dynamics}.
Define the projective ratios
\[
x_k=-\frac{q_{1,k}}{q_{3,k}}>0,
\qquad
r_k=\frac{q_{2,k}}{q_{3,k}}\ge1.
\]
Thus \(\kappa(P_k)\) is represented by \((-x_k,r_k,1)^T\). The strict
triangle inequality gives
\begin{equation}\label{eq:one-negative-region}
r_k<1+x_k.
\end{equation}
Since
\[
-x_k<1\le r_k,
\]
Proposition~\ref{prop:projective-kernel-dynamics} gives
\[
d_{1,k}<d_{3,k}\le d_{2,k}.
\]
Thus
\[
x_{k+1}
=
x_k\frac{d_{1,k}}{d_{3,k}}
<x_k,
\qquad
r_{k+1}
=
r_k\frac{d_{2,k}}{d_{3,k}}
\ge r_k.
\]
Hence
\[
x_k\downarrow x_\infty,
\qquad
r_k\uparrow r_\infty,
\qquad
r_\infty\le1+x_\infty.
\]

If \(r_1=1\), equality of the two positive kernel coordinates is preserved.
The strict triangle inequality gives \(x_k<2\) for every \(k\), and since
\(x_k\) is decreasing,
\[
0\le x_\infty\le x_1<2.
\]
If \(x_\infty>0\), then \(q_\infty=(-x_\infty,1,1)^T\) has three
nonzero coordinates satisfying all triangle inequalities strictly. The
reconstruction formulas therefore give \(P_k\to P_\infty\), where
\(P_\infty\) is a rank-two correlation matrix with kernel spanned by
\(q_\infty\). Also
\(x_{k+1}/x_k\to1\), so \(d_{1,k}/d_{3,k}\to1\). By continuity,
\(d_1(P_\infty)=d_3(P_\infty)\), and
\eqref{eq:kernel-order-identity} at \((P_\infty,q_\infty)\) would give
\(-x_\infty=1\), a contradiction. Hence \(x_\infty=0\). Lemma~\ref{lem:kernel-reconstruction} gives
\[
P_k
=
A\left(
\frac{x_k}{2},
\frac{x_k}{2},
-1+\frac{x_k^2}{2}
\right)
\longrightarrow
A(0,0,-1),
\]
which is a mixed fixed point.

If \(r_1>1\), then \(r_\infty\ge r_1>1\). Therefore
\eqref{eq:one-negative-region} implies \(x_\infty>0\). The second
nontrivial triangle inequality is
\[
x_k<r_k+1.
\]
Because \(x_k\) is decreasing and \(r_k\) is increasing,
\[
r_k+1-x_k\ge r_1+1-x_1>0,
\]
and hence
\[
x_\infty<r_\infty+1.
\]
Thus the projective limit cannot approach the zero-sum boundary
\(x=r+1\), which would correspond, through the reconstruction formulas, to
the excluded all-ones vertex. If \(r_\infty<1+x_\infty\), then, together with
\(x_\infty<r_\infty+1\), all three triangle inequalities are strict for
\(q_\infty=(-x_\infty,r_\infty,1)^T\). The reconstruction formulas give a
rank-two correlation matrix \(P_\infty\) with \(P_k\to P_\infty\). The
convergence of \(x_k\) and \(r_k\) gives
\[
\frac{d_{1,k}}{d_{3,k}}
=
\frac{x_{k+1}}{x_k}
\longrightarrow1,
\qquad
\frac{d_{2,k}}{d_{3,k}}
=
\frac{r_{k+1}}{r_k}
\longrightarrow1.
\]
By continuity,
\(d_1(P_\infty)=d_2(P_\infty)=d_3(P_\infty)\). Applying
\eqref{eq:kernel-order-identity} to \((P_\infty,q_\infty)\) would force
all three kernel coordinates to be equal, impossible because one coordinate
is negative. Consequently,
\[
r_\infty=1+x_\infty.
\]
Substitution of \(q=(-x_\infty,1+x_\infty,1)^T\) into the reconstruction formulas yields
\[
P_\infty=A(1,-1,-1).
\]
Thus the matrix orbit converges to a patterned fixed point, up to permutation.

\medskip
\noindent
\textbf{Case 3: one kernel coordinate is zero.}

By Lemma~\ref{lem:kernel-triangle-constraints}, two kernel coordinates
cannot vanish. Assume after permutation that the zero coordinate is the third
one. Because the diagonal update in
\eqref{eq:successive-kernel-representatives} is positive, this coordinate
remains zero for every subsequent rank-two iterate. For a representative
\(q=(q_1,q_2,0)^T\), Lemma~\ref{lem:kernel-triangle-constraints} gives
\[
|q_1|=|q_2|.
\]
The alternative \(q_2=-q_1\) would give
\[
s(q)=q_1+q_2+q_3=0,
\]
so Proposition~\ref{prop:projective-kernel-dynamics} would force the next
iterate to have rank one. This is excluded by the permanent-rank-two
assumption \eqref{eq:permanent-rank-two}; in the complementary branch of the
proof, such an orbit has already been settled because the resulting rank-one
state is a patterned fixed point.

It therefore remains to consider \(q_2=q_1\). Normalize the projective representative to
\(q=(1,1,0)^T\). For the corresponding Gram factor
\(P=XX^T\) with unit rows \(x_1^T,x_2^T,x_3^T\), the kernel relation gives
\(x_2=-x_1\). Hence the correlation matrix lies on the invariant boundary
curve
\[
P=A(-1,-t,t),
\qquad
|t|<1.
\]
Lemma~\ref{lem:mixed-invariant-curve} gives
\[
t^+=g(t)
=
\frac{4t}{\sqrt{3t^4+10t^2+3}}.
\]
For \(0<|t|<1\),
\[
\frac{g(t)^2}{t^2}
=
\frac{16}{3t^4+10t^2+3}
>1
\]
and
\[
1-g(t)^2
=
\frac{3(t^2-1)^2}{3t^4+10t^2+3}
>0.
\]
Hence
\[
|t|<|g(t)|<1.
\]
The sign of \(t\) is preserved, so \(|t_k|\) increases to a limit. By
continuity, the only positive limiting modulus is \(1\). Thus
\(t_k\to1\) or \(t_k\to-1\), and the orbit converges to a patterned
rank-one fixed point. If \(t=0\), the starting state is the mixed fixed point
\((-1,0,0)\).

The three cases exhaust all rank-two kernel configurations. Therefore every
admissible orbit converges to one of the seven fixed points.
\end{proof}

\begin{corollary}[Coordinate form of global convergence]
\label{cor:coordinate-global-convergence}
For every
\[
p_0\in\mathcal E_3^{\mathrm{nd}},
\]
the orbit \(T^k(p_0)\) converges to one of the seven points in
\eqref{eq:complete-fixed-point-set}.
\end{corollary}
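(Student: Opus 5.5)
This corollary is the coordinate version of Theorem~\ref{thm:global-convergence}, so the plan is to transport that theorem through the parametrization \eqref{eq:correlation-parametrization}. Given \(p_0\in\mathcal E_3^{\mathrm{nd}}\), set \(P_0=A(p_0)\). By Remark~\ref{rem:nondegenerate-domain}, \(\mathcal E_3^{\mathrm{nd}}=\mathcal E_3\setminus\{(1,1,1)\}\), and Proposition~\ref{prop:unique-degenerate-point} gives \(\mathcal C_3^{\mathrm{nd}}=\mathcal C_3\setminus\{\mathbf1\mathbf1^T\}\). Hence \(P_0\in\mathcal C_3^{\mathrm{nd}}\). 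In fact \(p\mapsto A(p)\) is a bijection from \(\mathcal E_3^{\mathrm{nd}}\) onto \(\mathcal C_3^{\mathrm{nd}}\).

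The first step is the conjugacy identity
\[
A\bigl(T(p)\bigr)=C\bigl(A(p)\bigr),\qquad p\in\mathcal E_3^{\mathrm{nd}}.
\]
This holds by the definition of \(T\): its components are the off-diagonal entries of \(C(A(p))\), and \(C(A(p))\) is a correlation matrix by Proposition~\ref{prop:corrmatrix}. By induction, using forward invariance (Theorem~\ref{thm:forward-invariance}, equivalently Remark~\ref{rem:nondegenerate-domain}) to keep every iterate in the domain, one obtains \(A(T^k(p_0))=P_k\) for all \(k\ge0\), where \(P_{k+1}=C(P_k)\).

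Next, invoke Theorem~\ref{thm:global-convergence}. It gives \(P_k\to P_\infty\), where \(P_\infty=A(p_\ast)\) for one of the seven points \(p_\ast\) in \eqref{eq:complete-fixed-point-set}. Convergence of matrices implies entrywise convergence. Reading off the \((1,2)\), \((1,3)\) and \((2,3)\) entries gives \(T^k(p_0)\to p_\ast\) in \(\mathbb R^3\). Equivalently, the inverse of the parametrization, \(A\mapsto(A_{12},A_{13},A_{23})\), is linear and hence continuous.

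There is no real obstacle here. The only point requiring care is that the limit matrix in the theorem is identified exactly with \(A(p_\ast)\) for a listed point \(p_\ast\), rather than with some other boundary matrix. The theorem's statement supplies this identification directly, so the argument closes. It is also worth noting that the limit lies in \(\mathcal E_3^{\mathrm{nd}}\), so the excluded vertex \((1,1,1)\) is never approached.
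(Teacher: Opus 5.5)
Your proposal is correct and follows the paper's proof: the paper also uses the conjugacy $A(T(p))=C(A(p))$ and extends it by induction to $A(T^k(p_0))=C^k(A(p_0))$. It then transfers Theorem~\ref{thm:global-convergence} through the affine homeomorphism $p\mapsto A(p)$, which is your entrywise-reading step, and your remarks on forward invariance and on the bijection $\mathcal E_3^{\mathrm{nd}}\to\mathcal C_3^{\mathrm{nd}}$ just make explicit what the paper leaves implicit.
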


\begin{proof}
By the definition of the induced coordinate map,
\[
A(T(p))=C(A(p))
\]
for every \(p\in\mathcal E_3^{\mathrm{nd}}\). Hence, by induction,
\[
A(T^k(p_0))=C^k(A(p_0))
\qquad(k\ge0).
\]
The parametrization \(p\mapsto A(p)\) is an affine homeomorphism from
\(\mathbb R^3\) onto the affine space of symmetric \(3\times3\) matrices
with unit diagonal, and it restricts to a homeomorphism from
\(\mathcal E_3\) onto \(\mathcal C_3\). The conclusion therefore follows
directly from Theorem~\ref{thm:global-convergence}.
\end{proof}

\begin{corollary}[Absence of nontrivial recurrent limit sets]
\label{cor:no-nontrivial-periodic-orbits}
The \(3\times3\) Pearson correlation map has no periodic orbit of prime period
greater than one. More generally, for every
\(p\in\mathcal E_3^{\mathrm{nd}}\), the omega-limit set
\(\omega(p)\) from Definition~\ref{def:omega-limit-set} is a singleton
fixed point.
\end{corollary}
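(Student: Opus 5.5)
The plan is to derive both assertions directly from Corollary~\ref{cor:coordinate-global-convergence}, with no further dynamical analysis. Fix \(p\in\mathcal E_3^{\mathrm{nd}}\). By that corollary the sequence \(T^k(p)\) converges in \(\mathbb R^3\) to some point \(p^*\) in the seven-point set \eqref{eq:complete-fixed-point-set}. Since a convergent sequence has exactly one accumulation point, every subsequence \(T^{k_j}(p)\) with \(k_j\to\infty\) also converges to \(p^*\). Hence \(\omega(p)=\{p^*\}\). Note that Definition~\ref{def:omega-limit-set} takes accumulation points in the compact set \(\mathcal E_3\), so we must check that \((1,1,1)\) is not a limit. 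This holds because \(p^*\) is one of the seven listed fixed points, none of which is \((1,1,1)\); so the excluded vertex is not approached asymptotically, and \(\omega(p)\) is a singleton fixed point of \(T\).

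For the periodic-orbit statement, suppose \(p\) has prime period \(m\ge2\), so \(T^m(p)=p\) and the points \(p,T(p),\dots,T^{m-1}(p)\) are pairwise distinct. Then \(\omega(p)\) contains the entire periodic orbit, since \(T^{jm+i}(p)=T^i(p)\) for every \(j\), and this orbit has \(m\ge2\) elements. That contradicts the singleton conclusion. Equivalently, the sequence \(T^k(p)\) is periodic and nonconstant, so it cannot converge.

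The same reasoning also excludes any other recurrent behaviour. If \(p\in\omega(p)\), then \(p=p^*\) is fixed, so the only recurrent points are the seven fixed points. I would add this as a one-line remark.

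There is no real obstacle here. The only point needing care is that \(\omega(p)\) is taken in the closure \(\mathcal E_3\) rather than in \(\mathcal E_3^{\mathrm{nd}}\), and the argument in the first paragraph handles it.
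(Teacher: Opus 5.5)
Your proposal is correct and follows the paper's route: the paper also derives this corollary as immediate from Corollary~\ref{cor:coordinate-global-convergence}. You have simply written out the routine details, namely uniqueness of limits in \(\mathbb R^3\), the fact that a nonconstant periodic sequence cannot converge, and that the limit is one of the seven fixed points rather than the excluded vertex \((1,1,1)\).
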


\begin{proof}
This is immediate from
Corollary~\ref{cor:coordinate-global-convergence}.
\end{proof}

For the measure-theoretic statements below, we return to the coordinate
representation
\[
p=(a,b,c)\in\mathcal E_3,
\]
because \(\lambda_3\) denotes three-dimensional Lebesgue measure in these
coordinates. Here \(\operatorname{int}\mathcal E_3\) and
\(\partial\mathcal E_3\) denote the Euclidean interior and boundary,
respectively, in \(\mathbb R^3\).

\begin{lemma}[Necessary symmetry for a non-patterned limit]
\label{lem:nonpatterned-symmetry}
Let
\[
p\in\operatorname{int}\mathcal E_3.
\]
If the orbit \(T^k(p)\) converges to one of the four non-patterned fixed
points, then
\[
T(p)\in\Sigma,
\]
where
\[
\Sigma
=
\left\{
(a,b,c)\in\mathcal E_3:
a=b\text{ or }a=c\text{ or }b=c
\right\}.
\]
\end{lemma}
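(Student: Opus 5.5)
We argue by contraposition, using the case analysis in the proof of Theorem~\ref{thm:global-convergence}. Let \(P_0=A(p)\) with \(p\in\operatorname{int}\mathcal E_3\) and set \(P_1=C(P_0)=A(T(p))\). If \(P_1\) has rank one, then it is a patterned fixed point and the orbit is constant from step one, so the limit is patterned. We may therefore assume that \(P_1\) has rank two and that \(T(p)\notin\Sigma\), that is, the three off-diagonal entries \(a_1,b_1,c_1\) of \(P_1\) are pairwise distinct. The task is to show that the limit is then patterned.

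The first step is to read off the consequences of \(T(p)\notin\Sigma\) for a kernel vector \(q_1\) of \(P_1\). We do not use interiority of \(p\) here; it is presumably needed only in later measure-theoretic statements, and it makes no difference if it is used just to ensure that \(P_1\) is well defined.

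\emph{No coordinate of \(q_1\) vanishes.} If one did, Lemma~\ref{lem:kernel-triangle-constraints} together with the analysis of Case~3 would put \(P_1\) on a permuted copy of the mixed curve \(A(-1,-t,t)\). The coordinates \((-1,-t,t)\) are pairwise distinct only when \(t\notin\{0,\pm1\}\), and for such \(t\) Case~3 shows that the orbit converges to a patterned point. (The degenerate value \(t=0\) would place \(T(p)\) at a mixed point, which already belongs to \(\Sigma\).) Hence either the zero-coordinate configuration already gives a patterned limit, or we may assume \(q_1q_2q_3\ne0\).

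\emph{No two coordinates of \(q_1\) are equal.} Suppose \(q_{i,1}=q_{j,1}\). The reconstruction formulas of Lemma~\ref{lem:kernel-reconstruction} are symmetric under exchanging \(q_i\) and \(q_j\). For example, when \(q_1=q_2\), formulas \eqref{eq:kernel-reconstruct-b} and \eqref{eq:kernel-reconstruct-c} give \(b=c\). Equal kernel coordinates therefore force two equal entries of \(P_1\), which would put \(T(p)\in\Sigma\). So the coordinates of \(q_1\) are pairwise distinct.

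With these two facts we enter Cases~1 and~2 of the proof of Theorem~\ref{thm:global-convergence}, normalized so that \(s(q_1)>0\). In Case~1 the relabeled coordinates satisfy \(q_{1,1}>q_{2,1}>q_{3,1}>0\). The equicorrelation branch (all coordinates equal) and the mixed branch (\(q_{1,1}=q_{2,1}\)) are excluded, leaving the branch \(q_{1,1}>q_{2,1}\), which was shown to converge to a patterned point. In Case~2 the coordinates satisfy \(q_{1,1}<0<q_{3,1}<q_{2,1}\), so \(r_1>1\). The branch \(r_1=1\), the only one leading to a mixed point, is excluded, and the branch \(r_1>1\) converges to a patterned point. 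In every case the limit is patterned. Contrapositively, convergence to a non-patterned point forces \(T(p)\in\Sigma\).

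The main obstacle is bookkeeping rather than analysis. We must check that the case split of Theorem~\ref{thm:global-convergence} lists \emph{every} route to a non-patterned limit and that each such route requires an equality among kernel coordinates (or a vanishing coordinate with \(t=0\)) at the first rank-two iterate \(P_1\). We must also confirm that the relabeling permutations carry \(\Sigma\) to itself. That holds because \(\Sigma\) is \(S_3\)-invariant: it is defined by equalities between coordinates, and the \(S_3\)-action merely permutes coordinates.
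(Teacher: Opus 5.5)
Your proposal is correct and is essentially the paper's own argument run in contrapositive form. Both use the same ingredients: the rank-one case, the reconstruction formulas (equal kernel coordinates force two equal off-diagonal entries of $T(p)$), and the branch structure in the proof of Theorem~\ref{thm:global-convergence}, where only the equality loci and the zero-coordinate loci can lead to non-patterned limits.

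One small imprecision: a vanishing kernel coordinate can also occur with $q_i=-q_j$. That puts $P_1$ on a permuted copy of $A(1,t,t)$, not on the mixed curve. Your conclusion is unaffected, because then $s(q)=0$, so $P_2$ is already a patterned fixed point, and in any case $T(p)\in\Sigma$.
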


\begin{proof}
Set
\[
P_0=A(p),
\qquad
P_1=C(P_0)=A(T(p)).
\]
Since \(p\in\operatorname{int}\mathcal E_3\), the matrix \(P_0\) is positive
definite and therefore has rank three. Proposition~\ref{prop:rank-bound}
gives
\[
\operatorname{rank}P_1\le2.
\]
If \(\operatorname{rank}P_1=1\), then
Corollary~\ref{cor:rank-one-correlation-states} implies that \(P_1\) is a
nondegenerate sign-valued correlation matrix, hence one of the three
patterned fixed points. In that case the orbit cannot converge to a
non-patterned fixed point. Therefore a non-patterned limit requires
\[
\operatorname{rank}P_1=2.
\]

Let \(q=(q_1,q_2,q_3)^T\neq0\) span \(\ker P_1\). The rank-two case analysis
in the proof of Theorem~\ref{thm:global-convergence} shows that convergence
to a mixed fixed point can occur only on one of the invariant equality
loci
\[
q_1=q_2,\qquad q_1=q_3,\qquad q_2=q_3,
\]
including the corresponding zero-coordinate endpoint cases. When all three
kernel coordinates are nonzero, Lemma~\ref{lem:kernel-reconstruction}
gives
\[
q_1=q_2\Longrightarrow b_1=c_1,\qquad
q_1=q_3\Longrightarrow a_1=c_1,\qquad
q_2=q_3\Longrightarrow a_1=b_1,
\]
where
\[
T(p)=(a_1,b_1,c_1).
\]
The zero-coordinate mixed fixed points themselves also have two equal
off-diagonal coordinates. The same rank-two analysis shows that convergence
to the equicorrelation point is possible only when \(P_1\) is already the
equicorrelation fixed point, whose three off-diagonal coordinates are equal.
Consequently,
\[
T(p)\in\Sigma.
\]
\end{proof}

\begin{corollary}[Full-measure convergence to the patterned orbit]
\label{cor:full-measure-patterned-convergence}
For \(\lambda_3\)-almost every
\[
p=(a,b,c)\in\mathcal E_3^{\mathrm{nd}},
\]
the orbit \(T^k(p)\) converges to one of the three patterned fixed points.
\end{corollary}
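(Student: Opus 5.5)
The plan is to split $\mathcal E_3^{\mathrm{nd}}$ into its Euclidean boundary and its interior. I will show that the boundary is $\lambda_3$-null and that the bad set inside the interior lies in a finite union of zero sets of non-trivial real-analytic functions. By Corollary~\ref{cor:coordinate-global-convergence}, every orbit converges to one of the seven fixed points. So the exceptional set is exactly the set of $p$ whose orbit converges to one of the four non-patterned points, and it suffices to show that this set is null.

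\emph{Boundary.} The set $\mathcal E_3$ is a compact convex set with nonempty interior. The boundary of such a body is $\lambda_3$-null: it is a Lipschitz graph locally, or one can cover it by the shells $\mathcal E_3\setminus(p_0+(1-\varepsilon)(\mathcal E_3-p_0))$ around an interior point $p_0$, whose measures tend to zero. Hence $\lambda_3(\partial\mathcal E_3)=0$, and we may restrict to $p\in\operatorname{int}\mathcal E_3$. Note that $(1,1,1)\notin\operatorname{int}\mathcal E_3$.

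\emph{Interior.} By Lemma~\ref{lem:nonpatterned-symmetry}, an interior point with non-patterned limit satisfies $T(p)\in\Sigma$. Therefore
\[
p\in Z_{12}\cup Z_{13}\cup Z_{23},\qquad Z_{ij}=\{p\in\operatorname{int}\mathcal E_3: T_i(p)=T_j(p)\}.
\]
On $\operatorname{int}\mathcal E_3$ the radicands in \eqref{eq:T-components} are strictly positive, so each $F_{ij}=T_i-T_j$ is real analytic there. The interior is convex, hence connected. A real-analytic function on a connected open set that is not identically zero has a zero set of Lebesgue measure zero; this is the standard fact, provable by induction on dimension with Fubini. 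So it remains to show $F_{ij}\not\equiv0$.

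\emph{Non-vanishing of $F_{ij}$.} This is the main point. I would argue by continuity up to patterned fixed points. The coordinate formulas are continuous on a neighbourhood of each patterned point, and these points lie in the closure of the interior. If $F_{12}\equiv0$ on the interior, continuity would give $T_1=T_2$ at FP1 $=(1,-1,-1)$. But $T(\mathrm{FP1})=(1,-1,-1)$, so $T_1=1\neq-1=T_2$, a contradiction. The same point gives $T_1\neq T_3$. For $F_{23}$, use FP3 $=(-1,1,-1)$, where $T_2=1\neq-1=T_3$.

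\emph{Conclusion.} The exceptional set is contained in $\partial\mathcal E_3\cup Z_{12}\cup Z_{13}\cup Z_{23}$, which is $\lambda_3$-null. Therefore almost every $p\in\mathcal E_3^{\mathrm{nd}}$ converges to a patterned fixed point.

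The only delicate step is the analytic-zero-set argument. One must check that $\operatorname{int}\mathcal E_3$ is a connected open set on which every $F_{ij}$ is genuinely analytic, and that the limits at the boundary vertices FP1 and FP3 are legitimate. Both hold because the denominators in \eqref{eq:T-components} are nonzero on a full neighbourhood of these vertices.
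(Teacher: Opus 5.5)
Your proposal is correct and follows the paper's proof: reduce to the interior via Lemma~\ref{lem:nonpatterned-symmetry}, observe that the zero sets of the real-analytic differences \(T_i-T_j\) on the connected set \(\operatorname{int}\mathcal E_3\) are \(\lambda_3\)-null, and discard the null boundary. The differences are minor and both versions are valid. You certify \(T_i-T_j\not\equiv0\) by continuity up to the patterned vertices FP1 and FP3, rather than by the paper's explicit evaluation at the interior point \((\frac12,\frac14,0)\). You also prove \(\lambda_3(\partial\mathcal E_3)=0\) by a convexity argument rather than via the zero set of \(1+2abc-a^2-b^2-c^2\).
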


\begin{proof}
Let
\[
E_{\mathrm{np}}
=
\left\{
p\in\mathcal E_3^{\mathrm{nd}}:
T^k(p)\text{ converges to a non-patterned fixed point}
\right\}.
\]
By Lemma~\ref{lem:nonpatterned-symmetry},
\[
E_{\mathrm{np}}\cap\operatorname{int}\mathcal E_3
\subseteq
T^{-1}(\Sigma).
\]

On the open convex set \(\operatorname{int}\mathcal E_3\), the coordinate
functions \(T_1,T_2,T_3\) are real analytic. Define
\[
\begin{aligned}
F(a,b,c)
={}&
\bigl(T_1(a,b,c)-T_2(a,b,c)\bigr)\\
&\times
\bigl(T_1(a,b,c)-T_3(a,b,c)\bigr)\\
&\times
\bigl(T_2(a,b,c)-T_3(a,b,c)\bigr).
\end{aligned}
\]
By the definition of \(\Sigma\),
\[
T^{-1}(\Sigma)\cap\operatorname{int}\mathcal E_3
=
\left\{
p\in\operatorname{int}\mathcal E_3:F(p)=0
\right\}.
\]

The function \(F\) is not identically zero. At
\[
(a,b,c)=\left(\frac12,\frac14,0\right),
\]
which lies in \(\operatorname{int}\mathcal E_3\), direct substitution gives
\[
T_1=\frac{\sqrt{21}}{14},
\qquad
T_2=-\frac{11\sqrt{91}}{182},
\qquad
T_3=-\frac{2\sqrt{39}}{13},
\]
and these values are pairwise distinct. Since
\(\operatorname{int}\mathcal E_3\) is connected, the zero set of the
nonzero real-analytic function \(F\) has three-dimensional Lebesgue measure
zero; see, for example, \cite{mityagin2020}. Hence
\[
\lambda_3\!\left(
E_{\mathrm{np}}\cap\operatorname{int}\mathcal E_3
\right)=0.
\]

The boundary \(\partial\mathcal E_3\) is contained in the zero set of the
nonzero polynomial
\[
1+2abc-a^2-b^2-c^2,
\]
so
\[
\lambda_3(\partial\mathcal E_3)=0.
\]
Therefore
\[
\lambda_3(E_{\mathrm{np}})=0.
\]
By Corollary~\ref{cor:coordinate-global-convergence}, every admissible
coordinate orbit converges to one of the seven fixed points. Thus, outside
the null set \(E_{\mathrm{np}}\), the limit must be one of the three
patterned fixed points.
\end{proof}
\begin{remark}[Transient step-size growth and global convergence]
\label{rem:transient-step-growth}
Global convergence does not imply monotone decay of the successive
step sizes
\[
\Delta_k
=
\|P_{k+1}-P_k\|_F.
\]
The invariant mixed boundary curve from
Lemma~\ref{lem:mixed-invariant-curve} gives an exact example.

Let
\[
P_k=A(-1,-t_k,t_k),
\qquad
t_{k+1}=g(t_k),
\]
where
\[
g(t)
=
\frac{4t}{\sqrt{3t^4+10t^2+3}}.
\]
Since only the two off-diagonal coordinates \(-t_k\) and \(t_k\)
vary along this curve, with each occurring twice in the symmetric
matrix,
\[
\Delta_k
=
2|t_{k+1}-t_k|.
\]
Consequently,
\[
\frac{\Delta_{k+1}}{\Delta_k}
=
\frac{|g(g(t_k))-g(t_k)|}
{|g(t_k)-t_k|}
\]
whenever
\[
0<|t_k|<1.
\]

At the mixed fixed point,
\[
g(0)=0,
\qquad
g'(0)=\frac{4}{\sqrt3}>1.
\]
Set
\[
\lambda=\frac{4}{\sqrt3}.
\]
Then
\[
g(t)=\lambda t+o(t)
\qquad
\text{as }t\to0,
\]
and hence
\[
g(g(t))-g(t)
=
\lambda(\lambda-1)t+o(t),
\]
whereas
\[
g(t)-t
=
(\lambda-1)t+o(t).
\]
Therefore
\begin{equation}
\label{eq:mixed-step-growth-limit}
\lim_{t\to0}
\frac{|g(g(t))-g(t)|}
{|g(t)-t|}
=
\frac{4}{\sqrt3}>1.
\end{equation}
Thus sufficiently small nonzero perturbations of the mixed fixed point
exhibit transient growth of successive Frobenius step sizes.

This behavior is fully compatible with
Theorem~\ref{thm:global-convergence}. Along the same invariant curve,
every trajectory with
\[
0<|t_0|<1
\]
ultimately converges to one of the patterned endpoints. Moreover,
\[
g'(1)=g'(-1)=0,
\]
so the curve exhibits strong contraction near those patterned fixed
points.

Thus the three-dimensional theory provides an exact mechanism by which
transient step-size overshoots can occur within a globally convergent
orbit: local expansion near an unstable fixed point may be followed by
strong contraction toward a stable patterned fixed point. This
observation neither quantifies the prevalence of such overshoots under
a sampling law nor asserts that the same mechanism explains all
overshoots observed numerically in higher dimensions.
\end{remark}

\section{Basin Geometry and Computational Illustration}\label{sec:basins}

For a patterned fixed point \(p_\ast\), define
\[
\mathcal B(p_\ast)
=
\left\{
p\in\mathcal E_3^{\mathrm{nd}}:
T^k(p)\to p_\ast
\right\}.
\]
For \(E\subseteq\mathcal E_3\) and \(\sigma\in S_3\), write
\[
\sigma\cdot E
=
\{\sigma\cdot p:p\in E\}
\]
for the induced action on subsets. By forward invariance, no separate
condition ensuring existence of future iterates is needed.

\begin{proposition}[Openness, symmetry, and exact basin measures]
\label{prop:basin-symmetry}
For every patterned fixed point \(p_\ast\), the basin
\(\mathcal B(p_\ast)\) is relatively open in
\(\mathcal E_3^{\mathrm{nd}}\) and is therefore Lebesgue measurable.
For every \(\sigma\in S_3\),
\begin{equation}\label{eq:basin-equivariance}
\sigma\cdot\mathcal B(p_\ast)=\mathcal B(\sigma\cdot p_\ast).
\end{equation}
The three patterned basins have equal Lebesgue measure. Moreover,
\begin{equation}\label{eq:exact-one-third-basin-measure}
\lambda_3(\mathcal B(p_\ast))
=
\frac13\lambda_3(\mathcal E_3)
\end{equation}
for each patterned fixed point \(p_\ast\).
\end{proposition}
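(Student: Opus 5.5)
The proof has three parts: openness, equivariance, and then the measure computation from symmetry plus Corollary~\ref{cor:full-measure-patterned-convergence}.

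\textbf{Openness.} Fix a patterned fixed point \(p_\ast\). By Theorem~\ref{thm:stability} it is locally asymptotically stable relative to \(\mathcal E_3^{\mathrm{nd}}\). Hence there is a relative neighborhood \(W\) of \(p_\ast\), which we may shrink to be relatively open, such that every orbit starting in \(W\) converges to \(p_\ast\). A point \(p\) lies in \(\mathcal B(p_\ast)\) if and only if \(T^k(p)\in W\) for some \(k\). The forward direction holds because \(T^k(p)\to p_\ast\), so the orbit eventually enters \(W\). The reverse direction holds because the orbit of \(T^k(p)\) then converges to \(p_\ast\). Therefore
\[
\mathcal B(p_\ast)=\bigcup_{k\ge0}T^{-k}(W).
\]
The map \(T\) is continuous on \(\mathcal E_3^{\mathrm{nd}}\) by Remark~\ref{rem:nondegenerate-domain}, and it maps \(\mathcal E_3^{\mathrm{nd}}\) into itself. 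So each preimage \(T^{-k}(W)\) is relatively open, and so is their union.

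Relative openness gives measurability. The set \(\mathcal E_3^{\mathrm{nd}}\) is the compact set \(\mathcal E_3\) with one point removed, hence Borel. A relatively open subset of it is the intersection of an open set in \(\mathbb R^3\) with this Borel set, hence Borel.

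\textbf{Equivariance.} Corollary~\ref{cor:T-equivariant} gives, by induction, \(T^k(\sigma\cdot p)=\sigma\cdot T^k(p)\) for all \(k\). The action \(p\mapsto K_\sigma p\) is a linear homeomorphism that preserves \(\mathcal E_3^{\mathrm{nd}}\). Consequently \(T^k(p)\to p_\ast\) if and only if \(T^k(\sigma\cdot p)\to\sigma\cdot p_\ast\), which is \eqref{eq:basin-equivariance}. Each \(K_\sigma\) is a permutation matrix, so \(|\det K_\sigma|=1\) and \(K_\sigma\) preserves \(\lambda_3\). The group \(S_3\) acts transitively on \(\mathcal O_{\mathrm{pat}}\), so the three patterned basins have equal measure.

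\textbf{Exact one-third measure.} Three facts combine here.
\begin{itemize}
\item The three patterned basins are pairwise disjoint, since limits are unique.
\item By Corollary~\ref{cor:full-measure-patterned-convergence}, their union has full \(\lambda_3\)-measure in \(\mathcal E_3^{\mathrm{nd}}\).
\item \(\lambda_3(\mathcal E_3^{\mathrm{nd}})=\lambda_3(\mathcal E_3)\), because the two sets differ by the single point \((1,1,1)\).
\end{itemize}
Hence the three equal measures sum to \(\lambda_3(\mathcal E_3)\), and each equals \(\tfrac13\lambda_3(\mathcal E_3)\).

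No step is deep. The point most likely to need care is the openness argument. Local asymptotic stability as defined only provides some relative neighborhood of attraction, so we must take \(W\) relatively open (its relative interior, which still contains \(p_\ast\)). We also need \(T^{-k}(W)\) to be computed inside \(\mathcal E_3^{\mathrm{nd}}\), which is where forward invariance is used.
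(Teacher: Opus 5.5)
Your proposal is correct and follows the paper's proof essentially step for step. It obtains openness from preimages under iterates of $T$ of an attracting relative neighborhood supplied by Theorem~\ref{thm:stability}, obtains equivariance and equal measures from Corollary~\ref{cor:T-equivariant} and the $\lambda_3$-preserving permutation action, and obtains the one-third value from disjointness, Corollary~\ref{cor:full-measure-patterned-convergence}, and $\lambda_3(\{(1,1,1)\})=0$; your explicit passage to the relative interior of the attracting neighborhood is a small point of care that the paper leaves implicit.
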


\begin{proof}
Let \(p_\ast\) be a patterned fixed point. By Theorem~\ref{thm:stability},
there is a relative neighborhood \(U\) of \(p_\ast\) such that every orbit
starting in \(U\) converges to \(p_\ast\). If
\(p\in\mathcal B(p_\ast)\), then
\[
T^N(p)\in U
\]
for some \(N\). Since \(T\) is a continuous self-map of
\(\mathcal E_3^{\mathrm{nd}}\), the set
\[
(T^N)^{-1}(U)
\]
is a relatively open neighborhood of \(p\) contained in
\(\mathcal B(p_\ast)\). Thus \(\mathcal B(p_\ast)\) is relatively open in
\(\mathcal E_3^{\mathrm{nd}}\). Since
\[
\mathcal E_3^{\mathrm{nd}}
=
\mathcal E_3\setminus\{(1,1,1)\}
\]
is a Borel subset of \(\mathbb R^3\), every relatively open subset of
\(\mathcal E_3^{\mathrm{nd}}\) is Borel and hence Lebesgue measurable.

Equivariance gives
\[
T^k(\sigma\cdot p)=\sigma\cdot T^k(p).
\]
Therefore
\[
T^k(p)\to p_\ast
\quad\Longleftrightarrow\quad
T^k(\sigma\cdot p)\to\sigma\cdot p_\ast,
\]
which proves equation~\eqref{eq:basin-equivariance}. Coordinate
permutations preserve \(\lambda_3\) and act transitively on the three
patterned fixed points, so the three patterned basins have equal measure.

The three basins are pairwise disjoint, and
Corollary~\ref{cor:full-measure-patterned-convergence} states that their
union has full measure in \(\mathcal E_3^{\mathrm{nd}}\). Since the excluded
point \((1,1,1)\) is a singleton,
\[
\lambda_3(\mathcal E_3^{\mathrm{nd}})
=
\lambda_3(\mathcal E_3).
\]
Thus, if \(m\) denotes the common measure of the three patterned basins,
\[
3m=\lambda_3(\mathcal E_3),
\]
and equation~\eqref{eq:exact-one-third-basin-measure} follows.
\end{proof}

\subsection{Reproducible Computational Protocol}

The basin experiment was rerun with a deterministic implementation under
Python 3.12.0 using NumPy 1.26.4, SciPy 1.17.1, and Matplotlib 3.8.0.
Using NumPy's \texttt{default\_rng} generator with seed \(20260714\),
the code generated \(80{,}000\) independent triples uniformly in
\([-1,1]^3\).

A triple was retained if it satisfied
\[
1+2abc-a^2-b^2-c^2
\ge
-10^{-10}.
\]
The small negative allowance is solely a floating-point tolerance for
the exact elliptope condition in
equation~\eqref{eq:elliptope-e3}. For the reported seeded sample, no retained
point relied on this tolerance: all \(49{,}164\) retained triples satisfied
the determinant condition strictly, and the minimum determinant among them
was
\[
3.7090516\times10^{-5}>0.
\]

For every admissible initial matrix \(A_0\), the iteration
\[
A_{k+1}
=
C(A_k)
\]
was continued until
\[
\|A_{k+1}-A_k\|_{\max}
<
10^{-10},
\]
or until \(1000\) iterations.

No artificial lower bound was imposed on the centered-row norms. The exact forward invariance theorem
proves that an admissible trajectory cannot become
Pearson-degenerate at a finite iterate. The implementation nevertheless
retained a threshold of \(10^{-14}\) as a finite-precision diagnostic; a
centered-row norm below that value would have been recorded as an
undefined-map event. After the stopping
criterion was met, the final triple was compared with all seven fixed
points using absolute and relative tolerances \(10^{-6}\). The
classification procedure did not use the stability labels.

The cube filter retained \(49{,}164\) admissible initial conditions and
rejected \(30{,}836\) points. Every admissible trajectory matched one of
the seven listed fixed points. There were no unmatched,
maximum-iteration, or undefined-map cases. The largest iteration count
was \(19\), and the smallest centered-row norm encountered over all
computed steps was approximately
\[
1.45\times10^{-2}.
\]

A stricter validation run used the identical \(80{,}000\) sampled
triples and changed only:

\begin{itemize}
\item the stopping tolerance to \(10^{-12}\);
\item the matching tolerance to \(10^{-7}\);
\item the iteration limit to \(5000\).
\end{itemize}

A trajectory-by-trajectory comparison produced zero classification
mismatches across all \(49{,}164\) admissible initial conditions. The
validation run again produced no unmatched, maximum-iteration, or
undefined-map cases.

\subsection{Results}

Table~\ref{tab:basins} reports the observed basin frequencies for the
seeded experiment. Only the three patterned fixed points occurred as
numerical limits; none of the sampled trajectories was classified as
converging to a mixed or equicorrelation fixed point.

\begin{table}[htbp]
\centering
\caption{Observed basin frequencies for the \(49{,}164\) admissible
initial conditions in the seeded experiment. Each patterned proportion
has a 95\% binomial normal-approximation confidence-interval half-width
of approximately \(0.42\) percentage points.}
\label{tab:basins}
\begin{tabular}{cllrr}
\toprule
Label & Family & Coordinates \((a,b,c)\) & Count & Frequency (\%)\\
\midrule
FP1 & Patterned
& \((1,-1,-1)\)
& \(16{,}342\) & \(33.24\)\\
FP2 & Mixed
& \((0,0,-1)\)
& \(0\) & \(0.00\)\\
FP3 & Patterned
& \((-1,1,-1)\)
& \(16{,}428\) & \(33.41\)\\
FP4 & Mixed
& \((0,-1,0)\)
& \(0\) & \(0.00\)\\
FP5 & Equicorrelation
& \(\left(-\frac12,-\frac12,-\frac12\right)\)
& \(0\) & \(0.00\)\\
FP6 & Patterned
& \((-1,-1,1)\)
& \(16{,}394\) & \(33.35\)\\
FP7 & Mixed
& \((-1,0,0)\)
& \(0\) & \(0.00\)\\
\bottomrule
\end{tabular}
\end{table}

For each patterned basin, let \(\widehat p\) denote its observed sample
proportion. The corresponding approximate \(95\%\) confidence interval is
computed as
\[
\widehat p
\pm
1.96
\sqrt{
\frac{
\widehat p(1-\widehat p)
}{N}
},
\qquad
N=49{,}164.
\]

No sampled trajectory was numerically classified as converging to a mixed
point or to the equicorrelation point. This is consistent with
Theorem~\ref{thm:global-convergence} and
Corollary~\ref{cor:full-measure-patterned-convergence}: the four
non-patterned fixed points have only a Lebesgue-null set of initial conditions
converging to them. Proposition~\ref{prop:basin-symmetry} proves that the three
patterned basins have exactly equal measure. The small differences among the
observed frequencies are therefore finite-sample fluctuations around the exact
one-third basin proportions.
Figure~\ref{fig:barchart} visualizes these frequencies, both for the
seven individual fixed points and for the comparison between the three
patterned basins and the four unstable fixed points.

\begin{figure}[!htbp]
\centering
\includegraphics[
width=\textwidth,
height=0.9\textheight,
keepaspectratio
]{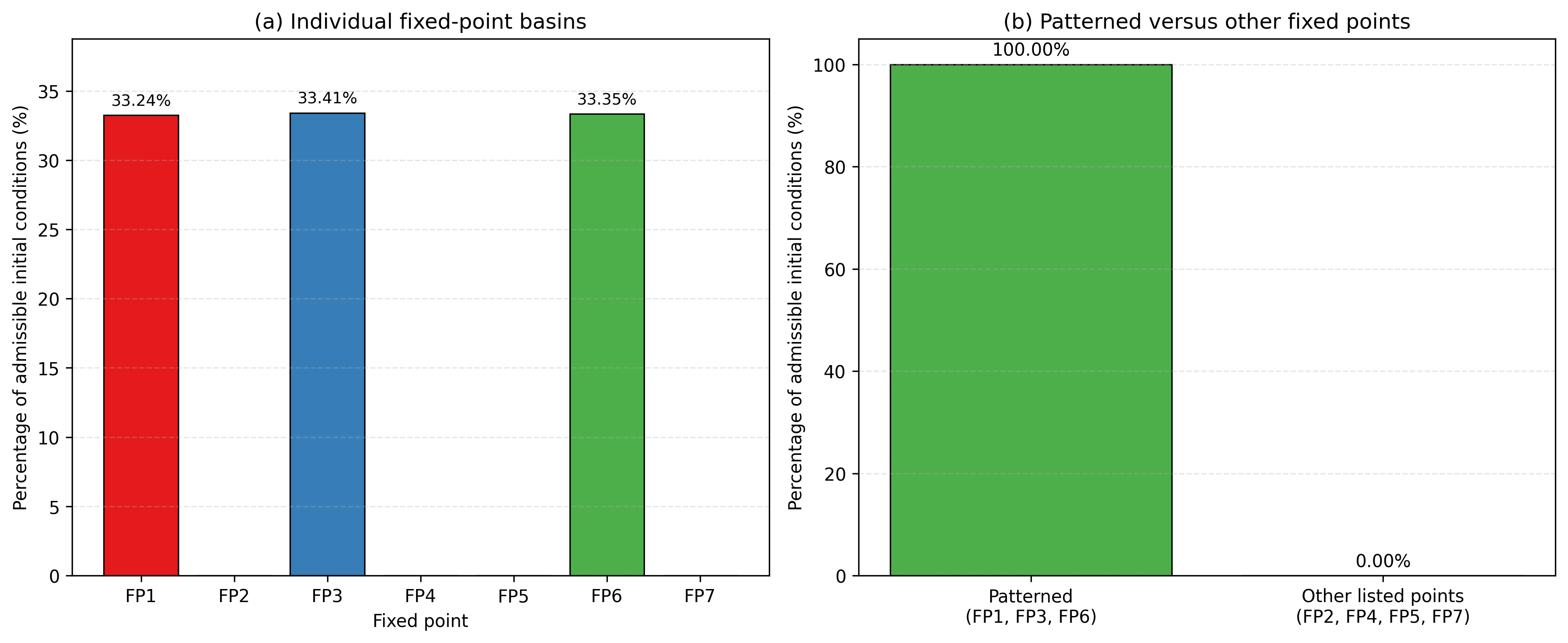}
\caption{Observed basin frequencies in the seeded experiment.
Panel~(a) reports the percentage associated with each of the seven
fixed points. Panel~(b) compares the combined patterned basins with the
four other fixed points. Every one of the \(49{,}164\) admissible
sampled trajectories was matched to FP1, FP3, or FP6.}
\label{fig:barchart}
\end{figure}

Figure~\ref{fig:slices} shows two-dimensional views of the basins in
thin bands centered at selected values of the initial coordinate
\(c_0\).

\begin{figure}[!htbp]
\centering
\includegraphics[
width=\textwidth,
height=0.70\textheight,
keepaspectratio
]{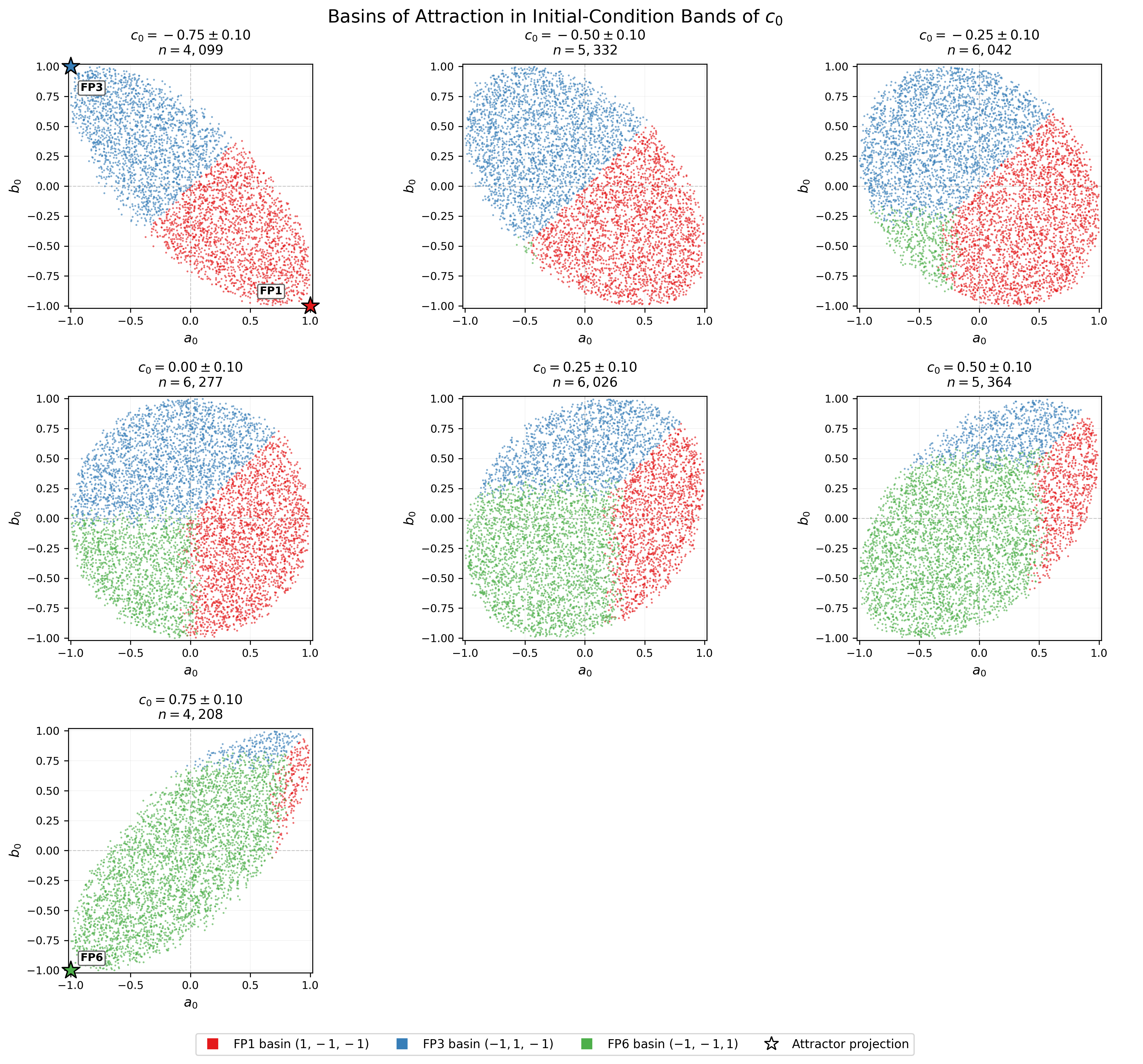}
\caption{Two-dimensional views of the observed basins. A panel centered
at \(c_*\) contains admissible initial conditions satisfying
\(\lvert c_0-c_*\rvert<0.10\), so the panels represent thin bands
rather than exact planar slices. The displayed \(n\) is the number of
initial conditions retained in that band. Colors identify the numerical
limit: red for FP1, blue for FP3, and green for FP6. Star markers show
the projections of the patterned fixed points in the displayed bands
nearest their actual \(c\)-coordinates, namely \(c=-1\) for FP1 and
FP3 and \(c=1\) for FP6. The final panel contains the legend.}
\label{fig:slices}
\end{figure}

The slice views in Figure~\ref{fig:slices} show how the three observed basins
are distributed across different bands of the initial coordinate \(c_0\).
Although the visible basin geometry changes from one band to another, only
FP1, FP3, and FP6 occur as numerical limits. The computation is therefore
consistent with the exact global theory: every admissible orbit converges to a
fixed point, while almost every initial condition converges to the patterned
orbit. The numerical experiment is not used in the proof of global
convergence; it provides an independent reproducibility check and a geometric
visualization of the three patterned basins comprising the full-measure union.

\section{Discussion}\label{sec:discussion}

The dimension-three dynamics is now completely determined at the level of
forward existence and asymptotic convergence. Theorem~\ref{thm:forward-invariance}
excludes finite-time Pearson degeneracy on the natural state space, while
Theorem~\ref{thm:global-convergence} proves that every admissible
\(3\times3\) trajectory converges to a fixed point. Combined with
Theorem~\ref{thm:complete-classification}, this gives a complete list of all
possible limits: three rank-one patterned points, three rank-two mixed points,
and one rank-two equicorrelation point.

The key global mechanism is distinct from the fixed-point classification.
The classification assumes stationarity and converts equality of two Gram
factorizations into an eigenline problem for a two-dimensional linear
operator. The convergence proof instead follows the one-dimensional kernel of
a general rank-two iterate. The exact projective update
\[
[q^+]=[D(P)q],
\]
together with
\eqref{eq:kernel-order-identity}, converts the boundary dynamics into
projectively monotone evolution. The resulting order structure rules out
nontrivial periodic or recurrent limit sets and forces each trajectory toward
a fixed configuration.

The local stability classification remains essential for understanding which
limits are dynamically typical. The three patterned points are locally
asymptotically stable. The mixed and equicorrelation points are Lyapunov
unstable relative to the natural domain. The global kernel analysis shows
more: convergence to these four unstable points is confined to a
Lebesgue-null exceptional set. Corollary~\ref{cor:full-measure-patterned-convergence}
therefore establishes full-measure convergence to the patterned orbit as a theorem.

Permutation equivariance then gives an exact measure-theoretic statement.
The three patterned basins are relatively open, measurable, and congruent
under coordinate permutations. Their union has full Lebesgue measure, so each
basin occupies exactly one third of the elliptope in the three-dimensional
Lebesgue measure. The numerical frequencies in Table~\ref{tab:basins} are
therefore interpreted as sampling fluctuations around an exact symmetry law,
not as estimates of an unknown basin ratio.

The general-dimensional structural results remain separate from the
three-dimensional convergence theorem. For every \(n\ge2\), the exact rank
identity
\[
\operatorname{rank}C(A)=\operatorname{rank}(AH_n)
\]
places Chen's rank reduction in an explicit row-wise Gram framework. The
all-ones matrix is the unique Pearson-degenerate point of the elliptope, and
the nondegenerate elliptope is forward invariant. Proposition~\ref{prop:sign-fixed-points} also gives the complete nondegenerate
sign-valued fixed-point family, of cardinality \(2^{n-1}-1\).

What is special in dimension three is that one Pearson step forces rank at
most two and hence leaves a one-dimensional kernel whose projective direction
parametrizes the generic boundary state. For \(n\ge4\), the rank after one
step can be as large as \(n-1\), the kernel no longer parametrizes the state,
and the two-dimensional Cayley--Hamilton identity used in
Proposition~\ref{prop:projective-kernel-dynamics} has no direct analogue with the same
ordering consequence. Thus the present proof does not establish global
convergence in higher dimensions.

The reproducible computation remains useful even after the analytical
convergence theorem. It verifies the implementation against the exact theory,
illustrates the geometry of the three patterned basins, and records the short
transient lengths observed under uniform sampling. It is no longer required
to support a conjectural global statement.

\section{Open Problems}
\label{sec:open-problems}

The global convergence problem is resolved here for \(n=3\), but several
questions remain open in higher dimensions and in the finer geometry of the
exceptional sets.

\begin{problem}[Higher-dimensional fixed-point classification]
For each \(n\ge4\), determine the complete fixed-point set
\[
\operatorname{Fix}(C)
=
\{A\in\mathcal C_n^{\mathrm{nd}}:C(A)=A\}.
\]
Proposition~\ref{prop:rank-bound} shows that every fixed point satisfies
\(\operatorname{rank}A\le n-1\), and
Proposition~\ref{prop:sign-fixed-points} classifies all nondegenerate
sign-valued fixed points. It remains to determine whether additional
higher-rank fixed points exist and, if so, to classify them.
\end{problem}

\begin{problem}[Global convergence in higher dimensions]
For \(n\ge4\), does every orbit in \(\mathcal C_n^{\mathrm{nd}}\) converge?
If not, characterize the possible nontrivial \(\omega\)-limit sets and the
initial conditions that generate them.
\end{problem}

\begin{problem}[Stability in higher dimensions]
Which higher-dimensional fixed-point families are relatively Lyapunov stable
or locally asymptotically stable? In particular, can a fixed point of rank
greater than one be locally asymptotically stable for some \(n\ge4\)?
\end{problem}

\begin{problem}[Exact exceptional sets in dimension three]
Determine the stable sets of the three mixed fixed points and of the
equicorrelation fixed point explicitly, and describe how these exceptional
stable sets sit inside the boundaries of the three open patterned basins.
\end{problem}

\begin{problem}[Basin boundaries]
Give an explicit geometric or algebraic description of the boundaries of the
three patterned basins in \(\mathcal E_3\). Determine which stable manifolds or
invariant curves of the unstable fixed points form part of these boundaries.
\end{problem}

\begin{problem}[Asymptotic degeneracy in higher dimensions]
Theorem~\ref{thm:forward-invariance} excludes finite-time arrival at the
all-ones matrix in every dimension. Can a higher-dimensional forward orbit
approach \(\mathbf1\mathbf1^T\) asymptotically along a subsequence, or can
this also be excluded analytically?
\end{problem}

\section*{Declarations}

\paragraph{Ethics approval and consent to participate.}
Not applicable.

\paragraph{Consent for publication.}
Not applicable.

\paragraph{Availability of data and materials.}
The complete reproducibility package, including the source code, numerical
summaries, raw trajectory classifications, validation outputs, and
figure-generation scripts used in this work, is archived on Zenodo as
Version~2.0.0 \cite{alhajjhassan2026reproducibility}. The Zenodo record is
publicly accessible, while the archived files are restricted during manuscript
review and will be made publicly available upon publication of the associated
manuscript. Access to the restricted files may be provided to editors and
reviewers upon request. The source code is also maintained in the private
GitHub repository at
\url{https://github.com/IshrakAlhajjHassan/fixedpoints}
and will be made publicly available upon publication.

\paragraph{Competing interests.}
The author declares no competing interests.

\paragraph{Funding.}
No external funding was received for this study.

\paragraph{Author contributions.}
The sole author was responsible for the conception, mathematical
analysis, computational implementation, validation, visualization, and
preparation of the manuscript.

\paragraph{Acknowledgments.}
The author gratefully acknowledges Prof.~Pasha Zusmanovich for his
valuable guidance, careful comments, and constructive discussions
throughout the development of this work.

\end{document}